\documentclass[11pt,a4paper]{amsart}

\usepackage{graphicx}
\usepackage{enumerate}
\usepackage{bbm}
\usepackage{amsmath, amssymb, amsthm, amscd}
\usepackage{mathtools}
\usepackage[dvipsnames]{xcolor}
\usepackage{caption}
\usepackage{subcaption}
\usepackage{multirow}
\usepackage{extarrows}
\usepackage{stmaryrd}
\usepackage{tikz-cd}
\usepackage[pdfencoding=auto, psdextra, hidelinks]{hyperref}
\usepackage{cleveref}
\usepackage{geometry}  
\usepackage{orcidlink}

\theoremstyle{plain}
\newtheorem{theorem}{Theorem}[section]
\newtheorem{lemma}[theorem]{Lemma}
\newtheorem{corollary}[theorem]{Corollary}
\newtheorem{proposition}[theorem]{Proposition}
\newtheorem{hypothesis}[theorem]{Hypothesis}

\theoremstyle{definition}
\newtheorem{definition}[theorem]{Definition}

\theoremstyle{remark}

\numberwithin{equation}{section}

\def\softd{{\leavevmode\setbox1=\hbox{d}%
              \hbox to 1.05\wd1{d\kern-0.4ex{\char039}\hss}}}

\def\bold#1{\mbox{\boldmath $#1$}}
\newcommand{\uu}[1]{\bold{#1}}

\newcommand{\ldblbrace}{\{\mskip-5mu\{}
\newcommand{\rdblbrace}{\}\mskip-5mu\}}
\newcommand{\diff}[1]{\llbracket #1\rrbracket}
\newcommand{\avg}[1]{\ldblbrace #1\rdblbrace}

\newcommand{\modL}[1]{\left|#1\right|}
\newcommand{\paraL}[1]{\left(#1\right)}
\newcommand{\sumintK}[1]{\sum_{\sigma \in \mathcal{E}(K)} \frac{|\sigma|}{|K|}}
\newcommand{\sumintKK}[1]{\frac{1}{|K|} \sum_{\sigma \in \mathcal{E}(K)} }
\newcommand{\sumK}[1]{\sum_{K \in \mathcal{T}} |K|}
\newcommand{\sumintall}[1]{\sum_{\sigma \in \mathcal{E}} |\sigma|}
\newcommand{\sumintalla}[1]{\sum_{\sigma \in \mathcal{E}} }

\newcommand{\R}{\mathbb{R}}

\newcommand{\tdom}{(0,T)}
\newcommand{\odom}{\tdom\times\Omega}

\newcommand{\bu}{{\uu{u}}}

\newcommand{\du}{\delta\bu}
\newcommand{\tvrho}{\widetilde{\vrho}}
\newcommand{\tu}{\widetilde{\bu}}
\newcommand{\tv}{\widetilde{\bfv}}
\newcommand{\tpi}{\widetilde{\pi}}
\newcommand{\D}{\mathcal{D}} 
\newcommand{\E}{\mathcal{E}}

\newcommand{\veps}{\varepsilon}

\newcommand{\vrho}{\varrho}
\newcommand{\vth}{\vartheta}
\newcommand{\uphi}{\uu{\varphi}}
\newcommand{\epso}{{0,\varepsilon}}
\newcommand{\half}{\frac{1}{2}}
\newcommand{\dx}{\mathrm{d}x}
\newcommand{\dbx}{\mathrm{d}\uu{x}}

\newcommand{\dt}{\mathrm{d}t}

\newcommand{\lint}{\int\limits_0^\tau\!\!\!\int\limits_\Omega}

\newcommand{\T}{\mathcal{T}}
\newcommand{\nuk}{\uu{n}_{K, \sigma}}
\newcommand{\Lt}{\mathcal{L}_{\mathcal{T}}}

\newcommand{\absk}{\abs{K}}
\newcommand{\abssig}{\abs{\sigma}}
\newcommand{\dsig}{D_{\sigma}}
\newcommand{\divt}{\mathrm{div}_{\T}}
\newcommand{\gradt}{\nabla_{\T}}

\newcommand{\gradd}{\nabla_{\E}}
\newcommand{\rk}{\vrho_K}

\newcommand{\sk}{{\sigma,K}}

\newcommand{\sink}{\sigma\in\E(K)}
\newcommand{\delt}{\Delta t}

\newcommand{\bfw}{\uu{w}}
\newcommand{\bfv}{\boldsymbol{v}}

\newcommand{\norm}[1]{\left\lVert#1\right\rVert}
\newcommand{\abs}[1]{\lvert#1\rvert}

\newcommand{\Div}{\mathrm{div}_x}
\newcommand{\grad}{\nabla_x}
\newcommand{\Dt}{\partial_t}

\title[Asymptotic-preserving and energy-stable method for the Euler equations]{Error Analysis of an Asymptotic-Preserving, Energy-Stable Finite Volume Method for the Barotropic Euler Equations} 
\thanks{M. A. and \ M.\ L.-M.\ were supported by the DAAD DST (German-India) Project based personnel exchange programme: Development and analysis of higher-order structure-preserving numerical methods for hyperbolic balance laws}

\author[Anandan]{M. Anandan}
\address{Institut f{\"u}r Mathematik, Johannes Gutenberg-Universit{\"a}t Mainz, Staudingerweg 9, D-55128 Mainz, Germany}
\email{manandan@uni-mainz.de}
\thanks{M. A.\ was funded by the Gutenberg Research College of the University of Mainz.}

\author[Arun]{K.R.\ Arun}
\address{School of Mathematics, Indian Institute of Science Education
  and Research Thiruvananthapuram, Thiruvananthapuram 695551, India} 
\email{arun@iisertvm.ac.in, amoghk0720@iisertvm.ac.in}
\thanks{K.R.\ A.\ sincerely thanks the Anusandhan National Research Foundation (ANRF) for the support under the Advanced Research Grant (ARG), Grant No. ANRF/ARG/2025/007027/MS}

\author[Krishnamurthy]{A.\ Krishnamurthy}

\author[Luk\'{a}\v{c}ov\'{a}-Medvi\v{d}ov\'{a}]{M. Luk\'{a}\v{c}ov\'{a}-Medvi\v{d}ov\'{a}}
\address{Institut f{\"u}r Mathematik, Johannes Gutenberg-Universit{\"a}t Mainz, Staudingerweg 9, D-55128 Mainz, Germany\\
RMU Co-Affiliate Technical University Darmstadt, Germany}
\email{lukacova@uni-mainz.de}

\thanks{M.\ L.-M.\ gratefully acknowledges the support of DFG Project 5258 3336 funded within Focused Programme SPP 2410 ``Hyperbolic Balance Laws: Complexity, Scales and Randomness" and of the Mainz Institute of Multiscale Modeling.}

\date{\today}

\subjclass{65M08, 65M12, 76M12}

\keywords{Error estimates, convergence, low Mach number limit, asymptotic preservation, energy stability, Euler equations}

\begin{document}

\begin{abstract}
We propose and analyse an energy-stable and asymptotic-preserving finite volume scheme for the compressible Euler system. Using the relative energy framework, we establish rigorous error estimates that yield convergence of the numerical solutions in two distinct regimes. For a fixed Mach number $\veps>0$, we derive error estimates between the numerical solutions and a strong solution of the compressible Euler system that are uniform with respect to the discretisation parameters, ensuring convergence as the underlying mesh is refined. In the low Mach number regime, we analyse the error between the numerical solutions and a strong solution of the incompressible Euler system and obtain asymptotic error estimates that are uniform in $\veps$ and the discretisation parameters. These results imply convergence of the numerical solutions toward a strong solution of the incompressible Euler system as $\veps$, and the discretisation parameters simultaneously tend to zero. Numerical experiments are presented to validate the theoretical analysis.
\end{abstract}

\maketitle
\par \textbf{Keywords:} Error estimates, convergence, low Mach number limit, asymptotic preservation, energy stability, Euler equations. \\ 
\par \textbf{MSC 2010 Classification:} 65M08, 65M12, 76M12.

\section{Introduction}
\label{sec:intro}

Numerical simulation of the Euler system has been extensively investigated and constitutes a fundamental tool in computational mechanics and physics; see, e.g., the monographs \cite{GR21, Tor97}. Despite the effectiveness of numerical methods in applications, rigorous convergence results remain largely restricted to scalar conservation laws; see \cite{CCL94, Kuz76, Vil94}. More refined results are available in one space dimension for specific wave
configurations, including optimal convergence rates for monotone and viscosity schemes in the presence of finitely many shocks or rarefaction waves, as well as pointwise error estimates and bounds on the thickness of numerical layers; see, e.g., \cite{TT99, TT97, TZ97}.

For multidimensional nonlinear systems of hyperbolic conservation
laws, rigorous convergence results are considerably more limited. One available result in this direction is due to \cite{JR06}, where the authors established convergence rates for numerical solutions generated by a finite volume scheme towards classical solutions, under the assumption of uniform boundedness of the numerical solutions and their Sobolev seminorms. \color{black}More recently, using the relative energy framework, \cite{FLS23,LS23,LSY22} derived convergence rates for numerical solutions towards classical solutions under weaker assumptions than those imposed in \cite{JR06}. \color{black}In the present work, \color{black}we use the relative energy framework and \color{black}derive error estimates
between numerical and strong solutions, by adopting an asymptotic-preserving (AP) finite volume scheme for the scaled Euler
system, parameterised by the Mach number $\veps$. We also derive asymptotic error estimates that are uniform with respect to the
Mach number $\veps$ as well as the discretisation parameters. It is
well known that, as $\veps \to 0$, solutions of the compressible Euler
system converge toward solutions of the incompressible Euler system;
see, e.g., \cite{KM82}. The limit $\veps \to 0$ is singular in
nature, since the purely hyperbolic compressible Euler system
converges to the mixed hyperbolic–elliptic incompressible Euler
system. Accurately capturing this regime transition at the discrete
level calls for numerical schemes that remain robust in this
singular limit. Here, the framework of AP schemes provides a natural and
effective approach.

\subsection{Background}
\subsubsection{AP schemes}
The AP framework was first introduced by Jin in the
context of kinetic models \cite{Jin99}\color{black}, and later extended to hyperbolic systems \cite{Jin2012}. \color{black}Briefly, a numerical scheme for a
parameterised system (here, the compressible Euler system) is said to be
AP if, in the limit $\veps \to 0$, it reduces to a consistent
discretisation of the limiting system (here,
 the incompressible Euler system). Moreover, the discretisation
parameters should remain uniform with respect to $\veps$. A vast body
of literature has been devoted to the development and analysis of AP
schemes; see, e.g., \cite{ADP20, AL25, ALR25, AGK23, AKL24, BAL+14, BPR13, BQRX22, DT11, DP13, HLS21, NBA+14, PR05} and the references therein.

\subsubsection{Relative energy}
The main analytical tool employed in this work is the relative energy functional, originally introduced by Dafermos \cite{Daf00}. \color{black}This framework has been widely used to compare two different solutions and plays an important role in the study of weak–strong uniqueness and singular limits of dynamical systems. We refer to the monograph by Feireisl and Novotn\'y \cite{FN17}, where the relative energy method is used to rigorously analyse various singular limits of the compressible Euler and Navier–Stokes systems. More recently, the relative energy method has been successfully applied to the convergence analysis of numerical approximations of systems such as compressible fluid equations and viscoelastic phase separation models, yielding error estimates that are uniform with respect to the discretisation parameters; see \cite{BEHL25, FLS23, GHM+15, LS23, LSY22} and the references therein. The relative energy method has also been employed in the convergence analysis of numerical solutions in the low Mach number limit of the Navier–Stokes equations \cite{FLN+18}. \color{black}


\subsection{Main contributions}
\begin{itemize}
\item We propose and analyse an asymptotic-preserving and energy-stable finite
volume scheme  for the compressible Euler
equations. To establish the stability of the scheme, we employ stabilisation
techniques following \cite{AGK23, PV16}, whereby an appropriate velocity shift is introduced in the convective fluxes of the mass and
momentum equations. 
    \item  We derive the error
estimates in two distinct regimes by applying the relative energy, a system-specific metric.
 \begin{itemize}
    \item For a fixed Mach number
$\veps > 0$, we consider the error between the numerical solutions and
a strong solution of the compressible Euler system, and derive the error
estimates that are uniform with respect to the discretisation
parameters $h, \delt$. This result yields convergence of the numerical solutions
towards the strong solution of the compressible Euler system as $h, \delt \to 0$.
\item We derive error estimates between
the numerical solutions and a strong solution of the incompressible
Euler system. These estimates are uniform in Mach number $\veps$ and the
discretisation parameters $h, \delt$. This result implies convergence of the numerical
solutions towards a strong solution of the incompressible Euler system
as $\veps,h, \delt \to 0$ simultaneously, thus proving the AP property of the method rigorously. 
\end{itemize}
\end{itemize}
The precise statements and proofs are given in \Cref{sec:err-est}, \Cref{thm:err-est}, and \Cref{thm:asymp-err-est}, after introducing the necessary notations and auxiliary results. \color{black}


\subsection{Outline}
The remainder of this paper is organised as follows. In Sections
\ref{sec:comp-eul}–\ref{sec:incomp-eul}, we introduce the compressible
and incompressible Euler systems, respectively, and recall key
local-in-time existence results for the corresponding strong
solutions. In Section \ref{sec:fv-scheme}, we present the
discretisation framework together with the finite volume scheme and
discuss its stability and consistency properties. The main results of
the paper, namely the error estimates, are derived in Section
\ref{sec:err-est}. Numerical experiments illustrating the theoretical
findings are presented in Section \ref{sec:num-res}. Finally, we
conclude with some remarks in Section \ref{sec:conc}.

\section{The Compressible Euler System and its Strong Solutions} 
\label{sec:comp-eul}

The compressible barotropic Euler system parameterised by the Mach
number \color{black}$\veps \in \mathbb{R^+}$ \color{black}on $(0,T)\times\Omega$, where $\Omega\subset\R^d$ ($d =
2,3$) is open and bounded, reads 
\begin{subequations}
\label{eqn:eul-sys}
  \begin{gather}
  \label{eqn:mss-bal}
    \partial_t \varrho_{\varepsilon} + \Div(\varrho_{\varepsilon} \bu_{\varepsilon}) = 0, \\
  \label{eqn:mom-bal}
    \partial_t (\varrho_{\varepsilon} \bu_{\varepsilon}) + \Div (\varrho_{\varepsilon} \bu_{\varepsilon} \otimes \bu_{\varepsilon} ) + \frac{1}{\varepsilon^2} \nabla_x p(\varrho_{\varepsilon}) = 0, \\
    \vrho_{\varepsilon}(0,\cdot) = \vrho_{0,\varepsilon} > 0,\quad \bu_{\varepsilon}(0,\cdot) = \bu_{0,\varepsilon}.
  \end{gather}
\end{subequations}
Here, $\varrho_{\varepsilon}$ is the fluid density, and
$\bu_{\varepsilon}$ is the fluid velocity. We consider the
barotropic equation of state for the pressure, namely, we consider
$p(\varrho)=\varrho^\gamma$ where
$\gamma > 1$ is the ratio of the specific heats, also called the
adiabatic constant. Throughout, we suppose that we work with
either space-periodic or no-flux boundary conditions.

The internal energy per unit volume or the Helmholtz functional associated with the pressure is defined as 
\begin{equation}
\label{eqn:pres-pot}
  P(z) = \frac{z^{\gamma}}{\gamma - 1},
\end{equation}
and it satisfies 
\begin{equation}
\label{eqn:pres-pot-de}
  zP^\prime(z) - p(z) = P(z).
\end{equation}
Additionally, for $z_1, z_2 > 0$, we also introduce the following relative internal energy, which is defined as 
\begin{equation}
\label{eqn:rel-pot}
  \Pi(z_1\vert z_2) = P(z_1) - P(z_2) - P^\prime(z_2)(z_1 - z_2).
\end{equation}

In order to rule out nonphysical weak solutions, the following
condition on the dissipation of the total energy is imposed as an
admissibility criterion. Namely, the inequality
\begin{equation}
\label{eqn:eng-ineq}
  \int\limits_{\Omega}\Bigl\lbrack\half\vrho_\veps\abs{\bu_\veps}^2 + \frac{1}{\veps^2}P(\vrho_\veps)\Bigr\rbrack(t,\cdot)\,\dbx\leq \int\limits_{\Omega}\Bigl\lbrack\half\vrho_\veps\abs{\bu_\veps}^2 + \frac{1}{\veps^2}P(\vrho_\veps)\Bigr\rbrack(0,\cdot)\,\dbx
\end{equation}
is appended to the weak solutions of the compressible Euler system
\eqref{eqn:eul-sys}. 

For a fixed $\veps > 0$, the following local in-time existence of
strong solutions to the compressible Euler system \eqref{eqn:eul-sys}
is known in the literature; see \cite{Kat72,Maj84}. For simplicity, we
suppress the dependence of the variables on $\veps$ in the following
theorem. 
 
\begin{theorem}[Existence of Strong Solutions]
\label{thm:strong-soln}
  Let $\Omega \subset \mathbb{R}^d$ be a bounded domain with a smooth boundary $\partial \Omega$ and let $\veps>0$ be fixed. Suppose that the initial data ($\varrho_0, \bu_0$), with $\varrho_0>0$, belong to $H^{s}(\Omega;\mathbb{R}^{d+1})$ with $s>\frac{d}{2}+1$. Then, there exists $T_{max}>0$ such that the Euler system \eqref{eqn:eul-sys} has a unique classical solution ($\varrho, \bu$) in the class
  \begin{equation}
    (\varrho,\bu) \in C
    \bigl([0,T_{max}];H^s\bigl(\Omega;\mathbb{R}^{d+1}\bigr)\bigr) \ \cap
    \ C^1
    \bigl([0,T_{max}];H^{s-1}\bigl(\Omega;\mathbb{R}^{d+1}\bigr)\bigr).    
  \end{equation} 
\end{theorem}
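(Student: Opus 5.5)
The plan is to recast \eqref{eqn:eul-sys} as a symmetrizable hyperbolic system and apply the classical Kato--Majda energy method, in the form of \cite{Kat72,Maj84}. Since $\vrho_0>0$ and $H^s\hookrightarrow C^1$ for $s>\frac d2+1$, the initial density lies in a compact subset of $(0,\infty)$. There I introduce the sound speed $c(\vrho)=\sqrt{p'(\vrho)}/\veps$ and, for smooth solutions, rewrite the system in nonconservative form
\begin{equation*}
\Dt U+\sum_{j=1}^d A_j(U)\partial_j U=0,\qquad U=(\vrho,\bu).
\end{equation*}
The symmetrizer $A_0(U)=\mathrm{diag}\bigl(p'(\vrho)/(\veps^2\vrho),\vrho I_d\bigr)$ is positive definite for $\vrho>0$ and makes each $A_0A_j$ symmetric. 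Equivalently, the change of variable $\pi=\frac{2}{\veps(\gamma-1)}\sqrt{\gamma}\,\vrho^{(\gamma-1)/2}$ turns the system into one with symmetric coefficient matrices.

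\emph{Step 1 (linear theory).} For a given $V\in C([0,T];H^s)\cap C^1([0,T];H^{s-1})$ whose density component stays in a fixed compact subset $G\subset(0,\infty)$, I solve the linearised problem $A_0(V)\Dt U+\sum_j A_0(V)A_j(V)\partial_jU=0$. I apply $\partial^\alpha$ with $|\alpha|\le s$ and test with $\partial^\alpha U$. The commutators are controlled by the Moser calculus inequalities $\norm{\partial^\alpha(fg)-f\partial^\alpha g}_{L^2}\lesssim \norm{\nabla f}_{L^\infty}\norm{g}_{H^{s-1}}+\norm{f}_{H^s}\norm{g}_{L^\infty}$. Gronwall's lemma then gives an $H^s$ energy bound in terms of $\norm{V}_{H^s}$.

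\emph{Step 2 (iteration).} Picard iteration $U^{k+1}=\mathcal S(U^k)$ is bounded in $C([0,T_*];H^s)$ for $T_*$ small. The smallness also keeps the density in $G$, via $\norm{\vrho^k(t)-\vrho_0}_{L^\infty}\lesssim t\,\sup_\tau\norm{\Dt\vrho^k(\tau)}_{H^{s-1}}$. The map is contractive in $C([0,T_*];L^2)$, and by interpolation the limit solves the system.

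\emph{Step 3 (regularity and uniqueness).} Continuity in time with values in $H^s$ rather than only weak continuity follows from Kato's argument or a Bona--Smith regularisation. The $C^1([0,T];H^{s-1})$ regularity is read off from the equation. Uniqueness follows from an $L^2$ energy estimate for the difference of two solutions, or from the relative energy inequality.

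\emph{Boundary conditions.} The paper works with either periodic or no-flux conditions. In the periodic case the above suffices. For a bounded smooth domain with $\bu\cdot\uu n=0$, the boundary is characteristic, so one needs tangential estimates with conormal derivatives and recovers normal derivatives from the equations, as in Schochet or Beir\~ao da Veiga.

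\emph{Main obstacle.} I expect this characteristic-boundary $H^s$ estimate to be the hardest step. In the paper I would simply cite \cite{Kat72,Maj84} for it.
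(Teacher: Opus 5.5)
Your approach is the same as the paper's. The paper gives no proof of its own and simply invokes the Kato--Majda theory of symmetrizable hyperbolic systems \cite{Kat72,Maj84}. Your symmetrizer, the linearised $H^s$ estimates with Moser commutator bounds, the iteration keeping $\vrho$ in a compact subset of $(0,\infty)$, and the Kato/Bona--Smith continuity argument are exactly that theory, and they are correct for the periodic problem.

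The gap is in the case the statement literally describes: a bounded smooth $\Omega$ with $\bu\cdot\uu{n}=0$.

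\emph{Wrong references.} You rightly identify the boundary as characteristic, but then propose to cite \cite{Kat72,Maj84} for the boundary estimate. Those works treat the Cauchy problem in the whole space or on the torus, and \cite{Kat72} actually concerns incompressible flow. They contain no such estimate, so you would have to rely on the slip-boundary results of Beir\~ao da Veiga or Schochet instead.

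\emph{Missing hypotheses.} More importantly, those results require compatibility conditions on the data, which neither the statement nor your proposal imposes, and these conditions are genuinely necessary. Since $s-1>\frac d2>\frac12$, a solution in $C([0,T_{max}];H^s)\cap C^1([0,T_{max}];H^{s-1})$ has boundary traces of $\bu$ and $\Dt\bu$ that are continuous up to $t=0$. Differentiating $\bu\cdot\uu{n}=0$ in time and evaluating the momentum equation at $t=0$ then forces
\[
\bu_0\cdot\uu{n}=0 \quad\text{and}\quad \Bigl((\bu_0\cdot\grad)\bu_0+\frac{1}{\veps^2\vrho_0}\grad p(\vrho_0)\Bigr)\cdot\uu{n}=0 \quad\text{on }\partial\Omega,
\]
with further conditions for larger $s$. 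Without these assumptions the no-flux case of the theorem is false, so no argument, yours or the cited one, can prove it as stated. You should add the compatibility conditions to the hypotheses, or restrict the statement to the periodic setting where your proof is complete.
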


Assuming that the initial data have some additional regularity, we obtain the following corollary of the above proposition as a consequence of various Sobolev embedding theorems. 

\begin{corollary}
\label{cor:strong-soln}
  Suppose the assumptions of Proposition \ref{thm:strong-soln}
  hold. In addition, assume that $(\vrho_0,\bu_0)\in
  H^s(\Omega;\R^{d+1})$, where $s>\frac{d}{2} + 3$. Then, there exists
  $T_{max}>0$ such that the compressible system \eqref{eqn:eul-sys}
  has a strong solution $(\vrho,\bu)$ in the class 
  \begin{equation}
    \label{eqn:strng-reg}
    (\varrho,\bu) \in W^{2,\infty}([0,T_{max}]\times \Omega;\mathbb{R}^{d+1}).    
  \end{equation} 
\end{corollary}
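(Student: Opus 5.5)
We apply Theorem \ref{thm:strong-soln} with the given data and then upgrade the regularity by Sobolev embedding and the equations themselves. Since $s>\frac{d}{2}+3>\frac d2+1$, the hypotheses of Theorem \ref{thm:strong-soln} hold. We therefore get $T_{max}>0$ and a unique classical solution
\[
(\vrho,\bu)\in C([0,T_{max}];H^s)\cap C^1([0,T_{max}];H^{s-1}).
\]
Fix $T<T_{max}$, or work on the closed interval as in the theorem, so that all these norms are uniformly bounded in time.

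\textbf{Spatial derivatives.} On a bounded smooth domain, $H^{m}(\Omega)\hookrightarrow C^{k}(\overline\Omega)$ whenever $m>\frac d2+k$. From $s>\frac d2+3$ we obtain $H^s\hookrightarrow C^3(\overline\Omega)$. Hence $\vrho,\bu$ and all spatial derivatives up to order three are continuous and bounded, uniformly on $[0,T_{max}]$. Similarly $s-1>\frac d2+2$ gives $H^{s-1}\hookrightarrow C^2(\overline\Omega)$, so $\Dt\vrho,\Dt\bu$ and their spatial derivatives up to order two are bounded. Compactness of $[0,T_{max}]\times\overline\Omega$ then gives $L^\infty$ bounds.

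\textbf{Mixed and second time derivatives.} These are the main obstacle, because the theorem only controls one time derivative. We recover $\Dt^2$ from the equations. On the region where the solution is classical and $\vrho>0$, we rewrite \eqref{eqn:eul-sys} in nonconservative form:
\[
\Dt\vrho=-\bu\cdot\grad\vrho-\vrho\,\Div\bu,
\qquad
\Dt\bu=-(\bu\cdot\grad)\bu-\frac{1}{\veps^2}\frac{p'(\vrho)}{\vrho}\grad\vrho .
\]

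Positivity is needed for the nonlinearity $p'(\vrho)/\vrho=\gamma\vrho^{\gamma-2}$ to be smooth. Since $\vrho\in C([0,T_{max}]\times\overline\Omega)$ and $\vrho_0>0$ on the compact set $\overline\Omega$, we have $\vrho\ge c>0$ after possibly shrinking $T_{max}$; this positivity is implicit in the classical solution of Theorem \ref{thm:strong-soln}.

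We then differentiate these identities once in time. Every term of $\Dt^2\vrho$ and $\Dt^2\bu$ becomes a product of a smooth function of $\vrho$ with factors of the form $\grad^a\Dt^b(\vrho,\bu)$, where $a\le 2$ and $b\le1$, or $a\le1$ and $b=1$. All of these are already bounded by the previous step. The justification that $\Dt^2$ exists is standard: differentiate the equation in the sense of distributions and note that the right-hand side lies in $C([0,T_{max}];H^{s-2})\hookrightarrow C([0,T_{max}];C^1)$. The same argument bounds the mixed derivatives $\Dt\grad$.

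\textbf{Conclusion.} All derivatives of order at most two in $(t,x)$ are bounded, so $(\vrho,\bu)\in W^{2,\infty}([0,T_{max}]\times\Omega;\R^{d+1})$. The extra regularity margin (three rather than two spatial derivatives) is what makes the time-differentiation step close in the $L^\infty$ setting.
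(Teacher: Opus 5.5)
Your proposal is correct and follows essentially the same route as the paper: apply Theorem \ref{thm:strong-soln}, use Sobolev embeddings (in particular $H^{s-1}\hookrightarrow C^{2}$) to bound the spatial and mixed derivatives, and recover the second time derivatives from the equations themselves. Your version is more careful than the paper's in two places. You make explicit the uniform lower bound on $\vrho$ needed for $p'(\vrho)/\vrho$ to be smooth, and you justify that $\Dt^{2}(\vrho,\bu)$ actually exists rather than only giving its formal expression.
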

\begin{proof}
  For $s>\frac{d}{2}+3>\frac{d}{2}+ 1$, we have $(\varrho,\bu)
  \in C^1
  \left([0,T_{max}];H^{s-1}\left(\Omega;\mathbb{R}^{d+1}\right)\right)$
  from Theorem \ref{thm:strong-soln}. For $s>\frac{d}{2}+2$, we have
  $(\varrho,\bu) \in W^{1,\infty} \left([0,T_{max}]\times
    \Omega;\mathbb{R}^{d+1}\right)$ since $H^{s-1}(\Omega)
  \hookrightarrow C^1(\Omega)$. Since $H^{s-1}(\Omega) \hookrightarrow
  C^2(\Omega)$ for $s>\frac{d}{2}+3$, and since the second-order time
  derivatives involve combinations of $\varrho, \bu$ and their spatial
  derivatives up to second-order, we have $\left(\varrho,\bu)
    \in W^{2,\infty} ([0,T_{max}]\times \Omega;\mathbb{R}^{d+1}\right)$.  
\end{proof}

\color{black}
\section{The Incompressible Limit of the Compressible Euler System}
\label{sec:incomp-eul}
\begin{definition}[Well-Prepared Initial Data]
\label{def:well-prep}
  We say that initial data $ (\vrho_\epso, \bu_\epso)$ are well-prepared if
  \begin{align}
    &\norm{\vrho_\epso - 1}_{L^\infty(\Omega)}\lesssim \veps^2, \label{eqn:wp-mss} \\
    &\norm{\bu_\epso-\bfv_0}_{L^2(\Omega;\R^d)} \lesssim \veps ;\quad \Div\bfv_0 = 0.  \label{eqn:wp-vel}
  \end{align}
\end{definition}
\color{black}
Assuming well-prepared initial data, cf. \eqref{eqn:wp-mss}, \eqref{eqn:wp-vel} and performing the limit
$\veps\to 0$ of the compressible Euler system \eqref{eqn:eul-sys} will
yield the incompressible Euler system; see, e.g., \cite{KM82, Maj84}
for details. The compressible density $\vrho_\veps\to r$, where $r>0$
is a constant, and the velocity field $\bu_\veps\to\bfv$ as $\veps\to
0$. Then, passing to the limit $\veps\to 0$ in the compressible Euler
equations \eqref{eqn:eul-sys}, the limit $(r,\bfv)$ will satisfy 
\begin{subequations}
\label{eqn:incomp-eul}
  \begin{gather}
    \Div\bfv = 0, \label{eqn:div-free}\\
    \Dt\bfv + \Div(\bfv\otimes\bfv) + \frac{1}{r}\grad\pi =
    0 \label{eqn:incomp-mom-bal}, \\ 
    \bfv(0,\cdot) = \bfv_0. \label{eqn:incomp-ini}
  \end{gather}
\end{subequations}
Here, $\pi$ denotes the incompressible pressure and it is the limit of $\frac{1}{\veps^2}(p(\vrho_\veps) - p(r))$ as $\veps\to 0$, and further, $\pi$ is mean-free. For the sake of simplicity, we assume that $r = 1$ throughout.  

Analogous to the compressible case, we state the following local
in-time existence result for strong solutions to the incompressible
Euler system \eqref{eqn:incomp-eul}. 

\begin{theorem}[Existence of Strong Solutions (Incompressible)]
\label{prop:incomp-str}
  Let $\Omega \subset \mathbb{R}^d$ be a bounded domain with a smooth boundary $\partial \Omega$. Suppose that the initial data $\bfv_0\in H^{s}(\Omega;\mathbb{R}^d)$ with $s>\frac{d}{2}+1$. Then, there exists $T_{max}>0$ such that the incompressible Euler system \eqref{eqn:incomp-eul} has a unique classical solution $\bfv$ in the class
  \begin{equation}
    \bfv \in C ([0,T_{max}];H^s(\Omega;\mathbb{R}^{d})) \ \cap \ C^1 ([0,T_{max}];H^{s-1}(\Omega;\mathbb{R}^{d})) \text{ and } \grad\pi \in L^{\infty} ([0,T_{max}];H^{s-1}(\Omega;\mathbb{R}^{d})).   
  \end{equation} 
\end{theorem}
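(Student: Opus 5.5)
We would prove \Cref{prop:incomp-str} by the classical energy method of Kato and Bourguignon--Brezis, working with the Leray projected formulation. Let $\mathbb{P}$ denote the Helmholtz--Leray projector onto divergence-free fields tangent to $\partial\Omega$ (or mean-free periodic fields). Applying $\mathbb{P}$ to \eqref{eqn:incomp-mom-bal} eliminates the pressure and gives
\[
\Dt\bfv + \mathbb{P}\bigl(\bfv\cdot\grad\bfv\bigr) = 0,
\]
using $\Div(\bfv\otimes\bfv) = \bfv\cdot\grad\bfv$ when $\Div\bfv=0$. The pressure is then recovered afterwards from the Neumann problem
\[
\Delta\pi = -\Div\Div(\bfv\otimes\bfv),
\qquad
\partial_n\pi = -(\bfv\cdot\grad\bfv)\cdot\uu{n} \text{ on } \partial\Omega,
\]
normalised to be mean-free.

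The key steps, in order, are as follows.

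First, construct approximate solutions. We would use either a Galerkin scheme in eigenfunctions of the Stokes operator or mollification $\bfv^\delta$ of the nonlinearity. This yields smooth solutions on their maximal interval.

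Second, derive the a priori $H^s$ estimate. Apply $\partial^\alpha$ for $|\alpha|\le s$ (for noninteger $s$, use $\Lambda^s$ with Kato--Ponce commutator estimates), take the $L^2$ inner product with $\partial^\alpha\bfv$, and use $\Div\bfv=0$ together with $\bfv\cdot\uu{n}=0$, so that the top-order transport term $\int (\bfv\cdot\grad)\partial^\alpha\bfv\cdot\partial^\alpha\bfv$ vanishes. The remaining commutators are bounded by
\[
C\norm{\grad\bfv}_{L^\infty}\norm{\bfv}_{H^s}^2 .
\]
Since $s>\frac d2+1$, the embedding $H^{s-1}\hookrightarrow L^\infty$ gives
\[
\frac{d}{dt}\norm{\bfv}_{H^s}^2\le C\norm{\bfv}_{H^s}^3 .
\]
This yields a time $T_{max}\sim 1/(C\norm{\bfv_0}_{H^s})$ independent of the approximation parameter.

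Third, pass to the limit. Uniform bounds in $L^\infty(0,T;H^s)$ and $\Dt\bfv\in L^\infty(0,T;H^{s-1})$ give compactness by Aubin--Lions in $C([0,T];H^{s'})$ for $s'<s$, which suffices to pass to the limit in the nonlinearity. Uniqueness follows from an $L^2$ energy estimate for the difference of two solutions, combined with Gronwall's inequality and the bound $\grad\bfv\in L^\infty$.

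Fourth, obtain the stated regularity. Continuity into $H^s$ (rather than weak continuity) follows from the Bona--Smith argument, or from time reversibility plus continuity of the norm. The $C^1([0,T];H^{s-1})$ regularity then follows from the equation, since $\bfv\cdot\grad\bfv\in H^{s-1}$ by the algebra property. Finally, elliptic regularity for the Neumann problem gives $\grad\pi\in L^\infty(0,T;H^{s-1})$.

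The main obstacle is the boundary in the top-order energy estimate. Tangential derivatives preserve the boundary condition, but normal derivatives do not, so $\partial^\alpha\bfv\cdot\uu{n}\ne0$ and the integration by parts fails. We would handle this as Temam and Bourguignon--Brezis do. Work with the vorticity $\omega=\operatorname{curl}\bfv$, which satisfies a pure transport equation with no boundary condition needed. Then recover $\norm{\bfv}_{H^s}$ via the div--curl elliptic estimate
\[
\norm{\bfv}_{H^s}\lesssim \norm{\operatorname{curl}\bfv}_{H^{s-1}}+\norm{\bfv}_{L^2},
\]
valid for $\Div\bfv=0$ and $\bfv\cdot\uu{n}=0$ on a smooth bounded domain (with an extra harmonic-field term if $\Omega$ is not simply connected). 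In the periodic case this difficulty disappears entirely.
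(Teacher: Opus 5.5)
The paper gives no argument of its own here; it only cites Kato and Majda. Without a boundary, e.g.\ in the periodic case the paper also allows, your Steps 1--4 are the standard energy-method proof and go through as written. The gap is the bounded smooth domain that the statement explicitly assumes.

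You correctly see that there the top-order estimate must go through $\omega=\operatorname{curl}\bfv$ and the div--curl inequality. But this estimate is never attached to approximate solutions that actually satisfy it, and neither construction in your Step 1 does.

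\emph{Galerkin.} Galerkin in eigenfunctions $\uu{e}_k$ of the Stokes operator $A$ fails already at $t=0$. These eigenfunctions obey the no-slip condition, so $D(A^{1/2})=\{\uu{z}\in H^1_0(\Omega;\R^d):\Div\uu{z}=0\}$. Let $\mathcal{G}_m$ denote the projection onto $\mathrm{span}\{\uu{e}_1,\dots,\uu{e}_m\}$. Then $\norm{\grad\mathcal{G}_m\bfv_0}_{L^2}^2=\sum_{k\le m}\lambda_k\abs{(\bfv_0,\uu{e}_k)}^2\to\infty$ unless $\bfv_0$ vanishes on $\partial\Omega$. So there is no uniform $H^1$ bound, let alone an $H^s$ bound.

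A basis adapted to the slip condition cures the initial data but not the estimate, because $\mathcal{G}_m$ commutes neither with $\partial^\alpha$ nor with $\operatorname{curl}$. The vorticity of a Galerkin solution then satisfies no transport equation. A direct $H^s$ estimate instead produces the boundary terms you mention, together with the generally non-vanishing pressure term $\int_\Omega\partial^\alpha\grad\pi\cdot\partial^\alpha\bfv\,\dbx=\int_{\partial\Omega}\partial^\alpha\pi\,\partial^\alpha\bfv\cdot\uu{n}\,\mathrm{d}S$.

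\emph{Mollification.} This has the same defect. The mollified Euler system needs $J_\delta$ to be self-adjoint, to commute with $\partial^\alpha$ and $\mathbb{P}$, and to preserve $\Div\bfv=0$ and $\bfv\cdot\uu{n}=0$. These are properties of Fourier multipliers and are lost on a bounded domain. For the same reason, $\Lambda^s$ and the Kato--Ponce estimates are not available there for non-integer $s$.

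What is missing is an approximation built on the vorticity formulation itself. For example, given $\bfv^k$ divergence-free and tangent to $\partial\Omega$, solve the \emph{linear} problem $\Dt\omega^{k+1}+(\bfv^k\cdot\grad)\omega^{k+1}=(\omega^{k+1}\cdot\grad)\bfv^k$ by characteristics. The right-hand side is absent when $d=2$, and for $d=3$ the equation preserves $\Div\omega^{k+1}=0$. No boundary data are needed, since $\bfv^k\cdot\uu{n}=0$. Then define $\bfv^{k+1}$ through the div--curl system $\operatorname{curl}\bfv^{k+1}=\omega^{k+1}$, $\Div\bfv^{k+1}=0$, $\bfv^{k+1}\cdot\uu{n}=0$, with the usual harmonic-field adjustments if $\Omega$ has nontrivial topology.

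The $H^{s-1}$ transport estimate for $\omega^{k+1}$ has no boundary term, because $\bfv^k$ is tangent. Combined with the div--curl inequality, it gives a bound in $L^\infty(0,T;H^s)$ uniform in $k$ on a short interval, plus convergence in a weaker norm. Your Steps 3--4 then apply.

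Alternatively, you can appeal to the abstract quasilinear theory of Kato--Lai or the Lagrangian approach of Ebin--Marsden, both of which cover domains with boundary. Note also that the statement tacitly requires $\Div\bfv_0=0$ and $\bfv_0\cdot\uu{n}=0$, which your projected formulation uses.
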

\begin{proof}
  See \cite{Kat72, Maj84}. 
\end{proof}

\begin{corollary}
\label{cor:str-incomp-sol}
  Let $\Omega \subset \mathbb{R}^d$ be a bounded domain with a smooth boundary $\partial \Omega$. Suppose that the initial data $\boldsymbol{v}_0$ belong to $H^{s}(\Omega;\mathbb{R}^d)$ with $s>\frac{d}{2}+ 3$. Then, there exists $T_{max}>0$ such that the incompressible Euler system \eqref{eqn:incomp-eul} has a unique strong solution ($\pi, \boldsymbol{v}$) in the class
  \begin{equation}
  \label{SS-ICE: Regularity}
    (\pi,\boldsymbol{v}) \in W^{2,\infty} ([0,T_{max}]\times \Omega;\mathbb{R}^{d+1}).   
  \end{equation} 
\end{corollary}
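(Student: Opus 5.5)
The plan is to mirror the proof of Corollary \ref{cor:strong-soln}. We invoke Theorem \ref{prop:incomp-str} and then bootstrap regularity through Sobolev embeddings, using the equations to convert spatial regularity into temporal regularity. The pressure $\pi$ is handled separately, through the elliptic problem it solves.

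\textbf{Velocity.} Since $s>\frac d2+3>\frac d2+1$, Theorem \ref{prop:incomp-str} gives
\[
\bfv\in C([0,T_{max}];H^s)\cap C^1([0,T_{max}];H^{s-1}),\qquad \grad\pi\in L^\infty([0,T_{max}];H^{s-1}).
\]
We have $H^{s}\hookrightarrow C^3(\overline\Omega)$ and $H^{s-1}\hookrightarrow C^2(\overline\Omega)$ because $s-1>\frac d2+2$. Hence $\bfv$, $\grad\bfv$, $\grad^2\bfv$ and $\Dt\bfv$, $\grad\Dt\bfv$ are all bounded. For the second time derivative, differentiate \eqref{eqn:incomp-mom-bal} in time:
\[
\Dt^2\bfv=-\Dt\Div(\bfv\otimes\bfv)-\grad\Dt\pi .
\]
The first term involves only $\bfv$, $\Dt\bfv$ and their first spatial derivatives, so it is bounded. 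This leaves the task of controlling $\pi$ and $\Dt\pi$.

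\textbf{Pressure.} Taking the divergence of \eqref{eqn:incomp-mom-bal} and using \eqref{eqn:div-free} gives
\[
-\Delta\pi=\Div\Div(\bfv\otimes\bfv)=\sum_{i,j}\partial_i v_j\,\partial_j v_i .
\]
We pair this with the Neumann condition $\grad\pi\cdot\uu{n}=-(\bfv\cdot\grad\bfv)\cdot\uu{n}$ on $\partial\Omega$ (or periodicity), and with the mean-free normalisation. Elliptic regularity then yields
\[
\norm{\pi(t)}_{H^{s}}\lesssim \norm{\bfv(t)}_{H^{s}}^2 .
\]
This gives $\pi\in L^\infty(H^s)\hookrightarrow L^\infty(C^3)$, which consistently upgrades the bound on $\grad\pi$.

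Differentiating the elliptic problem in time gives
\[
-\Delta\Dt\pi = 2\sum_{i,j}\partial_i \Dt v_j\,\partial_j v_i ,
\]
whose right-hand side lies in $H^{s-2}$, with analogous boundary data. Therefore $\Dt\pi\in L^\infty(H^{s-1})\hookrightarrow L^\infty(C^2)$. This bounds $\grad\Dt\pi$, and hence $\Dt^2\bfv$. Finally,
\[
\Dt^2\pi=-\Delta^{-1}\Bigl(2\,\partial\Dt^2\bfv\,\partial\bfv+2\,\partial\Dt\bfv\,\partial\Dt\bfv\Bigr).
\]
The right-hand side is bounded in $H^{s-3}$ with $s-3>\frac d2$, so $\Dt^2\pi\in L^\infty(H^{s-1})$ and is in particular bounded. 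Mixed derivatives $\grad\Dt$ are covered by the same bounds. Collecting everything gives $(\pi,\bfv)\in W^{2,\infty}([0,T_{max}]\times\Omega;\R^{d+1})$.

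\textbf{Main obstacle.} The hardest part is the time regularity of $\pi$, specifically $\Dt\pi$ and $\Dt^2\pi$. These are not supplied directly by Theorem \ref{prop:incomp-str} and require the elliptic pressure equation together with its compatible Neumann data on a smooth bounded domain. I would first treat the periodic case, where $\Delta^{-1}$ is an explicit Fourier multiplier and the estimates are immediate. I would then cite standard Neumann elliptic regularity (e.g.\ as in \cite{Maj84}) for the smooth bounded case. Uniqueness is inherited from Theorem \ref{prop:incomp-str}.
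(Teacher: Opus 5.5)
Your proposal is correct and follows essentially the same route as the paper: invoke Theorem \ref{prop:incomp-str}, use $H^{s-1}\hookrightarrow C^2$ and the momentum equation for the time regularity of $\bfv$, and recover $\pi$ and its time derivatives from the elliptic pressure problem. Your version is more explicit than the paper's sketch, which only asserts $\pi\in C^1([0,T_{max}];W^{2,\infty})$ and the final upgrade of $\pi$ via ``standard elliptic estimates''. You instead write out the time-differentiated Neumann problems and use the correct source term $\partial_i v_j\,\partial_j v_i$.
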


\begin{proof}
  Since $s>\frac{d}{2}+3>\frac{d}{2}+1$, we have $\boldsymbol{v} \in C^1([0,T_{max}];H^{s-1}(\Omega;\mathbb{R}^{d}))$ due to Theorem \ref{prop:incomp-str}.

  For $s>\frac{d}{2}+2$, we have $\boldsymbol{v} \in C^1 ([0,T_{max}];C^1 (\Omega;\mathbb{R}^{d}))$ since $H^{s-1}(\Omega) \hookrightarrow C^1(\Omega)$. Then, we can recover $\pi$ as a solution of the elliptic problem 
  \[
  - \Delta_x \pi(t,\cdot) = \grad \boldsymbol{v}(t,\cdot) : \grad \boldsymbol{v}(t,\cdot) 
  \]
  for $t\leq T_{max}$. Due to standard elliptic estimates, we deduce $\pi \in C^1 ([0,T_{max}];W^{2,\infty}(\Omega))$.

  Next, as $s>\frac{d}{2}+3$, we have $H^{s-1}(\Omega) \hookrightarrow C^2(\Omega)$. Then, we infer that $\boldsymbol{v} \in W^{2,\infty} ([0,T_{max}]\times \Omega;\mathbb{R}^{d})$, as done earlier in Corollary \ref{cor:strong-soln}. Consequently, we can improve the regularity of $\pi$ and obtain $\pi\in W^{2,\infty} ([0,T_{max}]\times \Omega;\mathbb{R}^{d})$.
\end{proof}

\section{Finite Volume Scheme}
\label{sec:fv-scheme}

\subsection{Mesh and Unknowns}
\begin{figure}[htpb]
  \centering
  \begin{tikzpicture}
    \draw[black, thick] (0,0) rectangle (10,4);
    \draw[green!70!black, thick] (2.5, 2) -- (5,0);
    \draw[green!70!black, thick] (2.5, 2) -- (5,4);
    \fill[green!20!white] (5,0) -- (2.5, 2) -- (5,4) -- cycle;
    \draw[red, thick] (5,0) -- (7.5,2);
    \draw[red, thick] (5,4) -- (7.5,2);
    \fill[red!20!white] (5,0) -- (7.5, 2) -- (5,4) -- cycle;
    \draw[blue, thick] (5,0) -- (5,4);
    \draw[->, RedViolet, thick] (5,2) -- (6,2);
    \draw (5.5,2) node[anchor = south]{\color{RedViolet}{\footnotesize $\nuk$}};
    \filldraw [black] (2.5,2) circle (2pt) node[anchor = north east]{$x_K$};
    \filldraw [black] (5,2) circle (2pt) node[anchor = north east]{$x_\sigma$};
    \filldraw [black] (7.5,2) circle (2pt) node[anchor = north west]{$x_L$};
    \draw (1.25,2) node[anchor = south]{$K$};
    \draw (8.75,2) node[anchor = south]{$L$};
    \draw (5,0) node[anchor = north]{{\color{black}$\sigma = K\vert L$}};
    \draw (3.75,1) node[anchor = south west]{{\color{OliveGreen} $D_{K,\sigma}$}};
    \draw (6.25,1) node[anchor = south east]{{\color{red} $D_{L,\sigma}$}};
  \end{tikzpicture}
  \caption{Dual grid.}
  \label{fig:dual-grid}
\end{figure}
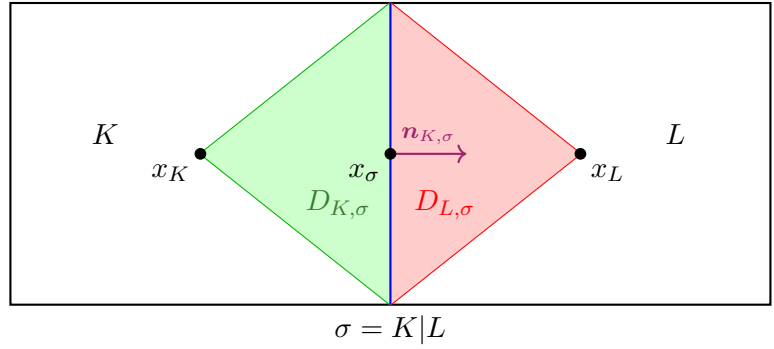
We consider a tessellation $\mathcal{T}$ of $\Omega\subset\R^d$,
consisting of closed, possibly non-uniform rectangles ($d=2$) or
closed, possibly non-uniform cuboids ($d=3$) such that
$\overline{\Omega} = \cup_{K\in\mathcal{T}} K$. The elements $K\in\T$
are referred to as primal cells or control volumes. By 
$x_K$, we denote the cell center of the primal cell
$K\in\T$. The collection of all edges ($d=2$) or faces
($d=3$) is denoted by $\mathcal{E}$, and the elements of $\E$ are denoted by $\sigma\in\E$. The sub-collections $\mathcal{E}_{int}$, $\mathcal{E}_{ext}$ and
$\mathcal{E}(K)$ denote the set of all internal edges, external edges
on the boundary $\partial \Omega$, and the edges of a cell $K \in
\mathcal{T}$, respectively. Given $K, L\in\T$, the cells are either disjoint or they share exactly one common edge, which is denoted by $\sigma=K\vert
L\in\E(K)\cap\E(L)$. For each $\sigma \in \mathcal{E}(K)$,
$\nuk$ denotes the unit normal to $\sigma$ pointing
outwards from $K$.  

The mesh size of the primal mesh $\T$ is $h = \max_{K \in \mathcal{T}} h_K$, where $h_K=\text{diam}(K)$. Throughout, we assume that there exists $h_0$ sufficiently small such that $h\in (0,h_0]$, \color{black}$h_0<1$\color{black}. We denote $a\lesssim b$ if $a\leq cb$ for a constant $c>0$ independent of the mesh parameters, and further, we write $a\approx b$ if $a\lesssim b$ and $b\lesssim a$.

Given a primal mesh $\T$ of $\Omega$, we consider the associated dual grid defined as follows. A dual cell $D_{\sigma}=D_{K,\sigma}\cup D_{L,\sigma}$ is associated to each $\sigma \in \mathcal{E}_{int}$, $\sigma = K\vert L$. Here, $D_{K,\sigma}$ (resp. $D_{L,\sigma}$) is a subset of $K$ (resp. $L$), and is shown in Figure \ref{fig:dual-grid}. Further, if $\sigma\in \E_{ext}\cap\E(K)$, we define  $D_{\sigma}=D_{K,\sigma}$. The collection of all dual cells is denoted by  $\mathcal{D}_{\mathcal{T}} = \{D_{\sigma} \}_{\sigma\in\mathcal{E}}$, and it follows that $\overline{\Omega} = \cup_{\sigma\in\E}D_\sigma$. Given $K\in\T$ and $\dsig\in\D_\T$, $\abs{K}$ and $\abs{\dsig}$, denote the respective Lebesgue measures and we suppose that $\absk\approx\abs{\dsig}\approx h^d$. Further, for $\sigma\in\E$, $\abssig$ denotes the ($d-1$)-dimensional surface measure and we assume $\abssig\approx h^{d-1}$.

By $\mathcal{L}_{\mathcal{T}}(\Omega)$, we denote the function space of scalar-valued piecewise constant functions on each cell $K \in \mathcal{T}$. The corresponding projection operator is denoted by $\Pi_{\mathcal{T}}: L^1(\Omega) \to \mathcal{L}_{\mathcal{T}}(\Omega)$, and is defined as 
\begin{equation}
  \Pi_{\mathcal{T}} q = \sum_{K \in \mathcal{T}} \bigl(\Pi_{\mathcal{T}} q\bigr)_K 1_K, \text{ with } \bigl(\Pi_{\mathcal{T}} q\bigr)_K = \frac{1}{|K|} \int_K q \dbx,
\end{equation}
Here, $1_K$ is the characteristic function of $K$. We analogously define $\mathcal{L}_{\mathcal{T}}(\Omega;\mathbb{R}^d)$, as the space of vector-valued piecewise constant functions with the projection operator being defined component-wise. We recall that the projection operator satisfies the estimate 
\begin{equation}
\label{eqn:proj-op-est}
  \norm{\Pi_\T \varphi - \varphi}_{L^p(\Omega)}\lesssim h\text{ for any }\varphi\in W^{1,\infty}(\Omega)\text{ and }1\leq p\leq\infty.
\end{equation}

For $q \in \mathcal{L}_{\mathcal{T}}(\Omega)$, $q_K$ represents the constant value of $q$ on cell $K \in \mathcal{T}$. Further, for $\sigma=K\vert L$, the average value of $q$ across $\sigma$ is defined as:
\begin{equation}
  \avg{q}_{\sigma} = \frac{q_K+q_L}{2},
\end{equation}
and this is defined component-wise for vector-valued $\boldsymbol{q} \in \mathcal{L}_{\mathcal{T}}(\Omega;\mathbb{R}^d)$. Furthermore, the jump of $q\in\Lt(\Omega)$ across $\sigma = K\vert L\in\E_{int}$ is defined as
\begin{equation}
  \diff{q}_{\sigma} = q_L - q_K,
\end{equation}
and it is considered component-wise for vector-valued $\boldsymbol{q} \in \mathcal{L}_{\mathcal{T}}(\Omega;\mathbb{R}^d)$.

We also introduce the following discrete gradient operators that are required. Given $q\in\Lt(\Omega)$, we define a discrete cell-centred gradient, denoted by $\gradt q$, which is defined as 
\begin{equation}
  \gradt q = \sum_{K\in\T}(\gradt q)_K 1_K,\quad (\gradt q)_K = \sum_{\sink}\frac{\abssig}{\absk}\avg{q}_{\sigma}\nuk.
\end{equation}
We similarly define a gradient on the dual cells, denoted by $\gradd q$ and defined as 
\begin{equation}
  \gradd q = \sum_{\sigma\in\E_{int}}(\gradd q)_{\sigma}1_{\dsig}, \quad (\gradd q)_\sigma = \frac{\abssig}{\abs{\dsig}}\diff{q}_{\sigma}\nuk.
\end{equation}
Note that for the simplicity of exposition, we have assumed that the above gradient vanishes on the dual grids corresponding to external edges.

For the ease of presentation, we suppress the dependence of the variables on $\veps$ in this section. Thus, the unknowns are the density $\vrho_K$ and the velocity $\bu_K$ for each $K\in\T$. As stated earlier, we wish to design a semi-implicit in time finite volume scheme to approximate the compressible Euler system \eqref{eqn:eul-sys}. To this end, we adopt the techniques of stabilisation, wherein an appropriate shifted velocity is introduced in the convection fluxes of the mass and momentum; see e.g.\ \cite{AGK23, AA24, PV16} and the references therein for a detailed review. Further, to derive some necessary consistency estimates, we also introduce additional viscous terms in the convective fluxes following the approach presented in \cite{FLM+21a}. With this in mind, the time interval $[0,T]$ is discretised as $0=t^0<t^1<\dots<t^N=T$, and $\Delta t = t^{n+1} - t^n$ is the constant time-step, \color{black}$\delt <1$\color{black}. The method is initialised by using the piecewise constant approximation of the initial data, namely:
\begin{equation}
\label{eqn:disc-ini}
  \varrho^0_K = \bigl(\Pi_{\mathcal{T}} \varrho_\epso\bigr)_K,\quad \bu_K^0 = \bigl(\Pi_{\mathcal{T}} \bu_\epso\bigr)_K.
\end{equation}

\subsection{Numerical Scheme}
For each $0\leq n\leq N-1$ and for each $K\in\T$, we introduce the following semi-implicit in time, fully discrete approximation of the compressible Euler system \eqref{eqn:eul-sys}:
\begin{subequations}
\label{eqn:fv-scheme}
  \begin{gather}
    \label{eqn:disc-mss}
    \frac{\varrho_{K}^{n+1}-\varrho_{K}^{n}}{\delt} + \sumintK{} F_{\sigma,K}^{n+1}  = 0,  \\
    \label{eqn:disc-mom}
    \frac{\varrho_{K}^{n+1} \bu_{K}^{n+1}-\varrho_{K}^{n}\bu_{K}^{n}}{\Delta t}+ \sumintK{}  \boldsymbol{G}_{\sigma,K}^{n+1}  + \frac{1}{\veps^2}(\gradt p^{n+1})_K = 0.
  \end{gather}
\end{subequations}
The mass and momentum convection fluxes are respectively given by
\begin{gather}
  F_{\sigma,K}^{n+1} = \vrho^{n+1}_{\sigma, up}\bfw^n_{\sigma}\cdot\nuk - \diff{\vrho^{n+1}}_{\sigma} \label{eqn:mss-flx}\\
  \boldsymbol{G}_{\sigma,K}^{n+1} = F_{\sigma,K}^{n+1} \bu_{\sigma,up}^n - \diff{\bu^n}_{\sigma}. \label{eqn:mom-flx}
\end{gather}
In the above, $\bfw_{\sigma}^n = \avg{\bu^n}_{\sigma} - \delta \bu_{\sigma}^{n+1}$ is the stabilised velocity, where $\delta\bu^{n+1}_\sigma$ is called as the stabilisation term. The choice of the stabilisation term will be made after performing a stability analysis. In addition, we denote $w_{\sigma,K}^n = \bfw_{\sigma}^n \cdot \nuk = \paraL{\avg{\bu^n}_{\sigma} - \delta \bu_{\sigma}^{n+1}} \cdot \nuk = u_{\sigma,K}^n - \delta u_{\sigma,K}^{n+1}$. The term $\vrho^{n+1}_{\sigma, up}$ is the upwind density. Namely, we write 
\begin{equation}
\label{eqn:upwd-den}
  \vrho^{n+1}_{\sigma, up}w^n_\sk = \rk^{n+1}(w^n_\sk)_{+} + \vrho^{n+1}_L(w^n_\sk)_{-}, 
\end{equation}
where the positive and negative parts of $w_{\sigma,K}^n$ are given by:
\begin{gather}
  \paraL{w_{\sigma,K}^n}_+ = \max \{ u_{\sigma,K}^n,0 \} - \min \{ \delta u_{\sigma,K}^{n+1},0 \} \geq 0, \\  
  \paraL{w_{\sigma,K}^n}_- = \min \{ u_{\sigma,K}^n,0 \} - \max \{ \delta u_{\sigma,K}^{n+1},0 \} \leq 0. 
\end{gather}
The above choice is made to ensure an upwind bias and maintain a proper signs split. In view of \eqref{eqn:upwd-den}, one can split the mass flux \eqref{eqn:mss-flx} into positive and negative parts as 
\begin{equation}
\label{eqn:mss-flx-split}
  F^{n+1}_\sk = \vrho^{n+1}_K((w^n_\sk)_+ + 1) + \vrho^{n+1}_L((w^n_\sk)_- - 1) = F^{n+1, +}_\sk + F^{n+1, -}_\sk.
\end{equation}

Finally, the velocity $\bu^n_{\sigma,up}$ appearing in the momentum flux \eqref{eqn:mom-flx} is upwind with respect to the positive and negative parts of the mass flux, namely, 
\begin{equation}
\label{eqn:upw-vel}
  F^{n+1}_{\sigma,K} \bu^n_{\sigma,up} = F^{n+1,+}_{\sigma,K} \bu^n_{K} + F^{n+1,-}_{\sigma,K} \bu^n_L.
\end{equation}

\subsection{Energy Stability} 
\label{subsec:eng-stab}

In this section, we state the energy stability property of the scheme \eqref{eqn:fv-scheme} and postpone its proof to the appendix. Analogous considerations can also be found in \cite{AGK23, AA24}.

\begin{theorem}[Energy Stability]
\label{thm:eng-stab}
Suppose the stabilisation term is chosen as 
  \begin{equation}
  \label{eqn:stab-term}
    \du^{n+1}_\sigma = \frac{\eta\delt}{\veps^2}(\gradd p^{n+1})_{\sigma}.
  \end{equation}
  Further, assume that the following conditions hold:
  \begin{enumerate}
    \item $\displaystyle\eta > \frac{3d}{2}\Bigl\{\mskip-5mu\Bigl\{ \frac{1}{\vrho^{n+1}} \Bigr\}\mskip-5mu\Bigr\}_{\sigma}$ for each $\sigma = K\vert L\in\E_{int}$;
        
    \item $\displaystyle\frac{1}{4} - \frac{\delt}{\rk^{n}}\sum_{\sink}\frac{\abssig}{\absk}\lvert F^{n+1}_\sk\rvert\geq 0$;
        
    \item $\displaystyle\frac{1}{3} - \frac{\delt}{\rk^{n+1}}\frac{\abs{\partial K}}{\absk}>\beta$, where $0<\beta<\dfrac{1}{3}$ and $\displaystyle\abs{\partial K} = \sum_{\sink}\abssig$.
  \end{enumerate}
  Then, the following local in-time energy inequality holds for each $0\leq n\leq N-1$:
  \begin{equation}
  \label{eqn:loc-eng-ineq}
    \sum_{K\in\T}\absk\Bigl(\half\rk^{n+1}\abs{\bu^{n+1}_K}^2 + \frac{1}{\veps^2}P(\rk^{n+1})\Bigr)\leq \sum_{K\in\T}\absk\Bigl(\half\rk^{n}\abs{\bu^{n}_K}^2 + \frac{1}{\veps^2}P(\rk^{n})\Bigr). 
  \end{equation}
  In addition, we also have the following entropy inequality for each $0\leq n\leq N-1$: 
  \begin{equation}
  \label{eqn:loc-ent-ineq}
    \sum_{K\in\T}\absk\Bigl(\half\rk^{n+1}\abs{\bu^{n+1}_K}^2 + \frac{1}{\veps^2}\Pi(\rk^{n+1}\vert 1)\Bigr)\leq \sum_{K\in\T}\absk\Bigl(\half\rk^{n}\abs{\bu^{n}_K}^2 + \frac{1}{\veps^2}\Pi(\rk^{n}\vert 1)\Bigr). 
  \end{equation}
\end{theorem}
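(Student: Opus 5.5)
The argument splits the energy into an internal part and a kinetic part and treats each with the discrete mass equation \eqref{eqn:disc-mss}. The pressure work produced by the momentum update is then cancelled against the stabilisation term \eqref{eqn:stab-term}.

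\emph{Internal energy.} Multiply \eqref{eqn:disc-mss} by $\frac{1}{\veps^2}P'(\rk^{n+1})\absk$ and sum over $K$. Convexity of $P$ gives
\[
P'(\rk^{n+1})(\rk^{n+1}-\rk^n)\geq P(\rk^{n+1})-P(\rk^n),
\]
so the left-hand side produces the internal-energy increment. For the flux term, write $F^{n+1}_\sk$ as an upwind part plus the viscous part $-\diff{\vrho^{n+1}}_\sigma$. The viscous part, after summation by parts, equals $\sum_\sigma\abssig\diff{\vrho^{n+1}}_\sigma\diff{P'(\vrho^{n+1})}_\sigma\ge 0$, since $P'$ is monotone; this is a dissipative contribution. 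For the upwind part, use the identity $zP'(z)-P(z)=p(z)$ from \eqref{eqn:pres-pot-de} and the standard upwind convexity argument (a Taylor expansion of $P$ between $\rk$ and $\vrho_L$). This shows that the upwind part equals
\[
-\sum_\sigma\abssig\, p^{n+1}\text{-weighted discrete divergence of }\bfw^n,
\]
plus a nonnegative upwind remainder. Since $\bfw^n=\avg{\bu^n}-\du^{n+1}$, the leading term splits in two. The first piece, $\frac{1}{\veps^2}\sum_K\absk(\gradt p^{n+1})_K\cdot\bu^n_K$, is the pressure work; this follows from the duality between $\gradt$ and the centred divergence of $\avg{\bu^n}_\sigma$. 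The second piece is
\[
\frac{1}{\veps^2}\sum_\sigma\abs{\dsig}(\gradd p^{n+1})_\sigma\cdot\du^{n+1}_\sigma=\frac{\eta\delt}{\veps^4}\sum_\sigma\abs{\dsig}\abs{(\gradd p^{n+1})_\sigma}^2,
\]
a favourable stabilising term.

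\emph{Kinetic energy.} Multiply \eqref{eqn:disc-mom} by $\bu^{n+1}_K\absk$. Use \eqref{eqn:disc-mss} in the standard way, which requires the mass flux in \eqref{eqn:mom-flx} to coincide with that of \eqref{eqn:disc-mss}. This gives
\[
\half\rk^{n+1}\abs{\bu^{n+1}_K}^2-\half\rk^n\abs{\bu^n_K}^2+\half\rk^n\abs{\bu^{n+1}_K-\bu^n_K}^2
\]
plus upwind flux terms. The velocity convection is explicit, so these produce flux terms in $\bu^n$ together with a remainder bounded by $\delt\sum\abssig\abs{F}\abs{\bu^{n+1}-\bu^n}^2$-type quantities. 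The viscous part $-\diff{\bu^n}_\sigma$ produces a similar cross term. Condition (2) (factor $\frac14$) and condition (3) (factor $\frac13$, with margin $\beta$) are exactly what allow these remainders to be absorbed by $\half\rk^n\abs{\bu^{n+1}-\bu^n}^2$, respectively by the analogous $\rk^{n+1}$-weighted quantity. The pressure term gives $\frac{1}{\veps^2}\sum_K\absk(\gradt p^{n+1})_K\cdot\bu^{n+1}_K$.

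\emph{Combining and the main obstacle.} Adding the two balances, the pressure works do not cancel exactly. What remains is
\[
\frac{1}{\veps^2}\sum_K\absk(\gradt p^{n+1})_K\cdot(\bu^{n+1}_K-\bu^n_K).
\]
This cross term is the main obstacle. I would bound it by Young's inequality: the velocity increment is absorbed by $\frac{\beta}{2}\rk^{n+1}\abs{\bu^{n+1}-\bu^n}^2$-type terms. The remaining $\frac{\delt}{\veps^4}\frac{1}{\rk}\abs{(\gradt p^{n+1})_K}^2$ is controlled by the stabilising term. The tool for the latter is the observation that on rectangles or cuboids each component of $(\gradt p)_K$ is the average of the two dual gradients $(\gradd p)_\sigma$ on opposite faces. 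Jensen's inequality then gives
\[
\abs{(\gradt p)_K}^2\le\frac d2\cdot\sum_{\sink}\frac{\abs{\dsig}}{\absk}\abs{(\gradd p)_\sigma}^2
\]
up to geometric factors. Tracking the constants, together with distributing the $1/\rk$ weight onto faces via $\avg{1/\vrho^{n+1}}_\sigma$, is what should produce the threshold $\eta>\frac{3d}{2}\avg{1/\vrho^{n+1}}_\sigma$ in condition (1). I expect the bookkeeping of these constants, and of the time-level mismatches ($\rk^n$ versus $\rk^{n+1}$), to be the delicate part. Finally, \eqref{eqn:loc-ent-ineq} follows from \eqref{eqn:loc-eng-ineq}: $\Pi(\vrho\vert1)=P(\vrho)-P(1)-P'(1)(\vrho-1)$, and the total mass $\sum_K\absk\rk^n$ is conserved by \eqref{eqn:disc-mss}, since the fluxes are conservative and vanish on the boundary. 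Hence the linear terms are identical at both time levels.
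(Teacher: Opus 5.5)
Your route is viable, but it treats the kinetic energy differently from the paper, and one correction is needed to reach hypotheses (1) and (3) exactly as stated.

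\emph{What the paper does.} It tests the velocity balance \eqref{eqn:vel-bal} with $\bu^n_K$, not $\bu^{n+1}_K$.
\begin{itemize}
\item The two pressure works then cancel exactly after summation by parts.
\item The price is a remainder $S^{n+1}_{K,\delt}$ containing the wrong-signed term $\frac{\rk^{n+1}}{2\delt}|\bu^{n+1}_K-\bu^n_K|^2$.
\item That term is bounded by squaring the three-term velocity update with $(a+b+c)^2\le 3(a^2+b^2+c^2)$. This is the source of the $3$ in $\frac{3\delt}{2\veps^4\rk^{n+1}}|(\gradt p^{n+1})_K|^2$, hence of the $3$ in $\frac{3d}{2}$.
\item Condition (2) is used to make $1+\frac{3\delt}{\rk^{n+1}}\sum_\sigma\frac{|\sigma|}{|K|}F^{n+1,-}_{\sigma,K}\ge 0$.
\item Condition (3) leaves the residual $-\frac{3\beta}{2}\sum_\sigma\frac{|\sigma|}{|K|}|\diff{\bu^n}_\sigma|^2$.
\end{itemize}

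\emph{How your route compares.} You test with $\bu^{n+1}_K$, keep the coercive term, and apply Young to the cross terms, including $\frac{1}{\veps^2}(\gradt p^{n+1})_K\cdot(\bu^{n+1}_K-\bu^n_K)$. This is the dual of the paper's argument: the two differ exactly by the dot product of \eqref{eqn:vel-bal} with $\bu^{n+1}_K-\bu^n_K$.
\begin{itemize}
\item Done on \eqref{eqn:vel-bal}, the coercive term is $\frac{\rk^{n+1}}{2\delt}|\bu^{n+1}_K-\bu^n_K|^2$.
\item The cross terms are $\sum_\sigma\frac{|\sigma|}{|K|}(1-F^{n+1,-}_{\sigma,K})\diff{\bu^n}_\sigma\cdot(\bu^{n+1}_K-\bu^n_K)$ plus the pressure term.
\item Give one third of the coercive term to each of the three pieces: the ``$1$'' part via (3), the $F^{n+1,-}$ part via (2), and the pressure part. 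This reproduces \eqref{eqn:rem-est} exactly, under the same requirements on (2) and (3).
\end{itemize}
Your version makes it transparent that unequal Young weights trade the $\eta$-threshold against the time-step constants. The paper's version makes the pressure cancellation an identity.

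The following parts are common to both arguments: the internal-energy step (convexity, nonnegative upwind and viscous remainders), the stabilisation identity producing $\frac{\eta\delt}{\veps^4}\sum_\sigma|\dsig||(\gradd p^{n+1})_\sigma|^2$, and the comparison of $|(\gradt p^{n+1})_K|^2$ with face terms. Your derivation of \eqref{eqn:loc-ent-ineq} from mass conservation is equivalent to the paper's renormalisation identity.

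\emph{The correction.} As written, you test the conservative momentum equation, not \eqref{eqn:vel-bal}. Then:
\begin{itemize}
\item Your coercive term is $\frac{\rk^{n}}{2\delt}|\bu^{n+1}_K-\bu^n_K|^2$.
\item The unfavourable upwind remainder is $\frac12\sum_\sigma\frac{|\sigma|}{|K|}F^{n+1,+}_{\sigma,K}|\bu^{n+1}_K-\bu^n_K|^2$.
\item There is no ``$\rk^{n+1}$-weighted quantity'' for (3) to act on.
\item Young on the pressure cross term produces $\frac{\delt}{\veps^4\rk^{n}}|(\gradt p^{n+1})_K|^2$. That gives a threshold in $\avg{1/\vrho^{n}}_\sigma$, or a constant worse than $\frac{3d}{2}$, rather than hypothesis (1).
\end{itemize}
The fix is to regroup with the mass balance before estimating, i.e.\ work with \eqref{eqn:vel-bal}. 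Equivalently, keep the favourable term $\frac12|F^{n+1,-}_{\sigma,K}|\,|\bu^{n+1}_K-\bu^n_L|^2$ that the conservative form produces, instead of bounding all remainders by $|F|\,|\bu^{n+1}_K-\bu^n_K|^2$-type quantities.
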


Due to the choice of the stabilisation term \eqref{eqn:stab-term}, observe that the discrete mass balance \eqref{eqn:disc-mss} yields a non-linear elliptic problem for the unknown $\rk^{n+1}$. However, once $\rk^{n+1}$ is obtained, note that the velocity $\bu^{n+1}_K$ can be obtained explicitly from the discrete momentum balance \eqref{eqn:disc-mom}. In the following theorem, we establish the existence of a solution to the scheme \eqref{eqn:fv-scheme}, along with the positivity of the discrete densities. The proof follows exactly as detailed in \cite[Theorem 4.2]{AGK23}, and we refer to it for the same.

\begin{theorem}[Existence and Positivity]
  Suppose that $(\vrho^n_K, \bu^n_K)$ is known with $\vrho^n_K>0$ for each $K\in\T$. Then, there exists a solution $(\vrho^{n+1}_K, \bu^{n+1}_K)$ to the scheme \eqref{eqn:fv-scheme}, such that $\vrho^{n+1}_K>0$ for each $K\in\T$.
\end{theorem}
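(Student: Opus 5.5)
The plan is to follow the route of \cite[Theorem 4.2]{AGK23}. First eliminate the velocity unknown, so that the mass balance \eqref{eqn:disc-mss} becomes a nonlinear system for $\vrho^{n+1}=(\vrho^{n+1}_K)_{K\in\T}$ alone. Then solve that system by a topological degree argument, using a homotopy that deforms the problem to a linear, uniquely solvable one. Note that $\bfw^n_\sigma=\avg{\bu^n}_\sigma-\frac{\eta\delt}{\veps^2}(\gradd p(\vrho^{n+1}))_\sigma$ depends only on known data and on $\vrho^{n+1}$. Hence \eqref{eqn:disc-mss} is a finite-dimensional system $\mathcal{F}(\vrho^{n+1})=0$ with $\mathcal{F}$ continuous on the open set $\{\vrho_K>0\ \forall K\}$. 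Once a positive solution is found, $\bu^{n+1}_K$ is obtained explicitly from \eqref{eqn:disc-mom} by dividing by $\rk^{n+1}>0$. So existence reduces entirely to the mass equation.

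\textbf{A priori positivity.} Embed the problem in a homotopy $\mathcal{F}_\lambda$, $\lambda\in[0,1]$. In $\mathcal{F}_\lambda$ the convective velocity $\bfw^n_\sigma$ is replaced by $\lambda\bfw^n_\sigma$, while the diffusive term $-\diff{\vrho^{n+1}}_\sigma$ and the time derivative are kept. Let $K_0$ be the cell where $\vrho_{K}$ attains its minimum. Using the splitting \eqref{eqn:mss-flx-split}, write $\sum_\sigma |\sigma| F_{\sigma,K_0}$ as $\vrho_{K_0}\sum_\sigma|\sigma|((\lambda w)_++1)+\sum_\sigma|\sigma|\vrho_L((\lambda w)_--1)$. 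Summing over all cells, the fluxes cancel, which gives mass conservation $\sum_K|K|\vrho_K=\sum_K|K|\vrho^n_K=:M>0$. For a lower bound, suppose some $\vrho_{K}\le0$ and take $K_0$ with the most negative value. Then $\vrho_{K_0}\le \vrho_L$ and $(\lambda w)_--1<0$, so
\[\sum_\sigma|\sigma|F_{\sigma,K_0}\ \ge\ \vrho_{K_0}\sum_\sigma|\sigma|\bigl((\lambda w)_++1+(\lambda w)_--1\bigr)=\vrho_{K_0}\sum_\sigma|\sigma|\,\lambda w_{\sigma,K_0}.\]
This does not have a fixed sign. The cleaner argument in \cite{AGK23} is to show that the solution cannot touch zero: if $\vrho_{K_0}=0$ at the minimum, then every term is $\ge0$ and the equation forces $\vrho^n_{K_0}\le 0$, a contradiction. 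A continuity-in-$\lambda$ argument therefore keeps all solutions strictly positive. Because $p(\vrho)=\vrho^\gamma$ must be evaluated at possibly negative arguments along the homotopy, I would extend it by $p(\vrho)=|\vrho|^\gamma$ or truncate it at zero. This makes $\mathcal{F}_\lambda$ defined on all of $\R^{|\T|}$.

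\textbf{Bounds and degree.} Positivity together with mass conservation gives $0<\vrho_K\le M/\min_K|K|$. The main obstacle is the lower bound: I need a uniform $\delta>0$ with $\vrho_K\ge\delta$ for all solutions at every $\lambda$. To obtain it, I would argue by contradiction with compactness. Suppose $\lambda_j$ and solutions $\vrho^{(j)}$ satisfy $\min_K\vrho^{(j)}_K\to0$. Extract convergent subsequences; the limit solves $\mathcal{F}_{\lambda^\ast}=0$ with some component equal to zero, which contradicts the touching argument above. Then $\mathcal{F}_\lambda$ has no zeros on the boundary of $V=\{\delta/2<\vrho_K<2M/\min_K|K|\}$. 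At $\lambda=0$ the system is linear: it is the identity plus $\delt$ times a graph Laplacian, which is an M-matrix, so it has a unique solution, and that solution lies in $V$. Its degree is therefore $\pm1$, and homotopy invariance yields a solution at $\lambda=1$ in $V$, hence positive. This completes existence; computing $\bu^{n+1}$ from \eqref{eqn:disc-mom} finishes the proof.
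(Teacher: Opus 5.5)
Your proof is correct and follows essentially the topological-degree argument that the paper defers to (\cite[Theorem 4.2]{AGK23}): touching zero is ruled out for nonnegative solutions, mass conservation gives the upper bound, and a homotopy connects the problem to a uniquely solvable linear (M-matrix) problem at $\lambda=0$. Two remarks in your first paragraph are loose: the aborted flux inequality should read $\le$ rather than $\ge$, and the claim that continuity in $\lambda$ keeps \emph{all} solutions positive is not justified as stated. Neither affects the proof, because the degree argument on $V$ only needs bounds for solutions lying in $\overline{V}$, and your compactness-plus-touching argument supplies exactly those.
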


\subsection{Consistency}
\label{subsec:cons}

For the ease of exposition, we introduce the following notations. We set
\begin{equation}
  \vrho_{h,\delt,\veps} = \sum_{n = 0}^{N-1}\sum_{K\in\T}\vrho^n_K 1_K 1_{[t^n, t^{n+1})},\quad \bu_{h,\delt,\veps} = \sum_{n = 0}^{N-1}\sum_{K\in\T}\bu^n_K 1_K 1_{[t^n, t^{n+1})}.
\end{equation}

To prove the consistency property of the present scheme, we need to assume uniform bounds on the discrete functions $\vrho_{h,\delt,\veps}$ and $\bu_{h,\delt,\veps}$.  

\begin{hypothesis}
\label{hyp:bnd}
  There exist constants $0<\underline{\vrho}\leq\overline{\vrho}$ and $\overline{u} > 0$ that are independent of the mesh parameters and $\veps$ such that 
  \begin{equation}
  \label{eqn:bnd-hyp}
    0<\underline{\vrho}\leq\vrho_{h,\delt,\veps}\leq\overline{\vrho},\quad \lvert\bu_{h,\delt,\veps}\rvert\leq\overline{u}.
  \end{equation}
\end{hypothesis}

We now state the consistency of the scheme \eqref{eqn:fv-scheme} and postpone the proof to the appendix.

\begin{theorem}[Consistency]
\label{thm:cons}
  Assume that the conditions required for Theorem \ref{thm:eng-stab} and Hypothesis \ref{hyp:bnd} hold true. Then, the numerical solutions generated by the scheme are consistent with the Euler system \eqref{eqn:eul-sys}, i.e.\
  \begin{equation}
  \label{eqn:cons-mass}
    \biggl\lbrack\int\limits_\Omega\vrho_{h,\delt,\veps}(t,\cdot)\varphi(t,\cdot)\,\dbx\biggr\rbrack_{t=0}^{t=\tau} = \lint\lbrack \vrho_{h,\delt,\veps}\Dt\varphi + \vrho_{h,\delt,\veps}\bu_{h,\delt,\veps}\cdot\grad\varphi\rbrack\dbx\,\dt + \mathcal{S}^{mass}_{h,\delt},
  \end{equation}
  for any $\varphi\in W^{2,\infty}(\odom)$ and $\tau\in(0,T)$; 
  \begin{equation}
  \label{eqn:cons-mom}
    \begin{split}
      \biggl\lbrack\int\limits_\Omega\vrho_{h,\delt,\veps}\bu_{h,\delt,\veps}(t,\cdot)\cdot\uphi(t,\cdot)\,\dbx\biggr\rbrack_{t=0}^{t=\tau} = \lint\Bigl\lbrack\vrho_{h,\delt,\veps}\bu_{h,\delt,\veps}\cdot\Dt\uphi + \vrho_{h,\delt,\veps}(\bu_{h,\delt,\veps}\otimes \bu_{h,\delt,\veps})\colon\grad\uphi \\
      + \frac{1}{\veps^2}p_{h,\delt,\veps}\,\Div\uphi\Bigr\rbrack\,\dbx\,\dt + \mathcal{S}^{mom}_{h,\delt},
    \end{split}
  \end{equation}
  for any $\uphi\in W^{2,\infty}(\odom;\R^d)$ and $\tau\in (0,T)$.
  
  The consistency errors are such that 
  \begin{equation}
  \label{eqn:cons-err}
    \begin{split}
    \lvert \mathcal{S}^{mass}_{h,\delt}\rvert\lesssim C_\varphi (\sqrt{\delt} + \sqrt{\color{black}h\color{black}}), \       \lvert\mathcal{S}^{mom}_{h,\delt}\rvert\lesssim C_{\uphi} (\sqrt{\delt} + \sqrt{\color{black}h\color{black}}) , \text{ for } \veps=1 \\
      \lvert \mathcal{S}^{mass}_{h,\delt}\rvert\lesssim C_\varphi (1+\veps)(\sqrt{\delt} + \sqrt{\color{black}h\color{black}}), \       \lvert\mathcal{S}^{mom}_{h,\delt}\rvert\lesssim C_{\uphi} (1+\veps)(\sqrt{\delt} + \sqrt{\color{black}h\color{black}}), \text{ for } \veps<1
    \end{split}
  \end{equation}
  for constants $C_\varphi, C_{\uphi}>0$ that are only dependent on the test functions.
\end{theorem}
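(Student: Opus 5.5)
We follow the standard route for consistency of finite volume schemes of this type (cf. \cite{FLM+21a, AGK23}). We multiply the discrete mass balance \eqref{eqn:disc-mss} by $\delt\,\absk\,(\Pi_\T\varphi(t^n))_K$, or equivalently by the cell averages of $\varphi$ on $K$ at time $t^{n+1}$, and sum over $K\in\T$ and $0\le n\le \lfloor\tau/\delt\rfloor$. We then compare the result term by term with the continuous weak form \eqref{eqn:cons-mass}.

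\textbf{Time derivative.} A discrete summation by parts in time turns the time difference into $\bigl[\int_\Omega \vrho_{h,\delt,\veps}\varphi\bigr]_0^\tau$ minus $\int\!\!\int\vrho_{h,\delt,\veps}\Dt\varphi$. The remainder comes from the mismatch between $t^n$ and $t^{n+1}$ and between $\tau$ and the grid point. Using Hypothesis \ref{hyp:bnd} and $\varphi\in W^{2,\infty}$, this remainder is $O(\delt)$.

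\textbf{Convective flux.} We use conservativity, $F_{\sigma,K}=-F_{\sigma,L}$, and rewrite $\sum_K\sum_{\sigma}\abssig F_{\sigma,K}\varphi_K$ as $-\sum_{\sigma}\abssig F_{\sigma,K}\diff{\varphi}_\sigma$. We compare this with $\int\vrho\bu\cdot\grad\varphi=\sum_K\sum_\sigma\int_\sigma \vrho_K\bu_K\cdot\nuk\,\varphi$. The flux is decomposed into four pieces:
\begin{enumerate}
\item the centred part $\avg{\vrho}\avg{\bu}\cdot\nuk$;
\item the upwinding diffusion $\tfrac12|w_{\sigma,K}|\diff{\vrho}$;
\item the artificial diffusion $\diff{\vrho^{n+1}}_\sigma$;
\item the stabilisation $\vrho_{up}\,\du_\sigma\cdot\nuk$.
\end{enumerate}
The centred part differs from the continuous term by $h\|\varphi\|_{W^{2,\infty}}$ times bounded quantities. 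The diffusive pieces are estimated by Cauchy--Schwarz against $\sum_\sigma\abssig\diff{\varphi}^2\lesssim h\|\grad\varphi\|^2_\infty$, combined with the numerical dissipation bound
\[
\sum_n\delt\sum_\sigma\abssig(1+|w_{\sigma,K}|)\diff{\vrho^{n+1}}_\sigma^2\lesssim 1 .
\]
This dissipation bound is extracted from the energy (entropy) inequality \eqref{eqn:loc-ent-ineq}: the proof of Theorem \ref{thm:eng-stab} yields these dissipation terms as a by-product, and the bounds $\underline\vrho\le\vrho\le\overline\vrho$ make $P''$ comparable to $1$. This is what produces the $\sqrt{h}$ rate.

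\textbf{Time lag and stabilisation.} The lag between $\vrho^{n+1}$ and $\vrho^n$, and between $\bu^n$ and $\bu^{n+1}$, is controlled by the dissipation terms
\[
\sum\absk\,\vrho\,|\bu^{n+1}-\bu^n|^2\quad\text{and}\quad \sum\absk\,\Pi(\vrho^{n+1}|\vrho^n)/\veps^2,
\]
which come from the same energy estimate, giving $\sqrt{\delt}$. For the stabilisation we have $\du=\eta\delt\veps^{-2}\gradd p^{n+1}$, and the energy identity provides a bound on $\sum\delt\,\eta\delt\veps^{-2}\sum_\sigma|\dsig||(\gradd p)_\sigma|^2$. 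By Cauchy--Schwarz this gives
\[
\Bigl|\sum\delt\sum_\sigma\abssig\vrho_{up}\,\du\cdot\nuk\,\diff{\varphi}\Bigr|\lesssim\sqrt{\delt}\,\|\grad\varphi\|_\infty .
\]
The energy is itself uniformly bounded by the initial energy, which for $\veps<1$ is bounded thanks to well-prepared data. This is presumably where the $(1+\veps)$ factor enters.

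\textbf{Momentum equation.} We proceed analogously, testing with $\uphi$. The upwind momentum flux $F\bu_{up}$ is treated like the mass flux, with the extra velocity diffusion $\diff{\bu^n}$ handled by the same Cauchy--Schwarz argument. For the pressure we use the discrete duality
\[
\sum_K\absk(\gradt p)_K\cdot\uphi_K=-\sum_K\absk\,p_K(\divt\uphi)_K ,
\]
where $\divt$ is the centred discrete divergence, and $\divt\Pi_\T\uphi$ is an $O(h)$ approximation of $\Div\uphi$. The multiplier $1/\veps^2$ is a concern here. We keep it in the identity exactly, as it appears in \eqref{eqn:cons-mom}, and only its $O(h)$ remainder must be bounded. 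For that, one subtracts the constant $p(1)$, which integrates to zero against $\divt$, and uses $\|p-p(1)\|_{L^2}/\veps^2\lesssim\veps^{-1}\sqrt{E}$.

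\textbf{Main obstacle.} The hardest step is making every dissipation bound uniform in $\veps$, notably the pressure remainder and the stabilisation term, which carry $\veps^{-2}$. I would try first to bound these through the relative entropy \eqref{eqn:loc-ent-ineq} with data well-prepared in the sense of Definition \ref{def:well-prep}, so that $\veps^{-2}\Pi(\vrho^0|1)\lesssim\veps^2$. This balance would have to be tracked carefully to arrive at the stated $(1+\veps)$ dependence.
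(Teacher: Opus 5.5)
Your treatment of the mass balance, and of the momentum balance for $\veps=1$, is essentially the paper's argument. Instead of splitting the flux into centred and upwind-diffusion parts, the paper regards $\vrho^{n+1}_{\sigma,up}$ and $\avg{\bu^n}_\sigma$ as dual-mesh reconstructions. It then uses $\norm{\mathcal{R}_\T q-q}_{L^2}\lesssim h\norm{\gradd q}_{L^2}$ with the jump bounds \eqref{APP:den-est}--\eqref{APP:vel-est}, which is the same Cauchy--Schwarz mechanism.

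\textbf{The gap: the pressure residual for $\veps<1$.} You flag this term but do not resolve it. Your remainder $\veps^{-2}\int(p-p(1))(\divt\uphi-\Div\uphi)$ is only bounded by $h\,\veps^{-2}\norm{p-p(1)}_{L^2}$. The proposed fix via well-prepared data fails. Well-preparedness makes only the initial \emph{internal} part $\veps^{-2}\int\Pi(\vrho^0\vert 1)$ of size $\veps^2$, whereas \eqref{eqn:loc-ent-ineq} propagates only its sum with the $O(1)$ kinetic energy. So at later times you only know $\veps^{-2}\norm{\vrho^n-1}_{L^2}^2\lesssim 1$ (Lemma \ref{lem:disc-den-conv}), and the remainder stays $O(h/\veps)$.

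The missing idea is to control the second-order pressure $\pi^{n+1}=\veps^{-2}\bigl(p^{n+1}-m(p^{n+1}_h)\bigr)$ by the \emph{stabilisation} dissipation rather than by the internal energy. Since $\du^{n+1}_\sigma=\eta\delt\veps^{-2}(\gradd p^{n+1})_\sigma$, the discrete Poincar\'e--Wirtinger inequality gives $\delt^2\norm{\pi^{n+1}}_{L^2}^2\lesssim\norm{\du^{n+1}}_{L^2}^2$. Then \eqref{APP:stab-est} yields $\norm{\pi_{h,\delt,\veps}}_{L^2L^2}\lesssim\delt^{-1/2}$ uniformly in $\veps$. Constants integrate to zero against $\divt\uphi$ and $\Div\uphi$, so the projection residual is $O(h\,\delt^{-1/2})$. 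The paper reads this as $\sqrt{h}$, which tacitly requires $h\lesssim\delt$.

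\textbf{A second $\veps$-singular term: the time shift.} This term is absent from your plan. The scheme carries $p^{n+1}$, whereas \eqref{eqn:cons-mom} is written with $p_{h,\delt,\veps}$, i.e.\ $p^n$ on $[t^n,t^{n+1})$. For $\veps=1$ the shift costs $\sqrt{\delt}$, via the mean value theorem and the bound on $\vrho^{n+1}-\vrho^n$ in \eqref{APP:den-est}. For $\veps<1$ the same estimate only gives $\veps^{-2}\sum_n\delt\norm{p^{n+1}-p^n}_{L^1}\lesssim\veps^{-1}\sqrt{\delt}$.

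The paper does not overcome this either. It keeps the pressure at level $n+1$ and uses the momentum identity only with divergence-free $\uphi$ (in Theorem \ref{thm:asymp-err-est}, $\uphi=\tv$), for which the pressure term vanishes altogether. So for $\veps<1$ the bound is really established only for solenoidal test functions, and your argument should be restricted in the same way.

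\textbf{Smaller corrections.}
\begin{enumerate}
\item The stabilisation dissipation in \eqref{eqn:eng-ineq-1} carries $\veps^{-4}$, which is what gives $\sum_n\norm{\du^{n+1}}_{L^2}^2\lesssim1$. With your $\veps^{-2}$, the $\sqrt{\delt}$ bound would not be uniform in $\veps$.
\item $\vrho^{n+1}_K\abs{\bu^{n+1}_K-\bu^n_K}^2$ is not dissipated by this scheme; it enters $S^{n+1}_{K,\delt}$ as a source. You do not need it, though: all fluxes use $\bu^n$, and summation by parts in time moves the increment onto the test function.
\item The factor $(1+\veps)$ comes from the density-jump and density-increment dissipations being $O(\veps^2)$. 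This makes the density residuals $O(\veps\sqrt h+\veps\sqrt{\delt})$; it does not come from the size of the initial energy.
\end{enumerate}
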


\section{Error Estimates}
\label{sec:err-est}

The goal of this section is to derive uniform error estimates that will yield the convergence rates of the numerical solutions $(\vrho_{h,\delt,\veps}, \bu_{h,\delt,\veps})$ generated by the scheme \eqref{eqn:fv-scheme} for the following two cases. In the first case, we fix $\veps$ and estimate the difference between the numerical solutions and a strong solution $(\tvrho, \tu)$ of the compressible Euler system \eqref{eqn:eul-sys}. The estimates obtained in this case are uniform with respect to the mesh parameters $h, \delt$, and will yield the convergence of numerical solutions towards the strong solution of the compressible Euler system as the mesh parameters vanish. In the second case, we consider a strong solution $\tv$ of the incompressible Euler system, and we proceed to derive the asymptotic error estimates that are uniform in $\veps, h$ and $\delt$. These estimates will, in turn, guarantee the convergence of the discrete solutions towards the strong solution of the incompressible system as $\veps$, and the mesh parameters simultaneously go to zero.

\subsection{Error Estimates in the Compressible Regime}
\label{subsec:comp-err-est}

In this section, we detail the error estimates obtained between the numerical solutions $(\vrho_{h,\delt,\veps}, \bu_{h,\delt,\veps})$ and a strong solution $(\tvrho, \tu)$ of the compressible Euler system \eqref{eqn:eul-sys} for a \color{black}finite $\veps>0$. Consequently, without loss of generality, we consider $\veps = 1$ \color{black}and denote $\vrho_{h,\delt, 1} = \vrho_{h,\delt}$, and analogously define $\bu_{h,\delt}$. To begin with, we introduce the relative energy between the numerical solution and the strong solution via
\begin{equation}
\label{eqn:rel-eng}
  E_{rel}(\vrho_{h,\delt}, \bu_{h,\delt}\vert \tvrho, \tu) = \half\vrho_{h,\delt}\lvert\bu_{h,\delt} - \tu\rvert^2 + P(\vrho_{h,\delt}) - P(\tvrho) - P^\prime(\tvrho)(\vrho_{h,\delt} - \tvrho).
\end{equation}

Observe that since $P$ is a convex function, cf.\ \eqref{eqn:pres-pot}, the relative energy is non-negative. Next, we recall that the existence of a strong solution to the Euler system is given by Theorem \ref{thm:strong-soln} and Corollary \ref{cor:strong-soln}, provided the initial datum is sufficiently regular. In particular, if $(\tvrho_0, \tu_0)\in H^s(\Omega;\R^{d+1})$ where $s>d/2 + 3$, then the strong solution will have the regularity $(\tvrho, \tu)\in W^{2,\infty}(\odom; \R^{d+1})$; cf.\ Corollary \ref{cor:strong-soln}.   

In view of Hypothesis \ref{hyp:bnd}, we have the following straightforward equivalence.
\begin{lemma}
\label{lem:rel-eng-equiv}
  Let $(\widetilde{\varrho},\widetilde{\bu})$ be the strong solution of the Euler system \eqref{eqn:eul-sys}, and let $(\varrho_{h,\Delta t},\bu_{h,\Delta t})$ be the numerical solution obtained by the scheme \eqref{eqn:fv-scheme}. Assuming Hypothesis \ref{hyp:bnd}, we have the following equivalence
  \begin{equation}
    \label{eqn:comp-equiv}
    E_{rel}(\varrho_{h,\Delta t},\bu_{h,\Delta t}|\widetilde{\varrho},\widetilde{\bu}) \approx \modL{\varrho_{h,\Delta t} - \widetilde{\varrho}}^2 + \modL{\varrho_{h,\Delta t} \bu_{h,\Delta t} - \widetilde{\varrho}\widetilde{\bu}}^2.
  \end{equation}
\end{lemma}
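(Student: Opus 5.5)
The equivalence is pointwise in $(t,x)$, with constants depending only on $\underline{\vrho},\overline{\vrho},\overline{u},\gamma$ and on the bounds of the strong solution. I will treat the pressure part and the kinetic part separately.

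The first step is to fix compact ranges. By Hypothesis \ref{hyp:bnd}, $\vrho_{h,\delt}\in[\underline{\vrho},\overline{\vrho}]$ and $\abs{\bu_{h,\delt}}\le\overline{u}$. Since $\tvrho>0$ is continuous on the compact set $[0,T]\times\overline\Omega$ and $(\tvrho,\tu)\in W^{2,\infty}$ by Corollary \ref{cor:strong-soln}, there are constants $0<\underline{r}\le\tvrho\le\overline{r}$ and $\abs{\tu}\le\overline{U}$. If necessary I shrink $\underline{\vrho}$ and enlarge $\overline{\vrho}$ so that both densities lie in a common interval $I=[a,b]\subset(0,\infty)$.

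The second step handles the pressure part. Taylor's formula with integral remainder gives
\[
\Pi(\vrho_{h,\delt}\vert\tvrho)=\Bigl(\int_0^1(1-s)P''\bigl(\tvrho+s(\vrho_{h,\delt}-\tvrho)\bigr)\,\mathrm{d}s\Bigr)(\vrho_{h,\delt}-\tvrho)^2 .
\]
Here $P''(z)=\gamma z^{\gamma-2}$ is continuous and positive on $I$, so it is bounded above and below by positive constants there. Hence $\Pi(\vrho_{h,\delt}\vert\tvrho)\approx\abs{\vrho_{h,\delt}-\tvrho}^2$.

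The third step handles the kinetic part. Since $\vrho_{h,\delt}\in[a,b]$, we have $\half\vrho_{h,\delt}\abs{\bu_{h,\delt}-\tu}^2\approx\abs{\bu_{h,\delt}-\tu}^2$. The identity
\[
\vrho_{h,\delt}\bu_{h,\delt}-\tvrho\tu=\vrho_{h,\delt}(\bu_{h,\delt}-\tu)+(\vrho_{h,\delt}-\tvrho)\tu
\]
together with $(x+y)^2\le 2x^2+2y^2$ gives
\[
\abs{\vrho_{h,\delt}\bu_{h,\delt}-\tvrho\tu}^2\lesssim\abs{\bu_{h,\delt}-\tu}^2+\abs{\vrho_{h,\delt}-\tvrho}^2 .
\]
Conversely, $\bu_{h,\delt}-\tu=\vrho_{h,\delt}^{-1}\bigl[(\vrho_{h,\delt}\bu_{h,\delt}-\tvrho\tu)-(\vrho_{h,\delt}-\tvrho)\tu\bigr]$ together with $\vrho_{h,\delt}\ge a$ gives
\[
\abs{\bu_{h,\delt}-\tu}^2\lesssim\abs{\vrho_{h,\delt}\bu_{h,\delt}-\tvrho\tu}^2+\abs{\vrho_{h,\delt}-\tvrho}^2 .
\]
Therefore $\abs{\vrho_{h,\delt}-\tvrho}^2+\abs{\bu_{h,\delt}-\tu}^2\approx\abs{\vrho_{h,\delt}-\tvrho}^2+\abs{\vrho_{h,\delt}\bu_{h,\delt}-\tvrho\tu}^2$. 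Adding the estimates from the second and third steps proves \eqref{eqn:comp-equiv}.

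The argument is elementary, and the only real care needed is in the first step. The uniform positive lower bound on $\tvrho$ has to come from the strong solution itself, via positivity and continuity on a compact set, since Hypothesis \ref{hyp:bnd} controls only the discrete solution. The uniform lower bound on $\vrho_{h,\delt}$ from Hypothesis \ref{hyp:bnd} is what makes $P''$ bounded below and allows division by $\vrho_{h,\delt}$.
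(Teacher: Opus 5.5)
Your proof is correct, but it is not the route the paper takes. The paper gives no argument of its own and simply refers to Lemma~2.7 of \cite{LSY22}, which is formulated there for the full Euler system.

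The argument behind that reference is, as far as I recall, the standard convexity one. The relative energy is the Bregman distance of the total energy $E(\vrho,\boldsymbol{m})=\abs{\boldsymbol{m}}^2/(2\vrho)+P(\vrho)$ in the conservative variables $(\vrho,\boldsymbol{m})$, with $\boldsymbol{m}=\vrho\bu$. Taylor's formula with two-sided bounds on $\nabla^2E$ over the compact set of admissible states then gives the equivalence with $\abs{\vrho-\tvrho}^2+\abs{\boldsymbol{m}-\tvrho\tu}^2$.

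You instead use the barotropic splitting. The kinetic part $\half\vrho\abs{\bu-\tu}^2$ is handled exactly, through the algebraic identity linking $\bu-\tu$ to $\vrho\bu-\tvrho\tu$. Only the scalar term $\Pi(\vrho\vert\tvrho)$ needs a Taylor expansion.

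The convexity route buys generality: it carries over essentially unchanged to the full Euler system or any strictly convex energy. Your route buys explicit constants and a slightly weaker hypothesis. At an intermediate state the Hessian form is $\vrho_s^{-1}\abs{\delta\boldsymbol{m}-\bu_s\,\delta\vrho}^2+P''(\vrho_s)\,\delta\vrho^2$, with $\bu_s=\boldsymbol{m}_s/\vrho_s$. Its bounds depend on $\abs{\bu_s}$, so that route needs the numerical velocity bound $\abs{\bu_{h,\delt}}\le\overline{u}$. Your argument never uses it: only the two-sided density bounds on the numerical solution and the bounds on $(\tvrho,\tu)$ enter.

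You are also right that the lower bound on $\tvrho$ must come from the strong solution, since Hypothesis~\ref{hyp:bnd} concerns only the discrete solution. Positivity of $\tvrho$ is propagated along particle paths by the continuity equation.
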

Consequently, for $\tau\in(0,T)$, integrating \eqref{eqn:comp-equiv} over $\Omega$ yields
\begin{equation}
\label{eqn:comp-equiv-2}
  \begin{split}
    \int\limits_{\Omega}E_{rel}(\varrho_{h,\Delta t},\bu_{h,\Delta t}|\widetilde{\varrho},\widetilde{\bu})(\tau,\cdot)\,\dbx \approx &\norm{\vrho_{h,\delt}(\tau,\cdot) - \tvrho(\tau,\cdot)}_{L^2(\Omega)}^2 \\
    &+ \norm{\vrho_{h,\delt}\bu_{h,\delt}(\tau,\cdot) - \tvrho\tu(\tau,\cdot)}_{L^2(\Omega;\R^d)}^2 
  \end{split}
\end{equation}
\color{black}\begin{proof}
    See the proof of Lemma 2.7 in \cite{LSY22}. 
\end{proof}\color{black}
Thus, having control on the relative energy on the left-hand side of the above equation will give us control on the norms of the differences between the numerical and strong solutions. This will further yield the convergence of the numerical solutions towards the strong solutions and provide us with the rate of convergence in the $L^2$-norm. To this end, we aim to establish a discrete variant of the relative energy inequality, which will in turn allow us to obtain a control on the total relative energy; see e.g.\ \cite{BLM+23, FLS23, LSY22} and the references therein.

We are now ready to state and prove the first main result of this paper. 

\begin{theorem}[Error Estimates]
\label{thm:err-est}
  Let the initial data $(\tvrho_0,\tu_0)\in H^s(\Omega;\R^{d+1})$, where $s> d/2 +3$. Let $(\tvrho,\tu)\in W^{2,\infty}(\odom;\R^{d+1})$ be the strong solution emanating from $(\tvrho_0,\tu_0)$, where $T\leq T_{max}$. Let the finite volume scheme \eqref{eqn:fv-scheme} be initialised as 
  \begin{equation}
  \label{eqn:ini-proj}
    \vrho^0_K = (\Pi_\T \tvrho_0)_K,\quad \bu^0_K = (\Pi_\T \tu_0)_K,\ K\in\T,
  \end{equation}
  and let $(\vrho_{h,\delt}, \bu_{h,\delt})$ denote the corresponding numerical  solution. Finally, let the assumptions of Theorem \ref{thm:cons} hold true. Then, for any $\tau\in (0,T)$, there exists a constant 
  \[
    C = C(\norm{\tvrho}_{W^{2,\infty}}, \norm{\tu}_{W^{2,\infty}}) > 0 
  \]
  such that
  \begin{equation}
  \label{eqn:err-est}
    \int\limits_\Omega E_{rel}(\vrho_{h,\delt},\bu_{h,\delt}\,\vert\, \tvrho,\tu)(\tau,\cdot)\,\dbx\lesssim C(\sqrt{h _\T} + \sqrt{\delt}).
  \end{equation}
\end{theorem}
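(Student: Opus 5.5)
The plan is the standard relative energy argument at the discrete level, as in \cite{LSY22, FLS23}. First, expand the integrated relative energy at time $\tau$:
\[
\int_\Omega E_{rel}(\vrho_{h,\delt},\bu_{h,\delt}\vert\tvrho,\tu)\,\dbx = \int_\Omega \Bigl(\half\vrho_{h,\delt}\abs{\bu_{h,\delt}}^2 + P(\vrho_{h,\delt})\Bigr)\dbx - \int_\Omega \vrho_{h,\delt}\bu_{h,\delt}\cdot\tu\,\dbx + \int_\Omega \half\vrho_{h,\delt}\abs{\tu}^2\dbx - \int_\Omega P'(\tvrho)\vrho_{h,\delt}\,\dbx + \int_\Omega p(\tvrho)\,\dbx,
\]
using \eqref{eqn:pres-pot-de}. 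Then treat each piece separately:
\begin{itemize}
\item The total energy term is bounded by its initial value, by the energy inequality \eqref{eqn:loc-eng-ineq} of Theorem \ref{thm:eng-stab} summed over $n$.
\item The term $-\int\vrho_{h,\delt}\bu_{h,\delt}\cdot\tu$ is handled by the consistency formula \eqref{eqn:cons-mom} with $\uphi=\tu$.
\item The terms $\half\int\vrho_{h,\delt}\abs{\tu}^2$ and $-\int P'(\tvrho)\vrho_{h,\delt}$ are handled by the mass consistency \eqref{eqn:cons-mass} with test functions $\varphi=\half\abs{\tu}^2$ and $\varphi=P'(\tvrho)$.
\end{itemize}
All of these test functions lie in $W^{2,\infty}$ by Corollary \ref{cor:strong-soln}. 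The remainders $\mathcal{S}^{mass}_{h,\delt}$ and $\mathcal{S}^{mom}_{h,\delt}$ are then $\lesssim C(\sqrt{h}+\sqrt{\delt})$ by Theorem \ref{thm:cons} with $\veps=1$. This yields
\[
\int_\Omega E_{rel}(\tau)\,\dbx \le \int_\Omega E_{rel}(0)\,\dbx + \int_0^\tau\!\!\int_\Omega \mathcal{R}\,\dbx\,\dt + C(\sqrt h+\sqrt\delt),
\]
where $\mathcal{R}$ collects the space-time integrals produced by the consistency formulas.

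Next, rewrite $\mathcal{R}$ in the continuous form familiar from weak--strong uniqueness. Use that $(\tvrho,\tu)$ satisfies \eqref{eqn:eul-sys} pointwise; in particular $\Dt\tu+\tu\cdot\grad\tu+\grad P'(\tvrho)=0$ and $\Dt p(\tvrho)+\Div(p(\tvrho)\tu) = -(\gamma-1)p(\tvrho)\Div\tu$ type identities. With these, replace $\Dt\tu$ and $\Dt P'(\tvrho)$ and regroup. The result is
\[
\mathcal{R} = -\vrho_{h,\delt}(\bu_{h,\delt}-\tu)\otimes(\bu_{h,\delt}-\tu):\grad\tu - \bigl(p(\vrho_{h,\delt})-p(\tvrho)-p'(\tvrho)(\vrho_{h,\delt}-\tvrho)\bigr)\Div\tu + \bigl(\tfrac{\vrho_{h,\delt}}{\tvrho}-1\bigr)(\bu_{h,\delt}-\tu)\cdot\grad p(\tvrho)\cdot(\ldots).
\]
The last term has the usual structure $(\tvrho-\vrho_{h,\delt})(\ldots)\cdot(\bu_{h,\delt}-\tu)$ multiplied by derivatives of the strong solution. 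Under Hypothesis \ref{hyp:bnd} the pressure remainder satisfies $\abs{p(\vrho)-p(\tvrho)-p'(\tvrho)(\vrho-\tvrho)}\lesssim \Pi(\vrho\vert\tvrho)$. Together with Young's inequality and Lemma \ref{lem:rel-eng-equiv}, this gives
\[
\abs{\mathcal{R}}\lesssim \bigl(\norm{\tu}_{W^{1,\infty}}+\norm{\tvrho}_{W^{1,\infty}}\bigr)E_{rel}.
\]

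For the initial term, use the projection initialisation \eqref{eqn:ini-proj}, the estimate \eqref{eqn:proj-op-est} and Lemma \ref{lem:rel-eng-equiv}. These give $\int_\Omega E_{rel}(0)\,\dbx\lesssim h^2$, which is dominated by $\sqrt h$ since $h<1$.

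Finally, apply Gronwall's lemma on $(0,\tau)$. This gives \eqref{eqn:err-est} with $C$ depending on $\norm{\tvrho}_{W^{2,\infty}}$ and $\norm{\tu}_{W^{2,\infty}}$, through $\exp(cT)$ and the consistency constants. A technicality is that the numerical solution is piecewise constant in time. Hence the Gronwall argument should be run on the discrete times $t^n$, or applied to the integral inequality for a.e. $\tau$. The gap between $\tau$ and the nearest $t^n$ is absorbed using the $O(\delt)$ time regularity of the test functions.

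The main obstacle is the algebraic reorganisation of $\mathcal{R}$. This means checking that every term which is not quadratic in the errors cancels exactly thanks to the strong equations. In particular, the pressure term $p_{h,\delt}\Div\tu$ from \eqref{eqn:cons-mom} must combine with $\vrho_{h,\delt}\bu_{h,\delt}\cdot\grad P'(\tvrho)$ and $\vrho_{h,\delt}\Dt P'(\tvrho)$ to form the relative pressure. Beyond that, one must keep track that all consistency remainders are genuinely $O(\sqrt h+\sqrt\delt)$ with constants depending only on the $W^{2,\infty}$ norms.
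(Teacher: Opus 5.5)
Your proposal is correct and follows the paper's proof essentially step by step: you rewrite $E_{rel}$ using $zP'(z)-P(z)=p(z)$ and drop the total-energy increment via the discrete energy inequality. You then test the consistency formulas with $\frac12\abs{\tu}^2-P'(\tvrho)$ (split by linearity) and with $\tu$, reduce to the convective quadratic term and the relative pressure times $\Div\tu$, and close with the $h^2$ initial bound and Gronwall.

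The only inaccuracy is your hedged third term in $\mathcal{R}$, which in fact vanishes identically. The factor $1-\vrho_{h,\delt}/\tvrho$ ends up multiplying $\Dt\tvrho+\tu\cdot\grad\tvrho=-\tvrho\Div\tu$ rather than $\bu_{h,\delt}-\tu$, and is absorbed into the relative pressure, so $\abs{\mathcal{R}}\lesssim\norm{\grad\tu}_{L^\infty}E_{rel}$ directly. Your Young-inequality bound would have covered such a term anyway, so the argument is unaffected.
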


\begin{proof}
  The proof can be split into two steps:
  \begin{itemize}
    \item Firstly, we take suitable combinations of the strong solution $\paraL{\widetilde{\varrho},\widetilde{\bu}}$ as test functions in the consistency formulation \eqref{eqn:cons-mass}-\eqref{eqn:cons-mom}, and derive the relative energy inequality between the numerical solution $\paraL{\varrho_{h,\Delta t},\bu_{h,\Delta t}}$ and the strong $\paraL{\widetilde{\varrho},\widetilde{\bu}}$. 
    
    \item Secondly, we estimate the right-hand side of the resulting relative energy inequality such that all the terms on the right-hand side are bounded by the discretisation parameters $h, \Delta t$, or by the relative energy itself. Finally, we invoke the \textit{Gronwall's lemma} to obtain the estimate \eqref{eqn:err-est}.
    \end{itemize}

  \textit{Step 1:} Since the initial data $(\tvrho_0,\tu_0)\in H^s(\Omega;\R^{d+1})$, where $s> d/2 + 3$, Corollary \ref{cor:strong-soln} guarantees the existence of $(\tvrho,\tu)\in W^{2,\infty}(\odom;\R^{d+1})$ such that 
  \begin{subequations}
  \label{eqn:strong-eul-sol}
    \begin{gather}
      \Dt \tvrho + \Div(\tvrho\tu) = 0, \\
      \Dt(\tvrho\tu) + \Div(\tvrho\tu\otimes\tu) + \grad p(\tvrho) = 0.
    \end{gather}
  \end{subequations}
  In particular, we see that
  \begin{equation}
  \label{eqn:strong-vel}
    \Dt\tu + (\tu\cdot\grad)\tu + \frac{1}{\tvrho}\grad p(\tvrho) = 0.
  \end{equation}
  Next, since the scheme is initialised using the initial data $(\tvrho_0, \tu_0)$, cf.\ \eqref{eqn:ini-proj}, observe that as a consequence of \eqref{eqn:proj-op-est} and \eqref{eqn:comp-equiv-2}, we get
  \begin{equation}
  \label{eqn:err-est-ini}
    \int\limits_{\Omega}E_{rel}(\vrho_{h,\delt},\bu_{h,\delt}\,\vert\,\tvrho, \tu)(0,\cdot)\,\dbx \lesssim h^2.
  \end{equation}

  We now proceed to establish the relative energy inequality. Firstly, we rewrite the relative energy \eqref{eqn:rel-eng} in a more convenient form as 
  \begin{equation}
  \label{eqn:err-est-1}
    \begin{split}
      E_{rel}(\vrho_{h,\delt},\bu_{h,\delt}\,\vert\,\tvrho, \tu) = &\Bigl(\half\vrho_{h,\delt}\lvert\bu_{h,\delt}\rvert^2 + P(\vrho_{h,\delt})\Bigr) + \Bigl(\frac{1}{2} \lvert{\tu}\rvert^2  -  P'(\tvrho)\Bigr)\varrho_{h,\Delta t} \\
      &- \varrho_{h,\Delta t}\bu_{h,\Delta t}\cdot \tu + p(\widetilde{\varrho}).
    \end{split}
  \end{equation}
  Next, for $\tau\in(0,T)$, we calculate the time increments $[\int_{\Omega} E(\varrho_{h,\Delta t},\bu_{h,\Delta t}|\widetilde{\varrho},\widetilde{\bu}) \dbx]_{t=0}^{t=\tau}$. From \eqref{eqn:err-est-1}, we get 
 
  \begin{equation}
  \label{eqn:err-est-2}
    \begin{split}
      \left[ \int_{\Omega} E_{rel}(\varrho_{h,\delt},\bu_{h,\delt}|\widetilde{\varrho},\widetilde{\bu}) \dbx \right]_{t=0}^{t=\tau} = &\left[ \int_{\Omega}\biggl(\half\vrho_{h,\delt}\abs{\bu_{h,\delt}}^2 + P(\vrho_{h,\delt})\biggr)\dbx\right]_{t=0}^{t = \tau} \\
      &+ \left[ \int_{\Omega} \varrho_{h,\delt} \paraL{{\frac{1}{2} \modL{\widetilde{\bu}}^2 - P'(\widetilde{\varrho})}} \dbx \right]_{t=0}^{t=\tau} \\
      &- \left[ \int_{\Omega}  \varrho_{h,\delt}\bu_{h,\delt} \cdot \widetilde{\bu} \dbx\right]_{t=0}^{t=\tau}  
      + \left[ \int_{\Omega} p\paraL{\widetilde{\varrho}} \dbx \right]_{t=0}^{t=\tau}.
    \end{split}
  \end{equation}
  Invoking the stability of the scheme, cf.\ Theorem \ref{thm:eng-stab}, observe that the first term on the right-hand side of the above equation is non-positive. Next, owing to the regularity of the strong solution $(\tvrho, \tu)$, we can take $\varphi = {\frac{1}{2} \modL{\widetilde{\bu}}^2 - P'(\widetilde{\varrho})}$ and $\boldsymbol{\varphi} = \widetilde{\bu}$ as test functions in the consistency formulation \eqref{eqn:cons-mass} of the mass and \eqref{eqn:cons-mom} of the momentum. Straightforward simplification will yield
  \begin{multline}
  \label{eqn:err-est-3}
    \left[ \int_{\Omega} E_{rel}(\varrho_{h, \delt},\bu_{h, \delt}|\widetilde{\varrho},\widetilde{\bu}) \dbx \right]_{t=0}^{t=\tau} \leq  \left[ \int_{\Omega} p\paraL{\widetilde{\varrho}}\,\dbx\right]_{t=0}^{t=\tau} + \mathcal{S}_{h,\Delta t}^{mass} - \mathcal{S}_{h,\Delta t}^{mom} \\ 
    + \int_0^{\tau} \int_{\Omega} \bigl[\varrho_{h, \delt} \paraL{\widetilde{\bu}-\bu_{h, \delt}} \cdot \partial_t \widetilde{\bu}  + \varrho_{h, \delt} \paraL{\widetilde{\bu}-\bu_{h, \delt}} \otimes \bu_{h, \delt} : \grad \widetilde{\bu}\bigr]\,\dbx\,\dt \\
    - \int_0^{\tau} \int_{\Omega} \bigl[ P''(\widetilde{\varrho}) \paraL{\varrho_{h, \delt}\partial_t \widetilde{\varrho} + \varrho_{h, \delt} \bu_{h, \delt} \cdot \grad \widetilde{\varrho}}  - p_{h, \delt} \Div \widetilde{\bu} \bigr]  \dbx\,\dt,
  \end{multline}
  with 
  \begin{gather}
    \modL{\mathcal{S}_{h,\Delta t}^{mass}} \leq  C_1\paraL{ \norm{\frac{1}{2} \modL{\widetilde{\bu}}^2 - P'(\widetilde{\varrho})}_{W^{2,\infty}\paraL{[0,\tau]\times \Omega}} }  \paraL{\sqrt{h}+\sqrt{\Delta t} }, \\ 
    \modL{\mathcal{S}_{h,\Delta t}^{mom}} \leq  \ C_2 \paraL{\norm{\widetilde{\bu}}_{W^{2,\infty}\paraL{[0,\tau]\times\Omega; \mathbb{R}^d}}}  \paraL{\sqrt{h}+\sqrt{\Delta t}},
  \end{gather}
  cf. \eqref{eqn:cons-err}.
  We proceed to handle all the terms appearing in the right-hand side of \eqref{eqn:err-est-3} separately. Firstly, note that we can write the integrand appearing in the fourth term of \eqref{eqn:err-est-3} as
  \begin{equation}
  \label{eqn:err-est-4}
    \begin{split}
      \vrho_{h,\delt}(\tu - \bu_{h,\delt})\cdot\Dt\tu + \vrho_{h,\delt}(\tu - \bu_{h,\delt})\otimes \bu_{h,\delt}\colon\grad\tu \\
      = -\frac{p^\prime(\tvrho)}{\tvrho}\vrho_{h,\delt}(\tu - \bu_{h,\delt})\cdot\grad\tvrho - \vrho_{h,\delt}(\bu_{h,\delt} - \tu)\otimes(\bu_{h,\delt} - \tu)\colon\grad\tu,
    \end{split}
  \end{equation}
  wherein we have used the fact that $(\tvrho, \tu)$ satisfy \eqref{eqn:strong-eul-sol}. Next, we write the first term in \eqref{eqn:err-est-3} as
  \begin{equation}
  \label{eqn:err-est-5}
    \Bigl\lbrack\int_{\Omega}p(\tvrho)\,\dbx\Bigr\rbrack_{t=0}^{t=\tau} = \int_0^\tau\int_{\Omega} p^\prime(\tvrho)\Dt\tvrho\,\dbx\,\dt.
  \end{equation}
  Now, noting that $\int_0^\tau\int_{\Omega}\Div(p(\tvrho)\tu)\, \dbx\,\dt = 0$, we can freely add it to \eqref{eqn:err-est-3}. Finally, we write $P^{\prime\prime}(\tvrho) = p^\prime(\tvrho)/\tvrho$ in the last term of \eqref{eqn:err-est-3}. Thus, summarising the above discussion, from \eqref{eqn:err-est-3}-\eqref{eqn:err-est-5}, we obtain 
\begin{multline}
    \left[ \int_{\Omega} E_{rel}(\varrho_{h, \delt},\bu_{h, \delt}\vert \widetilde{\varrho},\widetilde{\bu}) \dbx \right]_{t=0}^{t=\tau}  \leq - \int_{0}^{\tau} \int_{\Omega} \varrho_{h, \delt} \paraL{\bu_{h, \delt} - \widetilde{\bu}} \otimes \paraL{\bu_{h, \delt} - \widetilde{\bu}} : \grad \widetilde{\bu}\, \dbx\,\dt \\ 
    + \int_{0}^{\tau} \int_{\Omega} p^\prime(\tvrho)\biggl(1 - \frac{\vrho_{h,\delt}}{\tvrho}\biggr)\lbrack\Dt\tvrho + \tu\cdot\grad\tvrho\rbrack\,\dbx\,\dt - \int_0^\tau\int_\Omega(p_{h,\delt} - p(\tvrho))\Div\tu\,\dbx\,\dt + \mathcal{S}_{h,\Delta t}^{mass} - \mathcal{S}_{h,\Delta t}^{mom}.  
  \end{multline}

  Finally, writing $\Dt\tvrho + \tu\cdot\grad\tvrho = - \tvrho\,\Div\tu$ in the above equation, we arrive at the desired form of the relative energy inequality.
  \begin{multline}
  \label{eqn:err-est-6}
    \left[ \int_{\Omega} E_{rel}(\varrho_{h, \delt},\bu_{h, \delt}\vert \widetilde{\varrho},\widetilde{\bu}) \dbx \right]_{t=0}^{t=\tau}  \leq - \int_{0}^{\tau} \int_{\Omega} \varrho_{h, \delt} \paraL{\bu_{h, \delt} - \widetilde{\bu}} \otimes \paraL{\bu_{h, \delt} - \widetilde{\bu}} : \grad \widetilde{\bu}\, \dbx\,\dt \\ 
    - \int_{0}^{\tau} \int_{\Omega} \paraL{p(\varrho_{h, \delt}) - p'(\widetilde{\varrho})(\varrho_{h, \delt}-\widetilde{\varrho}) - p(\widetilde{\varrho})} \Div \widetilde{\bu}\,\dbx\,\dt + \mathcal{S}_{h,\Delta t}^{mass} - \mathcal{S}_{h,\Delta t}^{mom}.  
  \end{multline}
  
  \textit{Step 2:} We proceed to estimate the right-hand side of the above equation and write it in a suitable form to apply Gronwall's inequality. Noting that $\varrho_{h,\delt}$ is bounded from above due to Hypothesis \ref{hyp:bnd}, the first term of \eqref{eqn:err-est-6} can be estimated after invoking the equivalence \eqref{eqn:comp-equiv-2} as,
  \begin{gather*}
    \modL{\int_{0}^{\tau} \int_{\Omega} \varrho_{h,\delt} \paraL{\bu_{h, \delt} - \widetilde{\bu}} \otimes \paraL{\bu_{h, \delt} - \widetilde{\bu}} \colon \grad \widetilde{\bu} \dbx\dt} \\
      \lesssim \norm{\grad \widetilde{\bu}}_{L^{\infty}\paraL{[0,\tau]\times \Omega;\mathbb{R}^{d\times d}}} \int_{0}^{\tau} \int_{\Omega} E_{rel}(\varrho_{h, \delt},\bu_{h, \delt}|\widetilde{\varrho},\widetilde{\bu}) \dbx \dt. 
  \end{gather*}
  Since the second term on the right-hand side of \eqref{eqn:err-est-6} constitutes a first-order Taylor expansion, we obtain,
  \begin{multline*}
    \modL{\int_{0}^{\tau} \int_{\Omega} \paraL{p(\varrho_{h, \delt}) - p'(\widetilde{\varrho})(\varrho_{h, \delt}-\widetilde{\varrho}) - p(\widetilde{\varrho})} \Div \widetilde{\bu} \dbx\dt} \\
    \lesssim \norm{\grad \widetilde{\bu}}_{L^{\infty}\paraL{[0,\tau]\times \Omega;\mathbb{R}^{d\times d}}} \int_{0}^{\tau} \norm{\varrho_{h, \delt}-\widetilde{\varrho}}^2_{L^2\paraL{\Omega}} \dt \nonumber \\
    \lesssim \norm{\grad \widetilde{\bu}}_{L^{\infty}\paraL{[0,\tau]\times \Omega;\mathbb{R}^{d\times d}}} \int_{0}^{\tau} \int_{\Omega} E_{rel}(\varrho_{h, \delt},\bu_{h, \delt}|\widetilde{\varrho},\widetilde{\bu}) \dbx\dt. 
  \end{multline*}
  Thus, by using the bounds on consistency errors, the relative energy inequality \eqref{eqn:err-est-6} becomes,
  \begin{multline}
    \left[ \int_{\Omega} E_{rel}(\varrho_{h, \delt},\bu_{h, \delt}|\widetilde{\varrho},\widetilde{\bu}) \dbx \right]_{t=0}^{t=\tau}  
    \leq c_1\Bigg(\norm{\frac{1}{2} \modL{\widetilde{\bu}}^2 - P'(\widetilde{\varrho})}_{W^{2,\infty}\paraL{[0,\tau]\times \Omega}}, \norm{\widetilde{\bu}}_{W^{2,\infty}\paraL{[0,\tau]\times\Omega; \mathbb{R}^d}}\Bigg) \\
    \paraL{\sqrt{h}+\sqrt{\Delta t}} 
    + c_2 \paraL{\norm{\grad \widetilde{\bu}}_{L^{\infty}\paraL{[0,\tau]\times \Omega;\mathbb{R}^{d\times d}}} } \int_{0}^{\tau} \int_{\Omega} E_{rel}(\varrho_{h, \delt},\bu_{h, \delt}|\widetilde{\varrho},\widetilde{\bu}) \dbx\dt.
  \end{multline}
  Finally, invoking Gronwall's lemma and recalling the estimate on the initial relative energy \eqref{eqn:err-est-ini} will immediately yield the required estimate \eqref{eqn:err-est}.
\end{proof}

A straightforward corollary of the above result now follows as a consequence of the equivalence \eqref{eqn:comp-equiv-2}, which yields the convergence rates of the numerical solutions towards the strong solution.
\begin{corollary}
  Let the conditions given in Theorem \ref{thm:err-est} hold.  Then, for any $\tau\in(0,T)$
  \begin{align}
    \norm{\vrho_{h,\delt}(\tau,\cdot) - \tvrho(\tau,\cdot)}_{L^2(\Omega)}\lesssim h^{\frac{1}{4}} + \delt^{\frac{1}{4}}, \\
    \norm{\vrho_{h,\delt}\bu_{h,\delt}(\tau,\cdot) - \tvrho\tu(\tau,\cdot)}_{L^2(\Omega;\R^d)}\lesssim h^{\frac{1}{4}} + \delt^{\frac{1}{4}}.
  \end{align}
\end{corollary}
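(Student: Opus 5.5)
This is a direct consequence of Theorem \ref{thm:err-est} combined with the equivalence in Lemma \ref{lem:rel-eng-equiv}. No further discrete analysis is needed.

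Fix $\tau\in(0,T)$. Integrating the pointwise equivalence \eqref{eqn:comp-equiv} over $\Omega$ at time $\tau$, i.e.\ using \eqref{eqn:comp-equiv-2}, gives
\[
\norm{\vrho_{h,\delt}(\tau,\cdot) - \tvrho(\tau,\cdot)}_{L^2(\Omega)}^2 + \norm{\vrho_{h,\delt}\bu_{h,\delt}(\tau,\cdot) - \tvrho\tu(\tau,\cdot)}_{L^2(\Omega;\R^d)}^2 \lesssim \int\limits_\Omega E_{rel}(\vrho_{h,\delt},\bu_{h,\delt}\,\vert\,\tvrho,\tu)(\tau,\cdot)\,\dbx .
\]
The implied constant depends only on $\underline{\vrho},\overline{\vrho},\overline{u}$ from Hypothesis \ref{hyp:bnd} and on the $L^\infty$ bounds of $(\tvrho,\tu)$. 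The lower bound on the relative energy is the direction actually used. It rests on two facts:
\begin{itemize}
\item Since $P''(z)=\gamma z^{\gamma-2}$ is bounded below on the compact interval spanned by $[\underline{\vrho},\overline{\vrho}]$ and the range of $\tvrho$, we get $\Pi(\vrho_{h,\delt}\vert\tvrho)\gtrsim\abs{\vrho_{h,\delt}-\tvrho}^2$.
\item Writing $\vrho_{h,\delt}\bu_{h,\delt}-\tvrho\tu = \vrho_{h,\delt}(\bu_{h,\delt}-\tu) + (\vrho_{h,\delt}-\tvrho)\tu$ and using $\vrho_{h,\delt}\le\overline{\vrho}$ and $\tu\in L^\infty$, the momentum difference squared is bounded by $\vrho_{h,\delt}\abs{\bu_{h,\delt}-\tu}^2+\abs{\vrho_{h,\delt}-\tvrho}^2$.
\end{itemize}

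Next, insert the bound \eqref{eqn:err-est} from Theorem \ref{thm:err-est}. Each squared $L^2$ norm is then $\lesssim C(\sqrt{h}+\sqrt{\delt})$. Taking square roots and using $\sqrt{a+b}\le\sqrt a+\sqrt b$ gives
\[
\norm{\cdot}_{L^2}\lesssim \sqrt{C}\,\bigl(h^{1/4}+\delt^{1/4}\bigr)
\]
for both the density and the momentum differences. The constant absorbed into $\lesssim$ depends only on the strong-solution norms and the bounds in Hypothesis \ref{hyp:bnd}, and not on $h$ or $\delt$.

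The only point requiring care is that the equivalence constants are uniform in $h,\delt$. Hypothesis \ref{hyp:bnd} supplies this, through $\underline{\vrho}>0$ (needed for the convexity lower bound) and $\overline{\vrho},\overline{u}$.
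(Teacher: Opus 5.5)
Your proposal is correct and matches the paper's argument: the corollary follows by combining the bound \eqref{eqn:err-est} of Theorem \ref{thm:err-est} with the lower-bound direction of the equivalence \eqref{eqn:comp-equiv-2} and then taking square roots. Your explicit check of that lower bound simply fills in what the paper delegates to Lemma \ref{lem:rel-eng-equiv}. That check has two parts: the convexity of $P$ on a compact subinterval of $(0,\infty)$, and the splitting of the momentum difference.
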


\subsection{Error Estimates in the Asymptotic Regime}
\label{subsec:asymp-err-est}

In this section, we present the error estimates \color{black}for a singular limit problem, i.e. we compare the error \color{black}between the numerical solutions $(\vrho_{h,\delt,\veps}, \bu_{h,\delt,\veps})$ and a strong solution $\tv$ of the incompressible Euler system \eqref{eqn:incomp-eul}. To begin, we reintroduce the relative energy between the numerical solution and the strong solution, which is defined as 
\begin{equation}
\label{eqn:rel-eng-eps}
  E_{rel}(\vrho_{h,\delt,\veps}, \bu_{h,\delt,\veps}\vert 1, \tv) = \half\vrho_{h,\delt,\veps}\lvert\bu_{h,\delt,\veps} - \tv\rvert^2 + \frac{1}{\veps^2}\Pi(\vrho_{h,\delt,\veps}\vert 1),
\end{equation}
where $\Pi$ is defined in \eqref{eqn:rel-pot}. The existence of the strong solution $\tv\in W^{2,\infty}(\Omega;\R^d)$ to the incompressible Euler system is guaranteed provided the initial datum $\tv_0\in H^s(\Omega;\R^d)$ with $\Div\tv_0 = 0$, where $s>d/2+3$, cf.\ Corollary \ref{cor:str-incomp-sol}. Now, we recall that the initial data $(\vrho_\epso, \bu_\epso)$ used to initialise the scheme \eqref{eqn:fv-scheme} is well-prepared, i.e.\ 
\begin{align}
  &\norm{\vrho_\epso - 1}_{L^\infty(\Omega)}\lesssim \veps^2, \label{eqn:wp-mss-1} \\
  &\norm{\bu_\epso-\tv_0}_{L^2(\Omega;\R^d)} \lesssim \veps ;\quad \Div\tv_0 = 0. \label{eqn:wp-vel-1}
\end{align}
Under the above assumption, we have the following estimates on $\vrho_{h,\delt,\veps} - 1$, which is required for the forthcoming analysis. 

\color{black}
\begin{lemma}
\label{lem:disc-den-conv}
    Suppose that the initial data $(\vrho_\epso,\bu_\epso)$ are well-prepared, cf. \eqref{eqn:wp-mss-1}, \eqref{eqn:wp-vel-1}. Then 
    \begin{equation}
    \label{WP data_num}
        \norm{\frac{ \varrho_{h,\Delta t,\veps} -1}{\varepsilon}}_{L^{\infty}(0,T;L^2(\Omega))} \lesssim 1.
    \end{equation}
    In particular, the RHS of \eqref{WP data_num} is independent of $\varepsilon$ and the mesh and time-step parameters $h, \delt$, respectively. 
\end{lemma}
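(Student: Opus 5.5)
The plan is to combine the discrete entropy inequality \eqref{eqn:loc-ent-ineq} of Theorem \ref{thm:eng-stab} with the well-preparedness of the initial data and the quadratic behaviour of $\Pi(\cdot\,\vert\,1)$ near $1$.

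\emph{Step 1: the discrete entropy bound.} Summing \eqref{eqn:loc-ent-ineq} over $n$ gives, for every $n$,
\[
\sum_{K\in\T}\absk\Bigl(\half\rk^{n}\abs{\bu^{n}_K}^2 + \frac{1}{\veps^2}\Pi(\rk^{n}\vert 1)\Bigr)\leq \sum_{K\in\T}\absk\Bigl(\half\rk^{0}\abs{\bu^{0}_K}^2 + \frac{1}{\veps^2}\Pi(\rk^{0}\vert 1)\Bigr).
\]
Since $\vrho_{h,\delt,\veps}$ is piecewise constant in time, it suffices to bound the left-hand side uniformly in $n$, $h$, $\delt$ and $\veps$.

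\emph{Step 2: bound the initial entropy.} By \eqref{eqn:disc-ini}, $\rk^0=(\Pi_\T\vrho_\epso)_K$. The projection is an average, so \eqref{eqn:wp-mss-1} gives $\abs{\rk^0-1}\lesssim\veps^2$ in every cell. In particular $\rk^0\in[1/2,2]$ for small $\veps$; for $\veps$ not small the claim is trivial from the bounds in Hypothesis \ref{hyp:bnd}. On such a compact interval $\Pi(z\vert1)\approx(z-1)^2$, since $P''(z)=\gamma z^{\gamma-2}$ is bounded above and below there. Therefore
\[
\frac{1}{\veps^2}\sum_K\absk\,\Pi(\rk^0\vert1)\lesssim \frac{\veps^4}{\veps^2}\abs{\Omega}\lesssim 1 .
\]
For the kinetic part, Jensen's inequality gives $\norm{\Pi_\T\bu_\epso}_{L^2}\leq\norm{\bu_\epso}_{L^2}\leq\norm{\tv_0}_{L^2}+C\veps\lesssim1$ by \eqref{eqn:wp-vel-1}. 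Combined with $\rk^0\leq 2$, the kinetic part is $\lesssim 1$. Hence the right-hand side of Step 1 is $\lesssim 1$ uniformly.

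\emph{Step 3: lower bound on $\Pi$.} We need $\Pi(z\vert1)\gtrsim(z-1)^2$ for the numerical densities at time $t^n$. Under Hypothesis \ref{hyp:bnd} we have $z\in[\underline{\vrho},\overline{\vrho}]$, a compact subset of $(0,\infty)$. On it $P''\geq\gamma\min(\underline{\vrho}^{\gamma-2},\overline{\vrho}^{\gamma-2})>0$, so Taylor's formula with integral remainder gives $\Pi(z\vert1)\geq c(z-1)^2$ with $c$ independent of $h,\delt,\veps$. Dropping the nonnegative kinetic term in Step 1 yields
\[
\norm{\frac{\vrho_{h,\delt,\veps}(t)-1}{\veps}}^2_{L^2(\Omega)}\lesssim 1 \quad\text{for all } t.
\]
Taking the supremum in $t$ gives \eqref{WP data_num}.

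\emph{Main obstacle.} The main point of care is Step 3, where constants must not degenerate as $\veps\to0$. Two routes are available. The first uses Hypothesis \ref{hyp:bnd} as above. The second avoids it by using $\Pi(z\vert1)\gtrsim(z-1)^2$ on $[1/2,2]$ and $\Pi(z\vert1)\gtrsim\abs{z-1}$ (or $\gtrsim z^\gamma$) outside, as in the essential/residual decomposition of \cite{FN17}. With the second route, the residual set's contribution to $\norm{(\vrho-1)/\veps}_{L^2}^2$ is controlled via the uniform bound on $\vrho$ together with the smallness of the residual set's measure, which is $\lesssim\veps^2$. Since Hypothesis \ref{hyp:bnd} is already assumed in the paper, I would use the first route and note the alternative.
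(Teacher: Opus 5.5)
Your proof is correct and follows the same route as the paper. You sum the discrete entropy inequality \eqref{eqn:loc-ent-ineq}, bound the initial relative internal energy by $O(\veps^2)$ using well-preparedness, and use the lower bound on $P''$ from Hypothesis \ref{hyp:bnd} in a Taylor expansion of $\Pi(\cdot\,\vert\,1)$. You are in fact slightly more careful than the paper: its displayed inequality drops the initial kinetic energy $\half\int_\Omega\vrho^0\abs{\bu^0}^2$ from the right-hand side, whereas your Step 2 bounds it explicitly. One small point: the lower bound on $P''$ is needed on the interval between $\vrho$ and $1$, i.e.\ on $[\min\{\underline{\vrho},1\},\max\{\overline{\vrho},1\}]$, not just on $[\underline{\vrho},\overline{\vrho}]$.
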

\begin{proof}
    From the renormalized total energy inequality \eqref{eqn:loc-ent-ineq}, we obtain for $\tau \in [0,T]$, 
    \begin{multline*}
        \frac{1}{\varepsilon^2}   \int_{\Omega}  \paraL{ P(\varrho_{h,\Delta t,\veps}) - P'(1)\paraL{\varrho_{h,\Delta t,\veps} - 1} - P(1)} \dx{} \ (\tau) \\ \leq \frac{1}{\varepsilon^2}   \int_{\Omega}  \paraL{ P(\varrho_{h,\Delta t,\veps}) - P'(1)\paraL{\varrho_{h,\Delta t,\veps} - 1} - P(1)} \dx{} \ (0) \\
        \lesssim \int_\Omega P''(\varrho^*) \varepsilon^2 \dx{}  \text{ (due to well-prepared initial data \eqref{eqn:wp-mss})} \lesssim 1. 
    \end{multline*}
    Thus,
    \begin{equation*}
        \frac{1}{2\varepsilon^2}   \int_{\Omega}  P''(\varrho_{\dagger}) \paraL{\varrho_{h,\Delta t,\veps} - 1}^2 \dx{} \ (\tau) \leq C, \text{uniformly as } \varepsilon \to 0,  \text{ with } \varrho_{\dagger} \in [\varrho_{h,\Delta t,\veps},1].
    \end{equation*}
    Consequently,
    \begin{equation}
        \frac{1}{\varepsilon^2} \int_{\Omega} \paraL{\varrho_{h,\Delta t,\veps} - 1}^2 \dx{} \ (\tau) \lesssim 1, \text{uniformly as } \varepsilon \to 0.
    \end{equation}
    Note that $P$ is a convex function and $P''(\varrho_{\dagger})= \kappa \gamma \varrho_{\dagger}^{\gamma -2}$ with $\varrho_{\dagger} \in [\varrho_{h,\Delta t,\veps},1]$ is a positive quantity bounded from below, cf. Hypothesis \ref{hyp:bnd}. This finishes the proof.
\end{proof}
\color{black}

Analogously to the compressible case, we have the following equivalence as a straightforward consequence of Hypothesis \ref{hyp:bnd}.
\begin{proposition}
\label{prop:rel-eng-equiv-eps}
  Let $\tv\in W^{2,\infty}(\odom;\R^d)$ be a strong solution of the incompressible Euler system \eqref{eqn:incomp-eul}. Let $(\vrho_{h,\delt,\veps},\bu_{h,\delt,\veps})$ be the numerical solution generated by the scheme \eqref{eqn:fv-scheme}. Then, the following holds true:
  \begin{equation}
  \label{eqn:rel-eng-equiv-incomp}
    E_{rel}(\vrho_{h,\delt,\veps},\bu_{h,\delt,\veps}\vert 1,\tv) \approx \frac{1}{\veps^2}\abs{\vrho_{h,\delt,\veps} - 1}^2 + \abs{\vrho_{h,\delt,\veps}\bu_{h,\delt,\veps} - \tv}^2.
  \end{equation}
\end{proposition}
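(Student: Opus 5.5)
The plan is to prove the equivalence pointwise, treating the two pieces of $E_{rel}$ separately. Throughout, $\underline{\vrho}\le \vrho_{h,\delt,\veps}\le \overline{\vrho}$ and $\abs{\bu_{h,\delt,\veps}}\le\overline{u}$ from Hypothesis \ref{hyp:bnd} will be used, together with the fact that $\tv\in W^{2,\infty}$ is bounded, say $\abs{\tv}\le \overline{v}$. The implicit constants will depend only on $\underline{\vrho},\overline{\vrho},\overline{u},\overline{v},\gamma$, and not on $\veps$, $h$ or $\delt$.

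\textbf{Pressure part.} By Taylor's theorem with Lagrange remainder,
\[
\Pi(\vrho\vert 1)=\tfrac12 P''(\vrho_\dagger)(\vrho-1)^2, \qquad \vrho_\dagger\in[\min(\vrho,1),\max(\vrho,1)] .
\]
Here $P''(z)=\gamma z^{\gamma-2}$, and $z$ lies in the compact interval $[\min(\underline{\vrho},1),\max(\overline{\vrho},1)]\subset(0,\infty)$. Hence $P''(\vrho_\dagger)$ is bounded above and below by positive constants, and therefore
\[
\frac{1}{\veps^2}\Pi(\vrho_{h,\delt,\veps}\vert 1)\approx \frac{1}{\veps^2}\abs{\vrho_{h,\delt,\veps}-1}^2 .
\]

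\textbf{Kinetic part.} Write
\[
\vrho\bu-\tv=\vrho(\bu-\tv)+(\vrho-1)\tv .
\]
\emph{Upper bound.} From this decomposition,
\[
\abs{\vrho\bu-\tv}^2\le 2\overline{\vrho}\,\vrho\abs{\bu-\tv}^2+2\overline{v}^2\abs{\vrho-1}^2 .
\]
Since $\veps\le 1$ in the asymptotic regime, $\abs{\vrho-1}^2\le \veps^{-2}\abs{\vrho-1}^2$. So the right-hand side of \eqref{eqn:rel-eng-equiv-incomp} is bounded by a constant times $E_{rel}$, using the pressure part above.

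\emph{Lower bound.} Rearranging the decomposition gives
\[
\vrho\abs{\bu-\tv}^2\le \underline{\vrho}^{-1}\abs{\vrho(\bu-\tv)}^2\le 2\underline{\vrho}^{-1}\bigl(\abs{\vrho\bu-\tv}^2+\overline{v}^2\abs{\vrho-1}^2\bigr),
\]
and again $\abs{\vrho-1}^2\le\veps^{-2}\abs{\vrho-1}^2$. Combined with the pressure part, this yields $E_{rel}\lesssim$ the right-hand side of \eqref{eqn:rel-eng-equiv-incomp}.

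\textbf{Main point to watch.} The cross term $(\vrho-1)\tv$ mixes the unweighted quantity $\abs{\vrho-1}^2$ with the $\veps^{-2}$-weighted one. Absorbing it uniformly requires $\veps\lesssim 1$. I will state that assumption explicitly; it is natural, since the regime of interest is $\veps\to0$, and for $\veps$ bounded away from zero the constants simply depend on the upper bound of $\veps$. Everything else is elementary and follows the compressible analogue, Lemma \ref{lem:rel-eng-equiv}.
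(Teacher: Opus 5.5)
Your argument is correct and is essentially the elementary verification the paper intends when it calls the equivalence a ``straightforward consequence of Hypothesis~\ref{hyp:bnd}'' (the paper gives no further details): a Taylor expansion of $\Pi(\cdot\vert 1)$ with $P''$ bounded above and below on the admissible density range, plus the splitting $\vrho\bu-\tv=\vrho(\bu-\tv)+(\vrho-1)\tv$ with $\tv$ bounded. Your caveat that uniformity requires $\veps\lesssim 1$ is genuinely needed (take $\bu=\tv\neq 0$, $\vrho\neq 1$ and let $\veps\to\infty$); the paper leaves it implicit by working in the regime $\veps<1$.
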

In particular, we obtain for $\tau\in(0,T)$,
\begin{equation}
\label{eqn:rel-eng-equiv-incomp-2}
  \begin{split}
    \int\limits_{\Omega}E_{rel}(\vrho_{h,\delt,\veps},\bu_{h,\delt,\veps}\vert 1,\tv)(\tau,\cdot)\,\dbx\approx&\frac{1}{\veps^2}\norm{\vrho_{h,\delt,\veps}(\tau,\cdot) - 1}^2_{L^2(\Omega)} \\
    &+ \norm{\vrho_{h,\delt,\veps}\bu_{h,\delt,\veps}(\tau,\cdot) - \tv(\tau,\cdot)}^2_{L^2(\Omega;\R^d)}.
  \end{split}
\end{equation}
As done in the previous section, we set up a discrete variant of the relative energy inequality while using the strong solution $\tv$ as a suitable test function in the consistency formulation \eqref{eqn:cons-mass}-\eqref{eqn:cons-mom}. We now proceed to state and prove the second main result of this paper.

\begin{theorem}[Asymptotic Error Estimates]
\label{thm:asymp-err-est}
  Let the initial data $(\vrho_\epso, \bu_\epso)$ for the compressible Euler system \eqref{eqn:eul-sys} be well-prepared. Further, assume that $\tv_0\in H^s(\Omega;\R^d)$ with $\Div\tv_0 = 0$, where $s> d/2 + 3$. Let $\tv\in W^{2,\infty}(\odom;\R^d)$ be the strong solution of the incompressible system  emanating from $\tv_0$, where $T<T_{max}$. Let $(\vrho_{h,\delt,\veps}, \bu_{h,\delt,\veps})$ be the numerical solution generated by the scheme \eqref{eqn:fv-scheme}. Then, for any $\tau\in(0,T)$, there exists a constant 
  \[
    C = C(\norm{\tv}_{W^{2,\infty}}) > 0
  \] 
  such that 
  \begin{equation}
  \label{eqn:a-err-est}
    \int\limits_{\Omega}E_{rel}(\vrho_{h,\delt,\veps},\bu_{h,\delt,\veps}\vert 1,\tv)(\tau,\cdot)\dbx\lesssim C(\veps + \sqrt{\delt} + \sqrt{h}).
  \end{equation}
\end{theorem}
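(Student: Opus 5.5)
We follow the two-step structure of the proof of Theorem \ref{thm:err-est}, now with the incompressible solution $(1,\tv)$ as comparison state. We take $\varphi$ and $\uphi=\tv$ as test functions in the consistency formulation \eqref{eqn:cons-mass}--\eqref{eqn:cons-mom}.

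\textit{Step 1: the relative energy inequality.} We expand
\[
E_{rel}(\vrho,\bu\vert 1,\tv)=\Bigl(\half\vrho\abs{\bu}^2+\tfrac{1}{\veps^2}\Pi(\vrho\vert 1)\Bigr)-\vrho\bu\cdot\tv+\half\vrho\abs{\tv}^2 .
\]
The first bracket is non-increasing in time by the entropy inequality \eqref{eqn:loc-ent-ineq}. The mass term is handled with $\varphi=\half\abs{\tv}^2$ and the momentum term with $\uphi=\tv$. Both test functions lie in $W^{2,\infty}$ by Corollary \ref{cor:str-incomp-sol}, and by \eqref{eqn:cons-err} their consistency errors are $\lesssim(\sqrt{\delt}+\sqrt h)$ uniformly in $\veps<1$. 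The key simplification is in the pressure contribution of the momentum equation, $\frac{1}{\veps^2}\int p_{h,\delt,\veps}\Div\tv$. Since $\Div\tv=0$, we may replace it by
\[
\frac{1}{\veps^2}\int\bigl(p(\vrho)-p(1)-p'(1)(\vrho-1)\bigr)\Div\tv ,
\]
and this expression vanishes. It would anyway be bounded by $\int E_{rel}$, since $p(\vrho)-p(1)-p'(1)(\vrho-1)\lesssim\Pi(\vrho\vert1)$ under Hypothesis \ref{hyp:bnd}. Hence the singular $1/\veps^2$ term disappears.

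\textit{Step 2: the convective terms.} Using $\Dt\tv+(\tv\cdot\grad)\tv=-\grad\pi$, the remaining terms combine as in \eqref{eqn:err-est-4}. This gives the quadratic term $-\int\vrho(\bu-\tv)\otimes(\bu-\tv)\colon\grad\tv$, which is bounded by $\norm{\grad\tv}_\infty\int E_{rel}$, and the leftover
\[
\int_0^\tau\!\!\int_\Omega \vrho(\bu-\tv)\cdot\grad\pi\,\dbx\,\dt .
\]

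\textit{Step 3: the pressure leftover, which is the main obstacle.} This term is not controlled by the relative energy directly, so we rewrite it as
\[
\int\vrho\bu\cdot\grad\pi-\int(\vrho-1)\tv\cdot\grad\pi ,
\]
using $\int\tv\cdot\grad\pi=0$. The second piece is $\lesssim\veps$ by Lemma \ref{lem:disc-den-conv}. For the first piece we test the discrete mass consistency \eqref{eqn:cons-mass} with $\varphi=\pi$:
\[
\int_0^\tau\!\!\int\vrho\bu\cdot\grad\pi=\Bigl[\int\vrho\pi\Bigr]_0^\tau-\int_0^\tau\!\!\int\vrho\Dt\pi-\mathcal S^{mass}.
\]
Since $\pi$ is mean-free, we have $\int\vrho\pi=\int(\vrho-1)\pi=O(\veps)$ and $\int\vrho\Dt\pi=\int(\vrho-1)\Dt\pi=O(\veps)$, again by Lemma \ref{lem:disc-den-conv}. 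The error term satisfies $\mathcal S^{mass}=O(\sqrt{\delt}+\sqrt h)$. The point where care is needed is whether the consistency error for $\varphi=\pi$ stays uniform in $\veps$ (we rely on \eqref{eqn:cons-err} with the factor $1+\veps$). Another point to check is that the stabilisation terms do not reintroduce negative powers of $\veps$; we take the statement of Theorem \ref{thm:cons} at face value here.

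\textit{Step 4: initial data and Gronwall.} The initial relative energy is $\lesssim\veps^{2}\cdot\veps^{-2}\cdot\veps^2+\veps^2+h^2$. Indeed, $\Pi(\vrho_0\vert1)\lesssim\abs{\vrho_0-1}^2\lesssim\veps^4$ by \eqref{eqn:wp-mss-1}. The velocity part is $\lesssim\veps^2+h^2$ by \eqref{eqn:wp-vel-1} and \eqref{eqn:proj-op-est}. Collecting Steps 1–3 gives
\[
\Bigl[\int E_{rel}\Bigr]_0^\tau\lesssim C(\veps+\sqrt{\delt}+\sqrt h)+c\norm{\grad\tv}_\infty\int_0^\tau\!\!\int E_{rel},
\]
and Gronwall's lemma yields \eqref{eqn:a-err-est}.
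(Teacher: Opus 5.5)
Your proposal is correct and follows the paper's proof essentially step for step. You expand $E_{rel}$ and use the entropy inequality \eqref{eqn:loc-ent-ineq}, test with $\varphi=\half\abs{\tv}^2$ and $\uphi=\tv$ so the $\veps^{-2}$ pressure term drops by $\Div\tv=0$ (your detour via $p(\vrho)-p(1)-p'(1)(\vrho-1)$ is unnecessary), split the $\grad\tpi$ leftover into $\int(\vrho-1)\tv\cdot\grad\tpi$ plus a term handled by testing mass consistency with the mean-free $\tpi$ and Lemma \ref{lem:disc-den-conv}, and close with the initial bound $\veps^2+h^2$ and Gronwall, exactly as the paper does, likewise taking the $\veps$-uniform consistency errors of Theorem \ref{thm:cons} as given.
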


\begin{proof}
  As in the previous subsection, we divide the proof into two major steps. 

  \textit{Step 1:} Firstly, since $\tv\in W^{2,\infty}(\odom;\R^d)$ is a strong solution of the incompressible system \eqref{eqn:incomp-eul}, it solves 
  \begin{equation}
  \label{eqn:incomp-str-soln}
    \begin{split}
      \Div\tv = 0, \\
      \Dt\tv + (\tv\cdot\grad)\tv + \grad\tpi = 0,
    \end{split}
  \end{equation}
  where $\tpi\in W^{2,\infty}(\odom)$ is the incompressible pressure, which is mean-free, and it solves the elliptic problem 
  \[
    -\Delta_x \tpi = \grad\tv\colon(\grad\tv)^t,\quad \int\limits_\Omega\tpi(t,\cdot)\,\dbx = 0.
  \]
  Next, since the initial data is well-prepared and $\vrho^0_K = (\Pi_{\T}\vrho_\epso)_K$ for any $K\in\T$, observe that from \eqref{eqn:wp-mss-1},
  \[  
    \abs{\vrho^0_K - 1}\leq\frac{1}{\absk}\int_{K}\abs{\vrho_{\epso} - 1}\,\dbx\leq C\veps^2.
  \]
  Consequently,
  \begin{equation}
  \label{eqn:a-err-est-1}
    \norm{\vrho_{h,\delt,\veps}(0,\cdot) - 1}_{L^2(\Omega)}\lesssim\veps^2.
  \end{equation}
  In addition, due to \eqref{eqn:wp-vel-1}, it follows that
  \begin{equation}
  \label{eqn:a-err-est-2}  
    \norm{\bu_{h,\delt,\veps}(0,\cdot) - \tv_0}_{L^2(\Omega;\R^d)}\lesssim\veps + h.
  \end{equation}
  Hence, as a consequence of \eqref{eqn:rel-eng-equiv-incomp-2} and the above estimates, we obtain 
  \begin{equation}
  \label{eqn:a-err-est-ini}
    \int_\Omega E_{rel}(\vrho_{h,\delt,\veps},\bu_{h,\delt,\veps}\vert 1,\tv)(0,\cdot)\,\dbx\lesssim\veps^2 + h^2.
  \end{equation}

  Now, we establish the required relative energy inequality. To begin, we rewrite the relative energy as
  \begin{equation}
  \label{eqn:a-err-est-3}
    \begin{split}
      E_{rel}(\vrho_{h,\delt,\veps},\bu_{h,\delt,\veps}\vert 1,\tv) = &\Bigl(\half\vrho_{h,\delt,\veps}\lvert\bu_{h,\delt,\veps}\rvert^2 + \frac{1}{\veps^2}\Pi(\vrho_{h,\delt,\veps}\vert 1)\Bigr) \\
      &+ \half\vrho_{h,\delt,\veps}\abs{\tv}^2 - \vrho_{h,\delt,\veps}\bu_{\T,\delt, \veps}\cdot\tv.
    \end{split}
  \end{equation}
  Then, for any $\tau\in(0,T)$, the above equation along with the entropy inequality \eqref{eqn:loc-ent-ineq} will yield
  \begin{equation}
  \label{eqn:a-err-est-4}
    \begin{split}
      \biggl\lbrack\int_\Omega E_{rel}(\vrho_{h,\delt,\veps},\bu_{h,\delt,\veps}\vert 1,\tv)(t,\cdot)\,\dbx\biggr\rbrack_{t=0}^{t=\tau} \leq &\biggl\lbrack\int_{\Omega}\half\vrho_{h,\delt,\veps}(t,\cdot)\abs{\tv}^2(t,\cdot)\,\dbx\biggr\rbrack_{t=0}^{t=\tau} \\
      &- \biggl\lbrack\int_{\Omega}\vrho_{h,\delt,\veps}\bu_{h,\delt,\veps}(t,\cdot)\cdot\tv(t,\cdot)\,\dbx\biggr\rbrack_{t=0}^{t=\tau}.
    \end{split} 
  \end{equation}
  Next, we set $\varphi = \half\lvert\tv\rvert^2$ as the test function in \eqref{eqn:cons-mass}, and set $\uphi = \tv$ as the test function in \eqref{eqn:cons-mom}. Proceeding to simplify \eqref{eqn:a-err-est-4}, we obtain
  \begin{multline}
    \left[ \int_{\Omega} E_{rel}(\vrho_{h,\delt,\veps},\bu_{h,\delt,\veps}\vert 1,\tv) \dbx\, \right]_{t=0}^{t=\tau} \leq  \mathcal{S}_{h,\Delta t}^{mass} - \mathcal{S}_{h,\Delta t}^{mom} \\ 
    + \int_0^{\tau} \int_{\Omega} \left[\varrho_{h, \delt, \veps} \paraL{\widetilde{\boldsymbol{v}}-\bu_{h, \delt, \veps}} \cdot \partial_t \widetilde{\boldsymbol{v}}  + \varrho_{h, \delt, \veps} \paraL{\widetilde{\boldsymbol{v}}-\bu_{h, \delt, \veps}} \otimes \bu_{h, \delt, \veps} : \grad \widetilde{\boldsymbol{v}}\right]  \dbx\, \dt,
    \end{multline}
  with 
  \begin{equation}
    \modL{\mathcal{S}_{h,\Delta t}^{mass}} \leq   C_1\paraL{\norm{\frac{1}{2} \modL{\widetilde{\boldsymbol{v}}}^2}_{W^{2,\infty}\paraL{[0,\tau]\times\Omega}}} \paraL{ (1 + \varepsilon)\paraL{\sqrt{h} +\sqrt{\Delta t}}}, 
  \end{equation}
  \begin{equation}
    \modL{\mathcal{S}_{h,\Delta t}^{mom}} \leq  C_2 \paraL{\norm{\widetilde{\boldsymbol{v}}}_{W^{2,\infty}\paraL{[0,\tau]\times \Omega;\mathbb{R}^d}}}  \paraL{ (1 + \varepsilon)\paraL{\sqrt{h} +\sqrt{\Delta t}}}.
  \end{equation}
  Now, since $\tv$ satisfies \eqref{eqn:incomp-str-soln}, we can rewrite the integrand appearing in the above equation as 
  \begin{gather*}
    \vrho_{h,\delt,\veps}(\tv - \bu_{h,\delt,\veps})\cdot\Dt\tv + \vrho_{h,\delt,\veps}(\tv  - \bu_{h,\delt,\veps})\otimes\bu_{h,\delt,\veps}\colon\grad\tv \\
    = -\vrho_{h,\delt,\veps}(\tv - \bu_{h,\delt,\veps})\cdot\grad\tpi - \vrho_{h,\delt,\veps}(\tv  - \bu_{h,\delt,\veps})\otimes(\tv - \bu_{h,\delt,\veps})\colon\grad\tv
  \end{gather*}
  Utilising the above simplification, we get
  \begin{gather}
    \left[ \int_{\Omega} E_{rel}(\vrho_{h,\delt,\veps},\bu_{h,\delt,\veps}\vert 1,\tv) \dbx\, \right]_{t=0}^{t=\tau} \leq  \mathcal{S}_{h,\Delta t}^{mass} - \mathcal{S}_{h,\Delta t}^{mom} - \int_0^\tau\int_{\Omega}\vrho_{h,\delt,\veps}\tv\cdot\grad\tpi\,\dbx\,\dt\nonumber \\
    + \int_0^\tau\int_{\Omega}\vrho_{h,\delt,\veps}\bu_{h,\delt,\veps}\cdot\grad\tpi\,\dbx\,\dt
    - \int_0^\tau\int_{\Omega}\vrho_{h,\delt,\veps}(\tv  - \bu_{h,\delt,\veps})\otimes(\tv - \bu_{h,\delt,\veps})\colon\grad\tv\,\dbx\,\dt. \label{eqn:a-err-est-5}
  \end{gather}
  
  Now, we analyse the remaining terms individually. To begin, since $\Div\tv = 0$, note that $\int_0^\tau\int_\Omega\tv\cdot\grad\tpi\,\dbx\,\dt = \int_0^\tau\int_{\Omega}\Div(\tpi\tv)\,\dbx\,\dt = 0.$ Hence,
  \[
    \int_0^\tau\int_\Omega\vrho_{h,\delt,\veps}\tv\cdot\grad\tpi\,\dbx\,\dt = \int_{0}^\tau\int_{\Omega}(\vrho_{h,\delt,\veps} - 1)\tv\cdot\grad\tpi\,\dbx\,\dt.
  \]
  Invoking Lemma \ref{lem:disc-den-conv}, we immediately obtain
  \begin{equation}
  \label{eqn:a-err-est-6}
    \biggl\lvert\int_0^\tau\int_\Omega\vrho_{h,\delt,\veps}\tv\cdot\grad\tpi\,\dbx\,\dt\biggr\rvert\leq c_1(\norm{\tv}_{W^{2,\infty}((0,\tau)\times\Omega;\R^{d})})\veps
  \end{equation}
  Next, we set $\tpi$ as a test function in \eqref{eqn:cons-mass}. This will yield
  \begin{equation}
  \label{eqn:a-err-est-7}
    \int_0^\tau\int_\Omega\vrho_{h,\delt,\veps}\bu_{h,\delt,\veps}\cdot\grad\tpi\,\dbx\,\dt = \biggl\lbrack\int_\Omega\vrho_{h,\delt,\veps}(t,\cdot)\tpi(t,\cdot)\,\dbx\biggr\rbrack_{t=0}^{t=\tau} - \int_0^\tau\int_\Omega\vrho_{h,\delt,\veps}\Dt\tpi\,\dbx\,\dt - \mathcal{S}^{mass}_{h,\delt}.
  \end{equation}
  Recalling that $\tpi$ is mean-free, we can further simplify the above expression to obtain 
  \begin{equation}
  \label{eqn:a-err-est-8}
    \begin{split}
      \int_0^\tau\int_\Omega\vrho_{h,\delt,\veps}\bu_{h,\delt,\veps}\cdot\grad\tpi\,\dbx\,\dt = &\biggl\lbrack\int_\Omega(\vrho_{h,\delt,\veps}(t,\cdot) - 1)\tpi(t,\cdot)\,\dbx\biggr\rbrack_{t=0}^{t=\tau} \\
      &- \int_0^\tau\int_\Omega(\vrho_{h,\delt,\veps} - 1)\Dt\tpi\,\dbx\,\dt - \mathcal{S}^{mass}_{h,\delt}.
    \end{split}
  \end{equation}
  Therefore, as done earlier, invoking Lemma \ref{lem:disc-den-conv} allows us to obtain
  \begin{equation}
  \label{eqn:a-err-est-9}
    \biggl\lvert\int_0^\tau\int_\Omega\vrho_{h,\delt,\veps}\bu_{h,\delt,\veps}\cdot\grad\tpi\,\dbx\,\dt\biggr\rvert\leq c_2(\norm{\tv}_{W^{2,\infty}((0,\tau)\times\Omega;\R^{d})})\veps + \lvert\mathcal{S}^{mass}_{h,\delt}\rvert,
  \end{equation}
  where $c_2>0$ is a constant which is ultimately dependent on $\norm{\tv}_{W^{2,\infty}}$, as the norms of $\tpi$ depend on the norms of $\tv$. Finally, it straightforwardly follows that 
  \begin{equation}
  \label{eqn:a-err-est-10}
    \begin{split}
      \modL{\int_0^{\tau} \int_{\Omega}  \varrho_{h,\delt,\veps} \paraL{\widetilde{\boldsymbol{v}}-\bu_{h,\delt,\veps}} \otimes\paraL{\bu_{h,\delt,\veps} - \widetilde{\boldsymbol{v}}}  : \grad \widetilde{\boldsymbol{v}}\,\dbx\,\dt} \\
      \lesssim \norm{\grad \widetilde{\boldsymbol{v}}}_{L^{\infty}\paraL{[0,\tau] \times \Omega;\mathbb{R}^{d\times d}}} \int_{0}^{\tau} \int_{\Omega} E(\varrho_{h,\delt,\veps},\bu_{h,\delt,\veps}\vert 1,\widetilde{\boldsymbol{v}})\,\dbx\,\dt. 
    \end{split}
  \end{equation}
  Summarising, from \eqref{eqn:a-err-est-5}-\eqref{eqn:a-err-est-10}, we get
  \begin{multline}
    \left[ \int_{\Omega} E_{rel}(\varrho_{h, \delt,\veps},\bu_{h, \delt,\veps}|1,\tv)\dbx \right]_{t=0}^{t=\tau}  
    \leq c_3\Bigg(\norm{\frac{1}{2} \modL{\tv}^2 }_{W^{2,\infty}\paraL{[0,\tau]\times \Omega}}, \norm{\tv}_{W^{2,\infty}\paraL{[0,\tau]\times\Omega; \mathbb{R}^d}}\Bigg) \\
    \times\paraL{(1+\veps)\sqrt{h}+\sqrt{\Delta t} + \veps} 
    + c_4 \paraL{\norm{\grad \tv}}_{L^{\infty}\paraL{[0,\tau]\times \Omega;\mathbb{R}^{d\times d}}} \int_{0}^{\tau} \int_{\Omega} E_{rel}(\varrho_{h, \delt,\veps},\bu_{h, \delt,\veps}| 1,\tv) \dbx\dt.
  \end{multline}
  Finally, invoking Gronwall's lemma and the estimate on the initial relative energy \eqref{eqn:a-err-est-ini} will yield the desired inequality \eqref{eqn:a-err-est}.
\end{proof}

Due to the equivalence provided by Proposition \ref{prop:rel-eng-equiv-eps}, we have the following corollary.
\begin{corollary}
  Let the conditions stated in Theorem \ref{thm:asymp-err-est} hold. Then, for any $\tau\in(0,T)$,
  \begin{align}
    &\norm{\vrho_{h,\delt,\veps}(\tau,\cdot) - 1}_{L^2(\Omega)}\lesssim \veps(\veps^{\half} + h^{\frac{1}{4}} + \delt^{\frac{1}{4}}), \\
    &\norm{\vrho_{h,\delt,\veps}\bu_{h,\delt,\veps}(\tau,\cdot) - \tv(\tau,\cdot)}_{L^2(\Omega;\R^d)}\lesssim \veps^{\half} + h^{\frac{1}{4}} + \delt^{\frac{1}{4}}
  \end{align}
\end{corollary}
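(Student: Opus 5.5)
This corollary follows directly from Theorem \ref{thm:asymp-err-est} combined with the equivalence \eqref{eqn:rel-eng-equiv-incomp-2} of Proposition \ref{prop:rel-eng-equiv-eps}. Fix $\tau\in(0,T)$. By \eqref{eqn:rel-eng-equiv-incomp-2}, both
\[
\frac{1}{\veps^2}\norm{\vrho_{h,\delt,\veps}(\tau,\cdot)-1}^2_{L^2(\Omega)}
\quad\text{and}\quad
\norm{\vrho_{h,\delt,\veps}\bu_{h,\delt,\veps}(\tau,\cdot)-\tv(\tau,\cdot)}^2_{L^2(\Omega;\R^d)}
\]
are bounded, up to a multiplicative constant, by $\int_\Omega E_{rel}(\vrho_{h,\delt,\veps},\bu_{h,\delt,\veps}\vert 1,\tv)(\tau,\cdot)\,\dbx$. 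Here $\Pi(\cdot\vert 1)$ is comparable to $\abs{\cdot-1}^2$ thanks to Hypothesis \ref{hyp:bnd}, and $\vrho_{h,\delt,\veps}$ is bounded above and below. Theorem \ref{thm:asymp-err-est} bounds this integral by $C(\veps+\sqrt{\delt}+\sqrt h)$, with $C$ depending only on $\norm{\tv}_{W^{2,\infty}}$.

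For the density, multiply by $\veps^2$ and take square roots. This gives $\norm{\vrho_{h,\delt,\veps}(\tau)-1}_{L^2}\lesssim \veps\,(\veps+\sqrt{\delt}+\sqrt h)^{1/2}$. Then use the subadditivity $\sqrt{a+b+c}\le\sqrt a+\sqrt b+\sqrt c$ for nonnegative $a,b,c$, which yields $\veps(\veps^{1/2}+h^{1/4}+\delt^{1/4})$. For the momentum, take the square root of the same bound directly, again using subadditivity, to get $\veps^{1/2}+h^{1/4}+\delt^{1/4}$.

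The only point needing care is the equivalence itself, specifically the direction bounding $\abs{\vrho\bu-\tv}^2$ by the relative energy. I would write
\[
\vrho\bu-\tv=\vrho(\bu-\tv)+(\vrho-1)\tv
\]
and note that $\abs{\vrho(\bu-\tv)}^2\le\overline{\vrho}\,\vrho\abs{\bu-\tv}^2$. The remaining term satisfies $\abs{(\vrho-1)\tv}^2\le\norm{\tv}^2_{L^\infty}\abs{\vrho-1}^2\le\norm{\tv}^2_{L^\infty}\veps^2\cdot\veps^{-2}\abs{\vrho-1}^2$, which is controlled by the $\veps^{-2}\Pi$ part since $\veps\le1$. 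The implicit constants therefore depend only on $\underline{\vrho}$, $\overline{\vrho}$, $\gamma$ and $\norm{\tv}_{L^\infty}$, and not on $\veps,h,\delt$. So the rates are uniform, and the corollary follows.
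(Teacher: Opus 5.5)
Your proposal is correct and follows the paper's route: you combine the bound of Theorem \ref{thm:asymp-err-est} with the equivalence \eqref{eqn:rel-eng-equiv-incomp-2} and take square roots using subadditivity. Your explicit check of the direction $\abs{\vrho\bu-\tv}^2\lesssim E_{rel}$, via the splitting $\vrho(\bu-\tv)+(\vrho-1)\tv$ and the restriction $\veps\le 1$, fills in a step that the paper leaves as a ``straightforward consequence'' of Hypothesis \ref{hyp:bnd}.
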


\section{Numerical Results}
\label{sec:num-res}

We illustrate the behaviour of the proposed finite volume method in a series of numerical experiments.  To begin, we implement the following sufficient time-step restriction that ensures the validity of the time-step condition required for the stability of the scheme, namely the condition \textit{(2)} in Theorem \ref{thm:eng-stab}. The proof of this follows analogously as given in \cite{CDV17}, and we refer to it for the same.

\begin{lemma}[Sufficient Time-Step Condition]
  For $\sigma = K\vert L\in\E_{int}$, let $\beta_{\sigma} = \displaystyle\biggl(\max\biggl\lbrace\frac{\abs{\partial K}}{\abs{K}}, \frac{\abs{\partial L}}{\abs{L}}\biggr\rbrace\biggr)^{-1}$. Suppose the time-step $\delt$ satisfies 
  \begin{equation}
    \frac{\delt}{\beta_{\sigma}}\Bigl(\abs{\avg{\bu^n}_{\sigma}\cdot\nuk} + \frac{\abs{\diff{\vrho^{n+1}}_{\sigma}}}{\max{\lbrace\rk^{n+1}, \vrho^{n+1}_L\rbrace}} + \sqrt{\frac{\eta}{\veps^2}\abs{\diff{p^{n+1}}_{\sigma}}}\Bigr)\leq\frac{1}{4}\min\Bigl\lbrace 1, \frac{\min\lbrace\rk^n, \vrho^{n+1}_L\rbrace}{\max{\lbrace\rk^{n+1}, \vrho^{n+1}_L\rbrace}}\Bigr\rbrace.
  \end{equation}
  Then, $\delt$ satisfies the condition \textit{(2)} of Theorem \ref{thm:eng-stab}.
\end{lemma}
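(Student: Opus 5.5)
The plan is to show that the stated time-step bound forces, for every $K\in\T$,
\[
\frac{\delt}{\rk^n}\sum_{\sink}\frac{\abssig}{\absk}\lvert F^{n+1}_\sk\rvert\leq\frac14 .
\]
We do this face by face: bound each $\lvert F^{n+1}_\sk\rvert$ separately, then sum over $\sigma\in\E(K)$ using $\sum_{\sink}\abssig/\absk = \abs{\partial K}/\absk\leq 1/\beta_\sigma$. The fluxes are treated as in \cite{CDV17}.

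First, I expand the mass flux through the splitting \eqref{eqn:mss-flx-split}. The upwind density satisfies $\vrho^{n+1}_{\sigma,up}\leq\max\{\rk^{n+1},\vrho^{n+1}_L\}$. The triangle inequality then gives
\[
\lvert F^{n+1}_\sk\rvert\leq \max\{\rk^{n+1},\vrho^{n+1}_L\}\bigl(\abs{u^n_\sk} + \abs{\du^{n+1}_\sigma\cdot\nuk}\bigr) + \abs{\diff{\vrho^{n+1}}_\sigma}.
\]
Factoring out $\max\{\rk^{n+1},\vrho^{n+1}_L\}$ yields a bracket of three terms. The first, $\abs{\avg{\bu^n}_\sigma\cdot\nuk}$, and the second, the normalised density jump, match the time-step hypothesis directly.

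Second, the stabilisation term needs care. By \eqref{eqn:stab-term} and the definition of $\gradd$,
\[
\abs{\du^{n+1}_\sigma\cdot\nuk} = \frac{\eta\delt}{\veps^2}\frac{\abssig}{\abs{\dsig}}\abs{\diff{p^{n+1}}_\sigma}.
\]
This term carries an extra factor $\delt$. I would multiply the whole estimate by $\delt/\beta_\sigma$ and use $\abssig/\abs{\dsig}\lesssim 1/\beta_\sigma$, which holds because $\abs{\dsig}\approx\absk$ and $\abssig\leq\abs{\partial K}$. That turns the stabilisation contribution into
\[
\Bigl(\frac{\delt}{\beta_\sigma}\Bigr)^2\frac{\eta}{\veps^2}\abs{\diff{p^{n+1}}_\sigma} = \Bigl(\frac{\delt}{\beta_\sigma}\sqrt{\frac{\eta}{\veps^2}\abs{\diff{p^{n+1}}_\sigma}}\Bigr)^2 .
\]
The hypothesis bounds the quantity inside the square by $1/4\leq1$, so its square is bounded by the quantity itself. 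This is exactly why the square root appears in the condition. The constant hidden in $\abssig/\abs{\dsig}$ versus $1/\beta_\sigma$ has to be tracked carefully, since the dual cell is split between $K$ and $L$. I expect this to be the delicate step, and may absorb a factor $2$ by using $\abs{D_{K,\sigma}}$ for the half-diamond.

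Third, I combine the pieces. Each bracket term multiplied by $\delt/\beta_\sigma$ is bounded by the left-hand side of the hypothesis. Multiplying by $\max\{\rk^{n+1},\vrho^{n+1}_L\}/\rk^n$ and using the factor $\min\{\rk^n,\vrho^{n+1}_L\}/\max\{\rk^{n+1},\vrho^{n+1}_L\}$ on the right-hand side cancels the density ratio. This gives a per-face bound of the form $\frac{\delt}{\rk^n}\lvert F^{n+1}_\sk\rvert\lesssim\frac14\beta_\sigma$, so each face contributes its share of $\frac14$. Summing over $\sigma\in\E(K)$ with weights $\abssig/\absk$ then gives the total bound $\frac14$ via $\beta_\sigma\abs{\partial K}/\absk\leq1$.

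The main obstacle is this bookkeeping: the density ratio and the $\frac14$ must be distributed so that the sum over faces closes exactly at $\frac14$ rather than a multiple of it. I would follow the argument of \cite{CDV17} verbatim for the convective and jump parts, adding only the squared stabilisation estimate above.
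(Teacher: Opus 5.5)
Your skeleton is the right one. The flux bound $\abs{F^{n+1}_\sk}\le M_\sigma\bigl(\abs{\avg{\bu^n}_\sigma\cdot\nuk}+\abs{\du^{n+1}_\sigma\cdot\nuk}\bigr)+\abs{\diff{\vrho^{n+1}}_\sigma}$, with $M_\sigma=\max\{\rk^{n+1},\vrho^{n+1}_L\}$, is correct; it follows from $(w^n_\sk)_+-(w^n_\sk)_-=\abs{u^n_\sk}+\abs{\delta u^{n+1}_\sk}$. The density ratio cancels since $\min\{\rk^n,\vrho^{n+1}_L\}\le\rk^n$. A per-face bound $\frac{\delt}{\rk^n}\abs{F^{n+1}_\sk}\le\frac{\beta_\sigma}{4}$ then sums to exactly $\frac14$, because $\beta_\sigma\le\absk/\abs{\partial K}$.

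The gap is the stabilisation step, which you leave at ``$\lesssim$''. Condition (2) is a sharp threshold, so an unspecified constant there means the lemma is not proved. Set $X=\frac{\delt}{\beta_\sigma}\sqrt{\frac{\eta}{\veps^2}\abs{\diff{p^{n+1}}_\sigma}}$. Then exactly $\frac{\delt}{\beta_\sigma}\abs{\du^{n+1}_\sigma\cdot\nuk}=\beta_\sigma\frac{\abssig}{\abs{\dsig}}X^2$, so your route ``$X\le1\Rightarrow X^2\le X$'' needs $\beta_\sigma\abssig/\abs{\dsig}\le1$. That is false in general. The bounds you cite ($\abs{\dsig}\ge\abs{D_{K,\sigma}}=\absk/(2d)$ and $\abssig\le\abs{\partial K}$) only give $2d$. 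Even using both halves of $\dsig$, the ratio can approach $d/2=\frac32$ when $d=3$, for cuboids elongated parallel to $\sigma$. So the factor $2$ you hope to absorb does not rescue the argument in three dimensions.

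The missing idea is to keep the full strength of the hypothesis. Its right-hand side is at most $\frac14$, so $X\le\frac14$ and hence $X^2\le\frac14X$. Combine this with the exact geometry of rectangles and cuboids: $\abs{D_{K,\sigma}}=\absk/(2d)$, and $\abs{\partial K}\ge2\abssig$ because the face opposite $\sigma$ has the same measure. This gives
\[
\frac{\delt}{\beta_\sigma}\abs{\du^{n+1}_\sigma\cdot\nuk}=\beta_\sigma\frac{\abssig}{\abs{\dsig}}X^2\le\frac{\absk}{\abs{\partial K}}\cdot\frac{2d\abssig}{\absk}\cdot\frac{X}{4}\le\frac{d}{4}X\le X\qquad(d=2,3).
\]
Hence each interior face satisfies $\frac{\delt}{\beta_\sigma}\bigl(\abs{\avg{\bu^n}_\sigma\cdot\nuk}+\abs{\du^{n+1}_\sigma\cdot\nuk}+\abs{\diff{\vrho^{n+1}}_\sigma}/M_\sigma\bigr)\le\frac14\min\{\rk^n,\vrho^{n+1}_L\}/M_\sigma$ with no hidden constant. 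Your summation over $\sigma\in\E(K)$ then closes at exactly $\frac14$.

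You should also state that faces in $\E_{ext}$ contribute nothing (periodic or no-flux setting), since the hypothesis is imposed only on $\E_{int}$.
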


Note that the above condition is still implicit in nature. Thus, we implement it in an explicit fashion in our computations.

\subsection*{Stationary Vortex}

We consider the following stationary vortex problem to assess the low Mach behaviour of the present scheme, as the vortex problems act as benchmarks to validate the dissipative properties of the proposed schemes. We consider a square domain $\Omega = \lbrack 0,1\rbrack\times\lbrack 0,1\rbrack$. A radially symmetric vortex is placed at the centre of the domain $(x_1^c,x_2^c) = (0.5,0.5)$, and we let $r = \sqrt{(x_1-x_1^c)^2 + (x_2-x^c_2)^2}$ denote the distance from the centre of the domain. The angular velocity $u_\vth(r)$ is defined as 
\begin{equation*}
    u_\vth(r) = 
    \begin{cases}
        a_1r,&\text{if }r\leq r_1,\\
        a_2 + a_3r,&\text{if }r_1<r\leq r_2,\\
        0,&\text{otherwise}.
    \end{cases}
\end{equation*}
Here, $r_1$ and $r_2$ denote the inner and outer radii, and we choose them as $r_1 = 0.2$ and $r_2 = 0.4$. The other constants are chosen as 
\[
    a_1 = \frac{\overline{a}}{r_1},\quad a_2 = -\frac{\overline{a}r_2}{r_1 - r_2},\quad a_3 = \frac{\overline{a}}{r_1 - r_2},
\]
where $\overline{a} = 0.1$. We begin with an exact steady solution $(\boldsymbol{v}, \pi)$, $\boldsymbol{v} = (v_1, v_2)$, of the incompressible Euler system which is given by
\begin{gather}
    v_1(t, x_1,x_2) = u_\vth(r)\biggr(\frac{x_2 - x_2^c}{r}\biggl),\quad v_2(t,x_1,x_2) = -u_\vth(r)\biggr(\frac{x_1 - x_1^c}{r}\biggl), \\
    \pi(t,x,y) = \int_0^r\frac{u^2_\vth(s)}{s}\mathrm{d}s.
\end{gather}
To obtain the initial data for the compressible system, since the vortex is stationary and has zero radial velocity, it satisfies the balance 
\[
  \frac{1}{\veps^2}\partial_{r}p(\vrho) = \frac{1}{r}\vrho u_{\vth}(r).
\]
Integrating the above equation, we obtain the following initial data for the compressible Euler system:  
\begin{gather}
\label{eqn:vort-ini}
    \vrho(0,x_1,x_2) =  \Bigl(1 + \frac{\gamma\veps^2}{\gamma-1}\pi(0,x,y)\Bigr)^{\frac{1}{\gamma - 1}}, \\
    u_1(0,x_1,x_2) = u_\vth(r)\Bigl(\frac{x_2 - x_2^c}{r}\Bigr),\quad u_2(0,x_1,x_2) = -u_\vth(r)\Bigl(\frac{x_1 - x_1^c}{r}\Bigr). \nonumber
\end{gather}
We set $\gamma = 2$ and the final time is $T = 0.1$. 

To begin, we wish to assess the behaviour of the scheme in the limit $\veps\to 0$. Since $\vrho\to 1$ as $\veps\to 0$, we first present the plots of the deviation $\vrho - 1$ for $\veps = 10^{-i},\ i = 0,\dots,3$, in Figure \ref{fig:den}. As we can see, the overall shape of the vortex is maintained even for smaller values of $\veps$. Also, we plot the relative or the flow Mach number $M = \sqrt{(u_1^2 + u_2^2)/p^\prime(\vrho)}$ at the final time and present them in Figure \ref{fig:rel_mach}. The profiles are nearly identical, leading us to conclude that the dissipation of the scheme is independent of the Mach number. To further strengthen this claim, we compute the relative kinetic energy over time and present it in Figure \ref{fig:rel-ke}. The lines are clearly overlapping, which further corroborates our claim of the dissipation being independent of the Mach number.

\begin{figure}[htpb]
    \centering
    \includegraphics[height = 0.2\textheight]{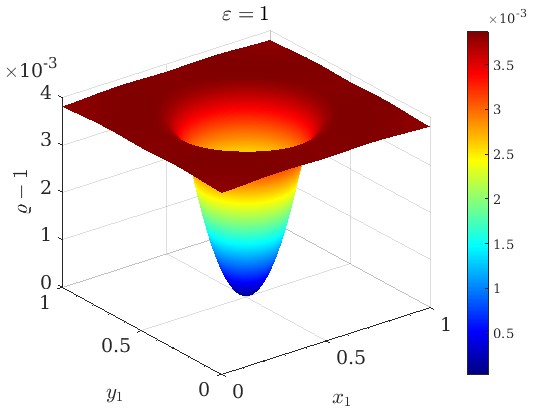}
    \includegraphics[height = 0.2\textheight]{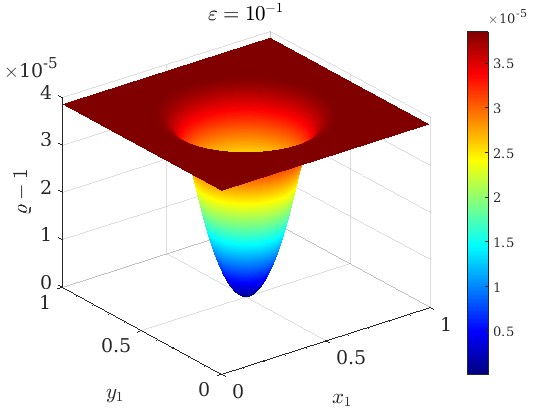}
    \includegraphics[height = 0.2\textheight]{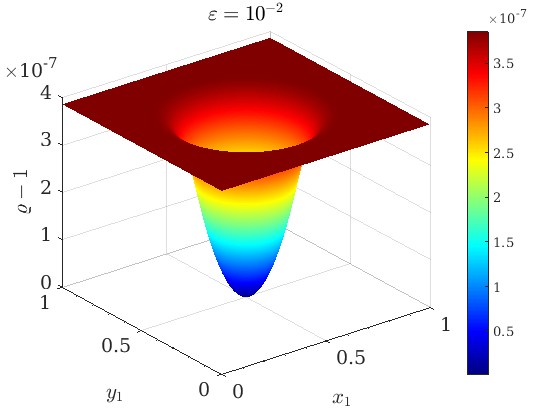}
    \includegraphics[height = 0.2\textheight]{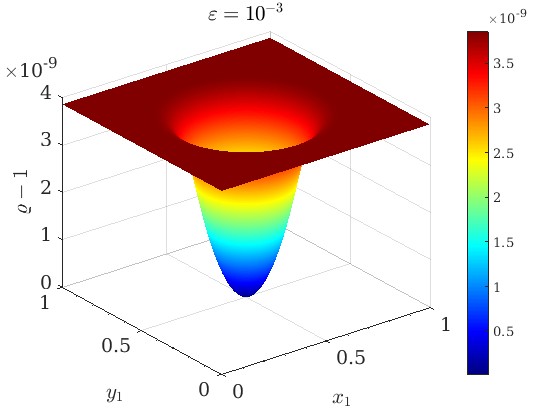}
    \caption{The deviation of the density $\vrho$ from 1 at the final time $T = 0.1$ for different values of $\veps$ on a $512\times 512$ grid.}
    \label{fig:den}
\end{figure}

\begin{figure}[htpb]
    \centering
    \includegraphics[height = 0.2\textheight]{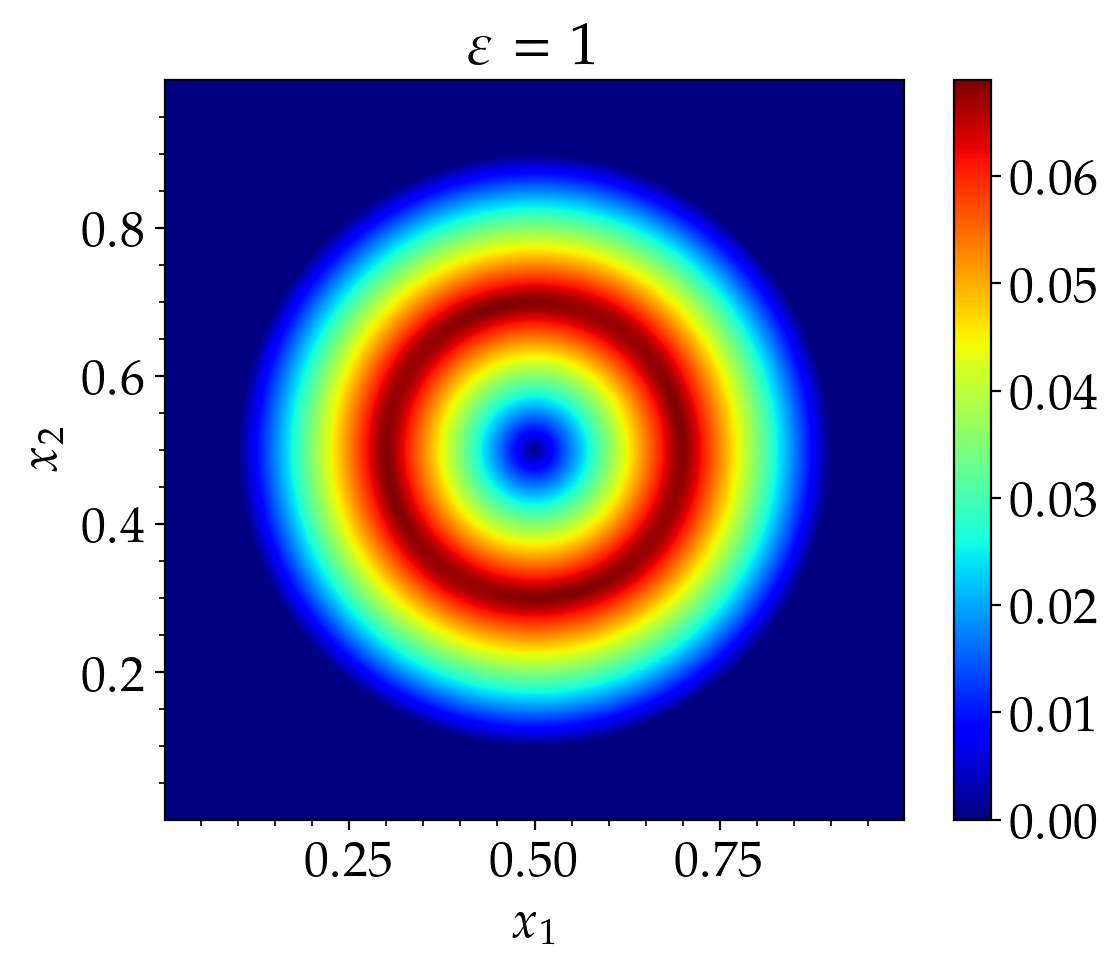}
    \includegraphics[height = 0.2\textheight]{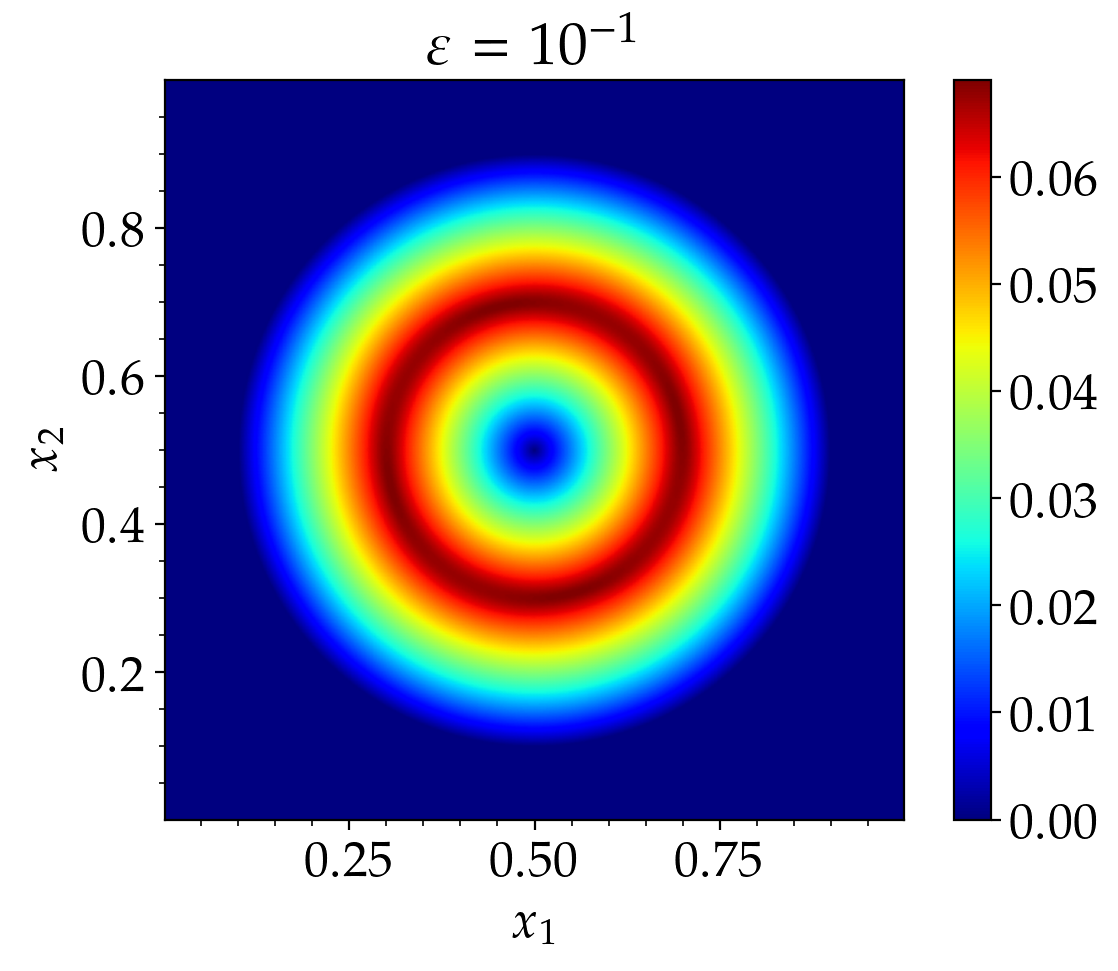}
    \includegraphics[height = 0.2\textheight]{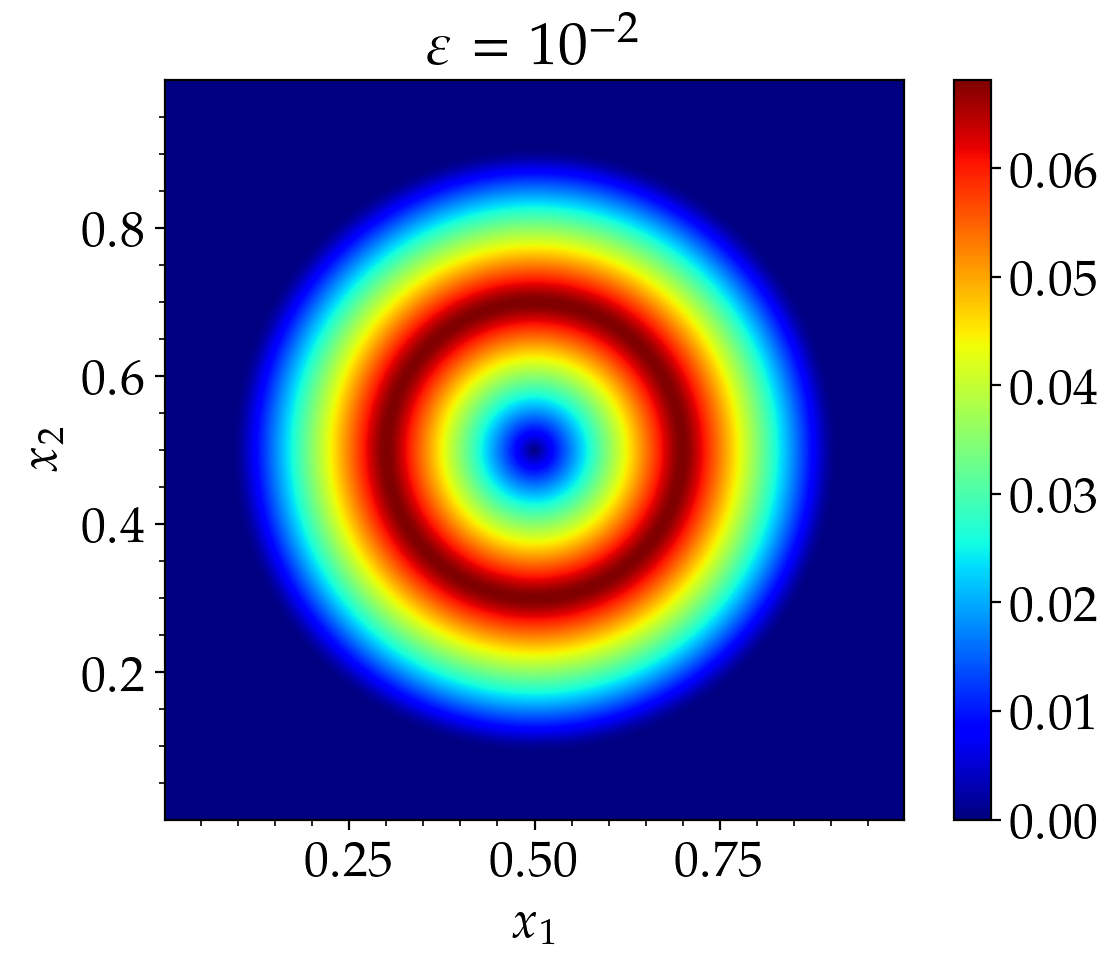}
    \includegraphics[height = 0.2\textheight]{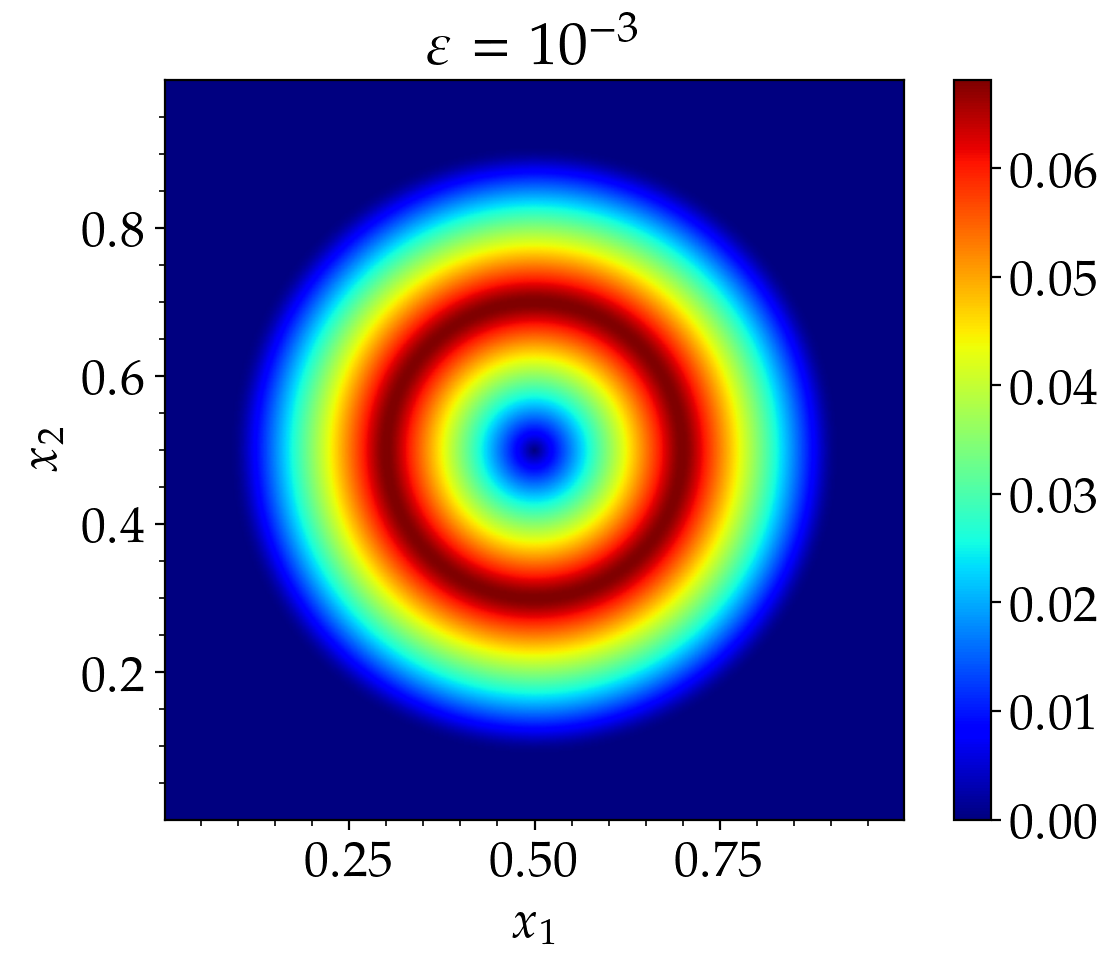}
    \caption{The flow Mach number at the final time $T = 0.1$ for different values of $\veps$ on a $512\times 512$ grid.}
    \label{fig:rel_mach}
\end{figure}

\begin{figure}
    \centering
    \includegraphics[height = 0.3\textheight]{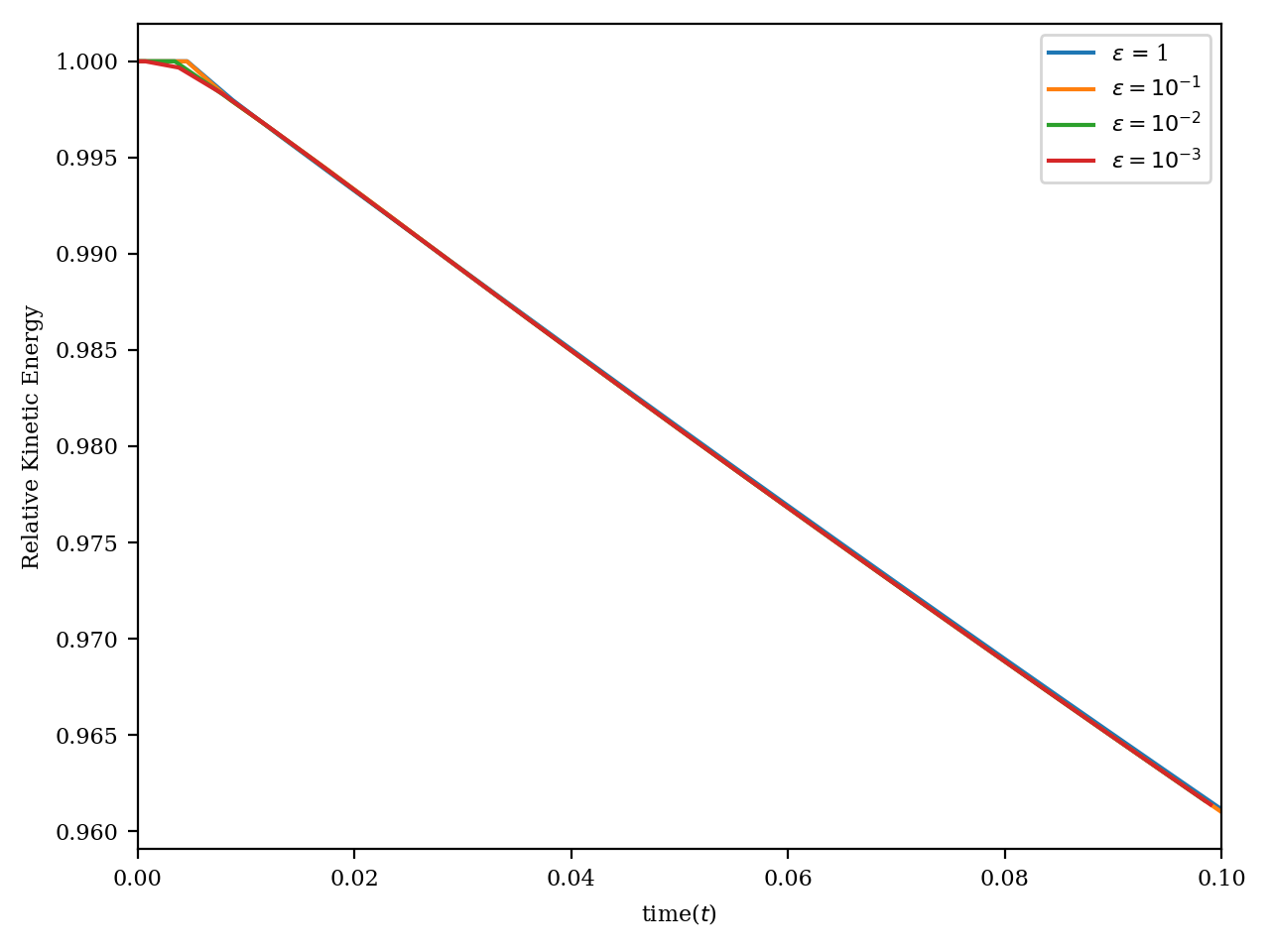}
    \caption{Relative kinetic energy over time for different values of $\veps$.}
    \label{fig:rel-ke}
\end{figure}

Now, our aim is to verify the estimates obtained in Theorem \ref{thm:asymp-err-est}. To do this, we present the experimental order of convergence for different values of $\veps$ in Table \ref{tab:cg-rate}. We compute the solutions $U_{h} = [\vrho_{h}, m_{1, h}, m_{2, h}]$ on successive grids of size $h = 2^{-j}, j = 3,\dots, 8$. Further, we compute a reference solution $U_{ref}$ on a $512\times 512$ grid, and compute the errors of the numerical solutions with respect to the reference solution in the $L^2$ norms. As we can observe, we achieve uniform first-order convergence for all the variables irrespective of the chosen value of $\veps$. Clearly, the obtained rate is better compared to the one claimed in Theorem \ref{thm:err-est}, meaning the theoretically obtained rates are somewhat suboptimal. 

\begin{center}
\begin{table}[htpb]
\begin{tabular}{|c|c|c|c|c|c|c|c|}
    \hline
    $\veps$ & $h$ & $\norm{\vrho_{h} - \vrho_{ref}}_{L^2}$ & EOC & $\norm{m_{1,h} - m_{1,ref}}_{L^2}$ & EOC & $\norm{m_{2,h} - m_{2,ref}}_{L^2}$ & EOC \\
    \hline
            &  1/8   & 7.811e-4 &  -    & 2.132e-2 & -     & 2.111e-2 & -      \\
            &  1/16  & 7.421e-4 & 0.073 & 1.799e-2 & 0.245 & 1.693e-2 & 0.318  \\
    1       &  1/32  & 1.062e-4 & 2.803 & 1.836e-3 & 3.292 & 1.829e-3 & 3.210  \\
            &  1/64  & 5.928e-5 & 0.842 & 1.038e-3 & 0.823 & 1.022e-3 & 0.839  \\
            &  1/128 & 2.924e-5 & 1.019 & 5.129e-4 & 1.017 & 5.121e-4 & 0.996  \\
            &  1/256 & 1.121e-5 & 1.382 & 1.928e-4 & 1.411 & 1.924e-4 & 1.412  \\
    \hline
            &  1/8   & 6.826e-6 &  -    & 2.921e-3 & -     & 2.911e-3 & -      \\
            &  1/16  & 2.301e-6 & 1.568 & 2.747e-3 & 0.088 & 2.643e-3 & 0.139  \\
  $10^{-1}$ &  1/32  & 8.612e-7 & 1.417 & 1.831e-3 & 0.584 & 1.831e-3 & 0.529  \\
            &  1/64  & 4.589e-7 & 0.907 & 1.035e-3 & 0.822 & 1.022e-3 & 0.841  \\
            &  1/128 & 2.105e-7 & 1.124 & 5.121e-4 & 1.015 & 5.120e-4 & 0.997  \\
            &  1/256 & 7.375e-8 & 1.513 & 1.925e-4 & 1.411 & 1.921e-4 & 1.414  \\
    \hline
            &  1/8   & 8.836e-8  &  -    & 2.787e-3 & -     & 2.787e-3 & -      \\
            &  1/16  & 2.562e-8  & 1.786 & 2.554e-3 & 0.125 & 2.554e-3 & 0.125  \\
  $10^{-2}$ &  1/32  & 8.759e-9  & 1.548 & 1.922e-3 & 0.410 & 1.901e-3 & 0.425  \\
            &  1/64  & 5.013e-9  & 0.805 & 1.162e-3 & 0.725 & 1.092e-3 & 0.799  \\
            &  1/128 & 2.219e-9  & 1.175 & 5.543e-4 & 1.068 & 5.431e-4 & 1.007  \\
            &  1/256 & 7.978e-10 & 1.476 & 2.138e-4 & 1.374 & 2.140e-4 & 1.343  \\
    \hline
            &  1/8   & 5.639e-10 &  -    & 2.826e-3 & -     & 2.826e-3 & -      \\
            &  1/16  & 2.903e-10 & 0.957 & 2.504e-3 & 0.174 & 2.504e-3 & 0.174  \\
  $10^{-3}$ &  1/32  & 8.456e-11 & 1.779 & 1.938e-3 & 0.369 & 1.938e-3 & 0.369  \\
            &  1/64  & 4.647e-11 & 0.863 & 1.081e-3 & 0.842 & 1.081e-3 & 0.842  \\
            &  1/128 & 2.235e-11 & 1.056 & 5.578e-4 & 0.954 & 5.578e-4 & 0.954  \\
            &  1/256 & 8.028e-12 & 1.477 & 2.150e-4 & 1.375 & 2.150e-4 & 1.375  \\
    \hline
\end{tabular}
\caption{Experimental order of convergence for different values of $\veps$.}
\label{tab:cg-rate}
\end{table}
\end{center}

Now, we wish to illustrate the convergence of the compressible solution towards the incompressible solution. As done in \cite{FLN+18}, we set $\veps = h = 2^{-j},\ j = 3,\dots, 9$, and we compute the following errors:
\begin{gather*}
   e^{\infty, 1}_{E_{rel}} = \sup_{1\leq n\leq N}\int_\Omega E_{rel}(\vrho^n_{h}, \bu^n_{h}\vert 1, \boldsymbol{v}(t^n,\cdot))\,\dx,\\  
   e^{2,2}_{\vrho} = \norm{\vrho_{h} - 1}_{L^2L^2},\quad e^{\infty,2}_{\vrho} = \sup_{1\leq n\leq N}\norm{\vrho^n_{h} - 1}_{L^2}, \\
   e^{2,2}_{\bu} = \norm{\bu_{h} - \boldsymbol{v}}_{L^2L^2},\quad e^{\infty,2}_{\bu} = \sup_{1\leq n\leq N}\norm{\bu^n_{h} - \boldsymbol{v}(t^n, \cdot)}_{L^2}.
\end{gather*} 
We present the errors in Table \ref{tab:comp-incomp-con-1}. Further, we also compute the errors for the case $\gamma = 1.4$ and present them in Table \ref{tab:comp-incomp-con-2}. As we can observe, the performance of the scheme is independent of the chosen value of $\gamma$. We observe nearly second-order convergence for the densities, while for the relative energy and velocity, we get first-order convergence. Therefore, the convergence rates obtained are far superior compared to the rates obtained theoretically in Theorem \ref{thm:asymp-err-est}. Therefore, the theoretically obtained estimates are still sub-optimal.
\begin{center}
\begin{table}
\begin{tabular}{|c|c|c|c|c|c|c|c|c|c|c|}
    \hline
    $h = \veps$ & $e^{\infty, 1}_{E_{rel}}$ & EOC & $e^{2,2}_\vrho$ & EOC & $e^{\infty,2}_\vrho$ & EOC & $e^{2,2}_\bu$ & EOC & $e^{\infty, 2}_\bu$ & EOC \\
    \hline 
    1/8   & 2.055e-5 & -     & 1.601e-5 & -     & 5.639e-5 & -     & 1.138e-3 & -     & 6.380e-3 & - \\
    1/16  & 5.353e-6 & 1.941 & 3.786e-6 & 2.080 & 1.390e-5 & 2.019 & 6.117e-4 & 0.895 & 3.257e-3 & 0.969 \\
    1/32  & 4.487e-6 & 0.254 & 1.058e-6 & 1.838 & 3.476e-6 & 2.000 & 4.956e-4 & 0.303 & 2.991e-3 & 0.122 \\
    1/64  & 1.694e-6 & 1.405 & 2.630e-7 & 2.009 & 8.690e-7 & 2.000 & 3.010e-4 & 0.719 & 1.839e-3 & 0.701\\
    1/128 & 6.491e-7 & 1.384 & 6.713e-8 & 1.970 & 2.173e-7 & 1.999 & 1.904e-4 & 0.660 & 1.138e-3 & 0.692 \\
    1/256 & 2.363e-7 & 1.457 & 1.698e-8 & 1.982 & 5.450e-8 & 1.995 & 9.859e-5 & 0.949 &
    5.890e-4 & 0.950 \\
    1/512 & 8.467e-8 & 1.481 & 4.286e-9 & 1.986 & 1.414e-8 & 1.946 & 5.198e-5 & 0.923 &
    2.813e-4 & 0.933  \\
    \hline
\end{tabular}
\caption{Convergence of the compressible solution to the incompressible solution ($\gamma = 2$).}
\label{tab:comp-incomp-con-1}
\end{table}
\end{center}

\begin{center}
\begin{table}
\begin{tabular}{|c|c|c|c|c|c|c|c|c|c|c|}
    \hline
    $h = \veps$ & $e^{\infty, 1}_{E_{rel}}$ & EOC & $e^{2,2}_\vrho$ & EOC & $e^{\infty,2}_\vrho$ & EOC & $e^{2,2}_\bu$ & EOC & $e^{\infty, 2}_\bu$ & EOC \\
    \hline 
    1/8   & 2.348e-5 & -     & 1.664e-5 & -     & 5.823e-5 & -     & 1.230e-3 & -     & 6.831e-3 & - \\
    1/16  & 5.763e-6 & 2.026 & 3.890e-6 & 2.096 & 1.404e-5 & 2.051 & 6.447e-4 & 0.932 & 3.384e-3 & 1.013 \\
    1/32  & 4.561e-6 & 0.337 & 1.084e-6 & 1.843 & 3.541e-6 & 1.987 & 5.056e-4 & 0.350 & 3.017e-3 & 0.165 \\
    1/64  & 1.703e-6 & 1.420 & 2.700e-7 & 2.005 & 8.931e-7 & 1.987 & 3.087e-4 & 0.711 & 1.844e-3 & 0.709\\
    1/128 & 6.515e-7 & 1.386 & 6.914e-8 & 1.965 & 2.240e-7 & 1.995 & 2.024e-4 & 0.608 & 1.141e-3 & 0.693 \\
    1/256 & 2.368e-7 & 1.460 & 1.752e-8 & 1.980 & 5.603e-8 & 1.999 & 1.051e-4 & 0.945 &
    5.685e-4 & 0.961 \\
    1/512 & 8.480e-8 & 1.483 & 4.423e-9 & 1.985 & 1.414e-8 & 1.986 & 5.546e-5 & 0.922 &
    2.981e-4 & 0.931  \\
    \hline
\end{tabular}
\caption{Convergence of the compressible solution to the incompressible solution ($\gamma = 1.4$).}
\label{tab:comp-incomp-con-2}
\end{table}
\end{center}

\section{Conclusion}
\label{sec:conc}

We derived uniform error estimates for a semi-implicit-in-time,
energy-stable finite volume scheme approximating the compressible Euler
system parameterised by the Mach number $\veps$. Stability is ensured
via a stabilisation technique that introduces a shifted velocity,
proportional to the stiff pressure gradient, into the convective
fluxes, while consistency across all Mach number regimes enables the
use of the relative energy framework. For a fixed Mach number
$\veps>0$, we proved convergence of the numerical solutions toward a
strong solution of the compressible Euler system with error bounds
uniform in the discretisation parameters. In the low Mach number
regime, the obtained error estimates  yield the convergence toward a strong
solution of the incompressible Euler system as $\veps$ and the
discretisation parameters simultaneously tend to zero. The numerical
experiments are in agreement with the theoretical analysis and
confirm the uniform accuracy and asymptotic-preserving behaviour of
the proposed scheme.

\appendix

\section{Proof of Theorem \ref{thm:eng-stab}}

The proof of this theorem is divided into several steps. We first establish discrete variants of the internal and kinetic energy balances, and proceed to use them to derive the total energy inequality.

\subsection{Internal Energy Balance}

We multiply the mass balance \eqref{eqn:disc-mss} by $P(\rk^{n+1})$. Following the calculations detailed in \cite{AA24}, we obtain the following internal energy balance:
\begin{equation}
\label{eqn:int-eng-bal}
  \frac{1}{\delt}(P(\rk^{n+1}) - P(\rk^n)) + \sum_{\sink}\frac{\abssig}{\absk}H^{n+1}_{\sk} + p^{n+1}_K\sum_{\sink}\frac{\abssig}{\absk}(u^n_\sk - \delta u^n_\sk) + R^{n+1}_{K,\delt} = 0,
\end{equation}
where 
\[
  H^{n+1}_\sk = P(\vrho^{n+1}_{\sigma,up})w^n_\sk - \diff{P(\vrho^{n+1})}_{\sigma}.
\]
The remainder term $R^{n+1}_{K,\delt}$ is given by 
\begin{equation}
\label{eqn:int-eng-rem}
  R^{n+1}_{K,\delt} = \frac{(\varrho_K^{n+1}-\varrho_K^n)^2}{2\Delta t}P^{\prime\prime}(\varrho_{K}^{n+1/2})+\sumintK{}\frac{(\varrho_K^{n+1}-\varrho_L^{n+1})^2}{2}P^{\prime\prime}(\widetilde{\varrho}^{n+1}_{\sigma})(1 -(w_{\sigma,K}^n)_{-}),
\end{equation}
where $\varrho_K^{n+\frac{1}{2}}\in\lbrack\min\{\varrho_K^n,\varrho_K^{n+1}\},\max\{\varrho_K^n,\varrho_K^{n+1}\}\rbrack$ and $\widetilde{\varrho}^{n+1}_\sigma \in \lbrack\min\{\varrho_K^{n+1}, \varrho^{n+1}_L\}, \max\{\varrho_K^{n+1}, \varrho^{n+1}_L\}\rbrack$. The convexity of $P$ ensures that $R^{n+1}_{K,\delt}\geq 0$. 

Next, by the definition of the relative internal energy, cf.\ \eqref{eqn:rel-pot}, note that
\[
  \frac{1}{\delt}(\Pi(\rk^{n+1}\vert 1) - \Pi(\rk^{n}\vert 1)) = \frac{1}{\delt}(P(\rk^{n+1}) - P(\rk^n)) - \frac{P^\prime(1)}{\delt}(\rk^{n+1} - \rk^n).
\]
Using the internal energy balance \eqref{eqn:int-eng-bal} and the mass balance \eqref{eqn:disc-mss} to simplify the above expression will yield the following positive renormalisation identity:
\begin{equation}
\label{eqn:pos-renorm-idt}
  \frac{1}{\delt}(\Pi(\rk^{n+1}\vert 1) - \Pi(\rk^{n}\vert 1)) + \sum_{\sink}\frac{\abssig}{\absk}(H^{n+1}_\sk - P^\prime(1)F^{n+1}_\sk) + p^{n+1}_K\sum_{\sink}\frac{\abssig}{\absk}(u^n_\sk - \delta u^n_\sk) + R^{n+1}_{K,\delt} = 0.
\end{equation}

\subsection{Kinetic Energy Balance}

By rewriting the time derivative term in momentum balance \eqref{eqn:disc-mom} as 
$\varrho_K^{n+1}\bu_K^{n+1} - \varrho_K^n\bu_K^n = \varrho_K^{n+1}(\bu_K^{n+1} - \bu_K^n) + \bu_K^n(\varrho_K^{n+1} - \varrho_K^n)$ and using the mass balance \eqref{eqn:mss-bal} to simplify, we obtain the following velocity balance equation:
\begin{equation}
\label{eqn:vel-bal}
    \frac{\varrho_K^{n+1}(\bu_K^{n+1} - \bu_K^n)}{\Delta t} + \sumintK{}(\bu^n_L - \bu^n_K)F^{n+1,-}_{\sigma,K} + \frac{1}{\varepsilon^2}(\gradt p^{n+1})_K - \sumintK{}\diff{\bu^n}_\sigma = 0.
\end{equation}
Taking the dot product of the above velocity balance equation with $\bu_K^n$ and multiplying \eqref{eqn:disc-mss} by $\modL{\bu_K^n}^2/2$, summing the two expressions and simplifying, we obtain the following kinetic energy balance
\begin{equation}
\label{eqn:kin-eng-bal}
  \frac{1}{\Delta t}\biggl(\frac{1}{2}\varrho_K^{n+1}\modL{\bu_K^{n+1}}^2 - \frac{1}{2}\varrho_K^n\modL{\bu_K^n}^2\biggr) + \sumintK{}Q^{n+1}_{\sigma,K} + \frac{1}{\varepsilon^2}(\gradt p^{n+1})_K\cdot\bu_K^n
  = S^{n+1}_{K,\Delta t},
\end{equation}
where $Q^{n+1}_{\sigma,K} = F^{n+1}_{\sigma,K} \dfrac{\modL{\bu^n_{\sigma,up}}^2}{2} - \frac{1}{2}\diff{\modL{\bu^n}^2}_{\sigma}$ and the remainder term $S^{n+1}_{K,\Delta t}$ is given by 
\begin{equation}
    S^{n+1}_{K,\Delta t} = \frac{\varrho_K^{n+1}}{2\Delta t}\modL{\bu_K^{n+1} - \bu_K^n}^2 - \sumintK{}(1 - F^{n+1,-}_{\sigma,K})\frac{\modL{\bu^n_L - \bu_K^n}^2}{2}. 
\end{equation}

Note that we cannot determine the sign of the above remainder term and thus, we proceed to estimate it as done in \cite{AA24}. Using the velocity update \eqref{eqn:vel-bal} to obtain an estimate on the term $\frac{\varrho_K^{n+1}}{2\Delta t}\modL{\bu_K^{n+1} - \bu_K^n}^2$ appearing in the expression of $S^{n+1}_{K,\delt}$ and proceeding to simplify, we get the following estimate on $S^{n+1}_{K,\delt}$:
\begin{equation}
  \begin{split}
    S^{n+1}_{K,\delt} \leq&\half\Bigl(\sum_{\sink}\frac{\abssig}{\absk}\abs{\diff{\bu^n}_{\sigma}}^2F^{n+1,-}_\sk\Bigr)\Bigl(1 + \frac{3\delt}{\rk^{n+1}}\sum_{\sink}\frac{\abssig}{\absk}F^{n+1, -}_\sk\Bigr) \\
    &+ \Bigl(\frac{3\delt}{2\rk^{n+1}}\frac{\abs{\partial K}}{\absk} - \half\Bigr)\Bigl(\sum_{\sink}\frac{\abssig}{\absk}\abs{\diff{\bu^n}_{\sigma}}^2\Bigr) + \frac{3\delt}{2\veps^4\rk^{n+1}}\abs{(\gradt p^{n+1})_K}^2.
  \end{split}
\end{equation}
Under the conditions \textit{(2)} and \textit{(3)} given in Theorem \ref{thm:eng-stab}, observe that we get 
\begin{equation}
\label{eqn:rem-est}
  S^{n+1}_{K,\delt} \leq -\frac{3\beta}{2}\sum_{\sink}\frac{\abssig}{\absk}\abs{\diff{\bu^n}_{\sigma}}^2 + \frac{3\delt}{2\veps^4\rk^{n+1}}\abs{(\gradt p^{n+1})_K}^2.
\end{equation}

\subsection{Total Energy Inequality}

We multiply \eqref{eqn:int-eng-bal} with $\absk/\veps^2$, multiply \eqref{eqn:kin-eng-bal} with $\absk$, add the two expressions, and sum the resulting expression over all $K\in\T$. This causes the convective flux terms to sum to zero. Further, using discrete summation by parts, observe that
\[
  \sum_{K\in\T}\absk\Bigl(p^{n+1}_K\Bigl(\sum_{\sink}\frac{\abssig}{\absk} u^n_\sk\Bigr) + (\gradt p^{n+1})_K\cdot\bu^n_K\Bigr) = 0.
\]
The resulting expression then reads
\begin{gather}
  \sum_{K\in\T}\frac{\absk}{\delt}\Bigl\lbrack\Bigl(\half\rk^{n+1}\abs{\bu^{n+1}_K}^2 + \frac{1}{\veps^2}P(\rk^{n+1})\Bigr) - \Bigl(\half\rk^{n}\abs{\bu^{n}_K}^2 + \frac{1}{\veps^2}P(\rk^{n})\Bigr)\Bigr\rbrack \nonumber \\
  \leq -\frac{1}{\veps^2}\sum_{\sigma\in\E_{int}}\abs{\dsig}\delta\bu^{n+1}_{\sigma}\cdot(\gradd p^{n+1})_\sigma + \sum_{K\in\T}\absk S^{n+1}_{K,\delt} -\frac{1}{\veps^2}\sum_{K\in\T}\absk R^{n+1}_{K,\delt}.
\end{gather}
Next, substituting for the stabilisation term as $\delta\bu^{n+1}_\sigma = \frac{\eta\delt}{\veps^2}(\gradd p^{n+1})_\sigma$, along with the estimate for the remainder term $S^{n+1}_{K,\delt}$ from \eqref{eqn:rem-est} in the above expression. Simplifying to combine the pressure terms and performing straightforward calculations will yield the following inequality:
\begin{gather}
  \sum_{K\in\T}\frac{\absk}{\delt}\Bigl\lbrack\Bigl(\half\rk^{n+1}\abs{\bu^{n+1}_K}^2 + \frac{1}{\veps^2}P(\rk^{n+1})\Bigr) - \Bigl(\half\rk^{n}\abs{\bu^{n}_K}^2 + \frac{1}{\veps^2}P(\rk^{n})\Bigr)\Bigr\rbrack \nonumber \\
  \leq -\frac{\delt}{\veps^4}\sum_{\sigma\in\E_{int}}\frac{\abssig^2}{\abs{\dsig}}\Bigl(\eta - \frac{3d}{2}\Bigl\{\mskip-5mu\Bigl\{ \frac{1}{\vrho^{n+1}} \Bigr\}\mskip-5mu\Bigr\}_{\sigma}\Bigr)\abs{\diff{p^{n+1}}_{\sigma}}^2 - \frac{3\beta}{2}\sum_{K\in\T}\sum_{\sink}\abssig\abs{\diff{\bu^n}_{\sigma}}^2 -\frac{1}{\veps^2}\sum_{K\in\T}\absk R^{n+1}_{K,\delt}. \label{eqn:eng-ineq-1}
\end{gather}
Every term on the right-hand side of the above equation is non-positive, and this yields the desired stability of the scheme \eqref{eqn:loc-eng-ineq}. To derive the local entropy inequality \eqref{eqn:loc-ent-ineq}, we multiply \eqref{eqn:pos-renorm-idt} with $\absk/\veps^2$, multiply \eqref{eqn:kin-eng-bal} with $\absk$, sum the two, and follow the same steps as we have done so far. 

As a consequence of \eqref{eqn:eng-ineq-1}, we also obtain the following global entropy estimate for each $1\leq n\leq N$:
\begin{gather}
  \sumK{}\Bigl(\half\rk^n\abs{\bu^n_K}^2 + \frac{1}{\veps^2}\Pi(\rk^n\vert 1)\Bigr) +\frac{\Delta t^2}{\varepsilon^4}\sum_{m=0}^{n-1}\sum_{\sigma\in\mathcal{E}_{int}}\frac{\abssig^2}{\abs{\dsig}}\Bigl(\eta - \frac{3d}{2}\Bigl\{\mskip-5mu\Bigl\{ \frac{1}{\vrho^{m+1}} \Bigr\}\mskip-5mu\Bigr\}_{\sigma}\Bigr)\abs{\diff{p^{m+1}}_{\sigma}}^2 \nonumber\\
  +\sum_{n = 0}^{m-1}\Delta t\sum_{K\in\mathcal{T}}\sum_{\sigma\in\mathcal{E}(K)}\frac{\modL{\sigma}}{2\varepsilon^2}\diff{\varrho^{m+1}}^2_{\sigma}P^{\prime\prime}(\tilde{\vrho}^{m+1}_\sigma)
  + \frac{3\beta}{2}\sum_{m = 0}^{n-1}\Delta t\sum_{K\in\mathcal{T}}\sum_{\sigma\in\mathcal{E}(K)}\modL{\sigma}\modL{\diff{\bu^m}_{\sigma}}^2 \label{eqn:glob-ent-est}\\
  +\sum_{m=0}^{n-1}\sumK{}\frac{(\varrho_K^{m+1}-\varrho_K^m)^2}{2\varepsilon^2}P^{\prime\prime}(\varrho_{K}^{m+1/2})\leq\sumK{}\Bigl(\half\rk^0\abs{\bu^0_K}^2 + \frac{1}{\veps^2}\Pi(\rk^0\vert 1)\Bigr) . \nonumber
\end{gather}
The initial datum being well-prepared, cf.\ Definition \ref{def:well-prep}, will ensure that the right-hand side of the above inequality remains bounded independent of $\veps$.

\section{Proof of Theorem \ref{thm:cons}}

As a consequence of Hypothesis \ref{hyp:bnd}, we can obtain the following a priori estimates that are uniform with respect to the mesh parameters and $\veps$ from the global entropy estimate \eqref{eqn:glob-ent-est}, for each $1\leq m\leq N$.
\begin{gather}
  \sum_{n=0}^{m-1}\Delta t\sum_{\sigma\in\mathcal{E}_{int}}\modL{\sigma}\diff{\varrho^{n+1}}^2_{\sigma}\lesssim\varepsilon^2, \quad \sum_{n=0}^{m-1}\Delta t \ \norm{\gradd\varrho^{n+1}}^2_{L^2} \lesssim h^{-1}\varepsilon^2, \quad \sum_{n=0}^{m-1}\sum_{K\in\T}\abs{\varrho^{n+1}_K - \varrho^n_K}^2\lesssim\varepsilon^2, \label{APP:den-est}\\
  \sum_{n = 0}^{m-1}\Delta t \sum_{\sigma\in\mathcal{E}_{int}}\modL{\sigma}\modL{\diff{\bu^n}_{\sigma}}^2\lesssim 1, \quad  \sum_{n=0}^{m-1}\Delta t \ \norm{\gradd\bu^n}^2_{L^2} \lesssim h^{-1},  \label{APP:vel-est}\\
  \sum_{n=0}^{m-1}\sum_{\sigma\in\E_{int}}\abs{\dsig}\abs{\delta\bu^{n+1}_{\sigma}}^2\lesssim 1. \label{APP:stab-est}
\end{gather}

For the proof of consistency, we introduce the following reconstruction operator that transforms piecewise constant functions on the primal grid into piecewise constant functions on the dual mesh. 
\begin{definition}[Reconstruction Operator]
\label{def:recon-op}
  Let $q\in\Lt(\Omega)$ be given. We define the reconstruction of $q$ on the dual mesh as $\mathcal{R}_\T q = \sum_{\sigma\in\E}(\mathcal{R}_\T q)_{\sigma}1_{\dsig}$, where
  \[
    (\mathcal{R}_\mathcal{T} q)_\sigma = \begin{cases}
        \mu_\sigma q_K + (1-\mu_\sigma)q_L,&\text{if }\sigma=K\vert L\in\mathcal{E}_{int}, \\
        q_K, &\text{if }\sigma\in\mathcal{E}_{ext}.
    \end{cases}
\]
\end{definition}

For $1\leq p<\infty$, we have the following available estimates for the reconstruction operator:
\begin{align}
  &\norm{\mathcal{R}_\mathcal{T} q}_{L^p}\lesssim\norm{q}_{L^p}, \label{APP:recon-stab}\\
  &\norm{\mathcal{R}_\mathcal{T} q - q}_{L^p}\lesssim h\norm{\gradd q}_{L^p} \label{APP:recon-est}.
\end{align}

\subsection{Consistency of the mass balance}

For any time-dependent piecewise constant function, we denote $z^n_h = \sum_{K\in \T} z^n_K 1_K$.
Let $\varphi\in W^{2,\infty}((0,T)\times \Omega)$, and let $\varphi^{n+1}_K = (\Pi_\T\varphi(t^{n+1},\cdot))_K$ denote its interpolate on the spacetime grid $(\mathcal{T},\Delta t)$. For $\tau\in(0,T)$, let $0\leq m\leq N-1$ be such that $t^m\leq \tau \leq t^{m+1}$. Multiplying \eqref{eqn:disc-mss} by $\Delta t \modL{K}\varphi_K^{n+1}$, summing over all $K \in \mathcal{T}$ and from $n=0$ to $n=m-1$, and repeatedly using discrete summation by parts will yield \eqref{eqn:cons-mass}, where the consistency error $\mathcal{S}^{mass}_{h,\delt} = \sum_{i = 1}^7 R_i$. The exact expressions and the estimates for the remainder terms $R_i$ are as follows:
\begin{gather*}
  \modL{R_1} = \modL{\sum_{n=0}^{m-1}\sum_{K\in\mathcal{T}}\int_{t^n}^{t^{n+1}}\int_K \varrho_K^{n}\biggl(\partial_t\varphi - \frac{\varphi^{n+1}_K - \varphi^n_K}{\Delta t}\biggr)\,\dbx\,\dt } \lesssim \Delta t\norm{\varphi}_{W^{2,\infty}}, \\
  \modL{R_2} = \modL{\sum_{K\in\mathcal{T}}\int_K\varrho^m_K(\varphi(\tau,\cdot)-\varphi^m_K)\,\dbx - \sum_{K\in\T}\int_K\varrho^0_K(\varphi(0,\cdot)-\varphi^0_K)\dbx} \lesssim h\norm{\varphi}_{W^{2,\infty}},
\end{gather*}
where the estimates follow due to standard interpolation inequalities and Hypothesis \ref{hyp:bnd};
\begin{equation*}
  \modL{R_3} = \modL{\sum_{n=0}^{m-1}\int_{t^n}^{t^{n+1}}\int_\Omega \varrho^{n+1}_\mathcal{D}\bu^{n}_\mathcal{D}\cdot\paraL{\grad\varphi - \gradd\varphi^{n+1}}\dbx\dt} \lesssim h\norm{\varphi}_{W^{2,\infty}}, \\
\end{equation*}
where $\vrho^{n+1}_\D = \sum_{\sigma\in\E}\vrho^{n+1}_{\sigma,up}1_{\dsig}$ and $\bu^n_\D = \sum_{\sigma\in\E}\avg{\bu^n}_{\sigma}1_{\dsig}$;

\begin{align*}
  \modL{R_4} &= \modL{\sum_{n=0}^{m-1}\int_{t^n}^{t^{n+1}}\int_\Omega\paraL{\varrho^{n+1}_h\bu^n_h - \varrho^{n+1}_\mathcal{D}\bu^{n}_\mathcal{D}}\cdot\grad\varphi\,\dbx\,\dt } \nonumber \\ 
  &\lesssim \norm{\varphi}_{W^{2,\infty}}\sum_{n=0}^{m-1}\int_{t^n}^{t^{n+1}} \paraL{\norm{\varrho^{n+1}_h - \varrho^{n+1}_\mathcal{D}}_{L^2} + \norm{\bu^{n}_h - \bu^n_\mathcal{D}}_{L^2}} \,\dt \nonumber \\ 
  &\lesssim  \norm{\varphi}_{W^{2,\infty}}\sum_{n=0}^{m-1}\Delta t h\norm{\gradd\varrho^{n+1}}_{L^2} +  \sum_{n=0}^{m-1}\Delta t h\norm{\gradd\bu^n}_{L^2}  \\ 
  &\lesssim (\varepsilon \sqrt{h} + \sqrt{h})\norm{\varphi}_{W^{2,\infty}}
\end{align*}
wherein we have utilised Hypothesis \ref{hyp:bnd}, the estimates on the reconstruction operator \eqref{APP:recon-est}, and the estimates  \eqref{APP:den-est}, \eqref{APP:vel-est};
\begin{align*}
  \modL{R_5} &= \modL{\sum_{n=0}^{m-1} \int_{t_n}^{t_{n+1}} \int_{\Omega} \paraL{\varrho_{h}^{n} - \varrho_{h}^{n+1}}\bu_{h}^{n} \cdot \grad \varphi \,\dbx \,\dt} \nonumber \\ &\lesssim \norm{\grad\varphi}_{L^{\infty}L^{\infty}}\sum_{n=0}^{m-1} \int_{t_n}^{t_{n+1}} \norm{\varrho_{h}^{n} - \varrho_{h}^{n+1}}_{L^2}  \,\dt \lesssim \varepsilon \sqrt{\Delta t}\norm{\varphi}_{W^{2,\infty}}
\end{align*}
as a consequence of Hypothesis \ref{hyp:bnd} and the estimate \eqref{APP:den-est};
\begin{align*}
  \modL{R_6} &= \modL{ \sum_{n=0}^{m-1}\Delta t \sum_{\sigma\in\mathcal{E}_{int}}\modL{D_{\sigma}}\varrho^{n+1}_{\sigma,up}\delta \bu^{n+1}_{\sigma}\cdot\gradd\varphi^{n+1}_\sigma} \lesssim \sqrt{\Delta t}\norm{\varphi}_{W^{2,\infty}}, 
\end{align*}
where the estimate follows using Hypothesis \ref{hyp:bnd} and \eqref{APP:stab-est};
\begin{align*}
   \modL{R_7} &= \modL{-\sum_{n=0}^{m-1}\Delta t \sum_{K\in\mathcal{T}}\sum_{\sigma \in \mathcal{E}(K)}\modL{\sigma}\diff{\varrho^{n+1}}_{\sigma}\varphi^{n+1}_K} = \modL{\sum_{n=0}^{m-1}\Delta t \sum_{\sigma\in\mathcal{E}}\modL{\sigma}\diff{\varrho^{n+1}}_{\sigma}\diff{\varphi^{n+1}}_\sigma } \nonumber \\ 
   &\lesssim \varepsilon \sqrt{h}\norm{\varphi}_{W^{2,\infty}}, 
\end{align*}
and we obtain the above estimate exactly as done for the term $E_4$ on page 336 of \cite{FLM+21a}, and by using the estimate \eqref{APP:den-est}.

Consequently, we obtain $ \modL{\mathcal{S}^{mass}_{h,\Delta t}}\lesssim C_\varphi((1 + \varepsilon)(\sqrt{h} +\sqrt{\Delta t}) + (h + \Delta t))$, where $C_\varphi>0$ is a constant which depends only on $\varphi$. 

\subsection{Consistency of the momentum balance}

With some minor modifications, the consistency of the time-derivative and flux terms of the momentum balance \eqref{eqn:disc-mom} will also follow along in the same fashion as the mass balance. The only additional term is the pressure term, which is handled on a case-by-case basis for fixed $\veps>0$ and variable $\veps$. 

\subsection{Case I - Fixed $\veps>0$}

In this case, we fix $\veps>0$ (for simplicity, setting $\veps = 1$) and obtain the error estimates between the numerical solutions and a strong solution of the compressible Euler system. Denoting $p_{h,\delt} = \sum_{n=0}^{N-1}\sum_{K\in\T}p^n_K1_K1_{[t^n, t^{n+1})}$, $p^m_h = \sum_{K\in\T} p^m_K 1_K$ for any $0\leq m\leq N$, the consistency of the pressure term will involve accounting for the projection error between the continuous and discrete divergence of the test function, along with the error that shifts the time-level of the pressure term from $n+1$ to $n$. For $\uphi\in W^{2,\infty}(\odom;\R^d)$, we get
\begin{align*}
    \sum_{n=0}^{N-1}\delt\sum_{K\in\T}\absk (\gradt p^{n+1})_K\cdot\uphi^{n+1}_K =& 
    - \int_0^T\int_\Omega p_{h,\delt}\Div\uphi\,\dbx\,\dt \\
    &-\underset{Q_1}{\underleftrightarrow{\sum_{n=0}^{N-1}\int_{t^n}^{t^{n+1}}\int_{\Omega} p^{n+1}_{h}(\divt\uphi^{n+1} - \Div\uphi)\,\dbx\,\dt}}, \\
    &-\underset{Q_2}{\underleftrightarrow{\sum_{n=0}^{N-1}\int_{t^n}^{t^{n+1}}\int_\Omega(p^{n+1}_h - p^n_h)\Div\uphi\,\dbx\,\dt}}.
\end{align*}
The residuals $Q_1$ and $Q_2$ can be straightforwardly bounded. Indeed, for $Q_1$, the boundedness of the density along with standard projection estimates will yield $\abs{Q_1}\lesssim h\norm{\uphi}_{W^{2,\infty}}$. To handle $Q_2$, we can apply the mean value theorem to obtain 
\[
    p^{n+1}_K - p^n_K = p^\prime(\xi^{n+\half}_K)(\vrho^{n+1}_K - \vrho^n_K),
\]
where $\xi^{n+\half}_K\in\mathrm{co}[\rk^n, \rk^{n+1}]$. Consequently, using the boundedness of the density and the a priori estimate $\sum_{n=0}^{N-1}\sum_{K\in\T}\abs{\rk^{n+1} - \rk^n}^2\lesssim 1$ (see \Cref{APP:den-est}, we arrive at 
\[
    \abs{Q_2}\lesssim\norm{\uphi}_{W^{2,\infty}}\sum_{n=0}^{N-1}\delt\sum_{K\in\T}\abs{\rk^{n+1} - \rk^n} \lesssim \sqrt{\delt}\norm{\uphi}_{W^{2,\infty}},
\]

Consequently, in this case, we obtain $ \modL{\mathcal{S}^{mom}_{h,\Delta t}}\lesssim C_{\boldsymbol{\varphi}}((\sqrt{h} +\sqrt{\Delta t}) + (h + \Delta t))$ and $C_{\boldsymbol{\varphi}}>0$ is a constant which depends only on $\boldsymbol{\varphi}$.

\subsection{Case II - Variable $\veps$} In this case, $\veps$ is variable and we obtain the error estimates between the numerical solutions and a strong solution of the incompressible Euler system. In the proof of the error estimates, the pressure term plays no role as we are testing the momentum balance with a divergence-free function. Keeping this in mind, it is sufficient to show that we can bound the projection error arising due to the difference between the discrete and continuous divergence of the test function, uniformly in $\veps$. We do not need to shift the time-levels here, since the term as a whole will end up disappearing. With this insight, we proceed to reformulate the pressure consistency in terms of the second-order pressure term. Let $\pi^{n+1}_K = \dfrac{1}{\veps^2}(p^{n+1}_K - m(p^{n+1}_h))$, $\pi_{h,\delt,\veps} = \sum_{n=0}^{N-1}\sum_{K\in\T}\pi^{n+1}_K1_K1_{[t^n, t^{n+1})}$ denote the second order pressure term, where $m(p^{n+1}_h)$ is the average of $p^{n+1}_h$. Invoking the discrete Poincar\'{e} inequality, observe that
\begin{align}
    \norm{p^{n+1}_h - m(p^{n+1}_h)}_{L^2}^2\lesssim\norm{\gradd p^{n+1}}_{L^2}^2\implies \frac{\delt^2}{\veps^4}\norm{p^{n+1}_h - m(p^{n+1}_h)}^2_{L^2}\lesssim\norm{\du^{n+1}}_{L^2}^2.
\end{align}
Consequently, recalling the estimate $\sum_{n=0}^{N-1}\norm{\du^{n+1}}_{L^2}^2\lesssim 1$ (see \Cref{APP:stab-est}), we can deduce that $\norm{\pi_{h,\delt,\veps}}_{L^2 L^2}\lesssim \delt^{-\half}$. Now, for the consistency of the pressure term, note that
\begin{align*}
    \frac{1}{\veps^2}\sum_{n = 0}^{N-1}\delt\sum_{K\in\T}\absk(\gradt p^{n+1})_K\cdot\uphi^{n+1}_K =& \sum_{n=0}^{N-1}\delt\sum_{K\in\T}\absk(\gradt\pi^{n+1})_K\cdot\uphi^{n+1}_K \\
    = &-\int_0^T\int_\Omega\pi_{h,\delt,\veps}\,\Div\uphi\,\dbx\,\dt \\
    &- \sum_{n=0}^{N-1}\int_{t^{n}}^{t^{n+1}}\int_\Omega\pi^{n+1}_{h}(\divt\uphi^{n+1} - \Div\uphi)\,\dbx\,\dt.
\end{align*}
The residual will now be bounded by $\sqrt{h}\norm{\uphi}_{W^{2,\infty}}$, since $\norm{\pi_{h,\delt,\veps}}_{L^2L^2}\lesssim\delt^{-\half}$ and the projection error will be bounded by $h\norm{\uphi}_{W^{2,\infty}}$. Further, if $\uphi$ is divergence-free, the first term ends up vanishing, and hence, there is no need to do a shift in the time-levels as we did in the earlier case.

Consequently, we obtain $ \modL{\mathcal{S}^{mom}_{h,\Delta t}}\lesssim C_{\boldsymbol{\varphi}}((1+\varepsilon)(\sqrt{h} +\sqrt{\Delta t}) + (h + \Delta t))$, where $C_{\boldsymbol{\varphi}}>0$ is a constant which depends only on $\boldsymbol{\varphi}$.

\bibliographystyle{abbrv}
\bibliography{ref}

@book {GR21,
    AUTHOR = {Godlewski, Edwige and Raviart, Pierre-Arnaud},
     TITLE = {Numerical approximation of hyperbolic systems of conservation
              laws},
    SERIES = {Applied Mathematical Sciences},
    VOLUME = {118},
 PUBLISHER = {Springer-Verlag, New York},
      YEAR = {2021},
     PAGES = {xiii+840},
      ISBN = {978-1-0716-1342-9; 978-1-0716-1344-3},
   MRCLASS = {65M06 (35A35 35L65 65-02 76-02 76Mxx 76N10)},
  MRNUMBER = {4331351},
       DOI = {10.1007/978-1-0716-1344-3},
}

@book {Daf00,
    AUTHOR = {Dafermos, Constantine M.},
     TITLE = {Hyperbolic conservation laws in continuum physics},
    SERIES = {Grundlehren der mathematischen Wissenschaften [Fundamental
              Principles of Mathematical Sciences]},
    VOLUME = {325},
 PUBLISHER = {Springer-Verlag, Berlin},
      YEAR = {2000},
     PAGES = {xvi+443},
      ISBN = {3-540-64914-X},
   MRCLASS = {35L65 (35-02 35L67 74A15 74H20 76A02 76L05 76N10)},
  MRNUMBER = {1763936},
MRREVIEWER = {Denis\ Serre},
       DOI = {10.1007/3-540-29089-3\_14},
}

@book {Tor97,
    AUTHOR = {Toro, Eleuterio F.},
     TITLE = {Riemann solvers and numerical methods for fluid dynamics: A practical introduction},      
 PUBLISHER = {Springer-Verlag, Berlin},
      YEAR = {1997},
     PAGES = {xviii+592},
      ISBN = {3-540-61676-4},
   MRCLASS = {76M20 (35L65 65M12 76N15)},
  MRNUMBER = {1474503},
MRREVIEWER = {Randall J. LeVeque},
       DOI = {10.1007/978-3-662-03490-3},
}

@book {FN17,
    AUTHOR = {Feireisl, Eduard and Novotn\'{y}, Anton\'{\i}n},
     TITLE = {Singular limits in thermodynamics of viscous fluids},
    SERIES = {Advances in Mathematical Fluid Mechanics},
 PUBLISHER = {Birkh\"{a}user/Springer, Cham},
      YEAR = {2017},
     PAGES = {xlii+524},
      ISBN = {978-3-319-63780-8; 978-3-319-63781-5},
   MRCLASS = {35-02 (35B25 35Q35 76-02 76N10)},
  MRNUMBER = {3729430},
}

@book {FLM+21a,
    AUTHOR = {Feireisl, Eduard and Luk\'{a}\v{c}ov\'{a}-Medvid'ov\'{a}, M\'{a}ria and Mizerov\'{a},
              Hana and She, Bangwei},
     TITLE = {Numerical analysis of compressible fluid flows},
    SERIES = {MS\&A. Modeling, Simulation and Applications},
    VOLUME = {20},
 PUBLISHER = {Springer, Cham},
      YEAR = {2021},
     PAGES = {lx+483},
      ISBN = {978-3-030-73787-0; 978-3-030-73788-7},
   MRCLASS = {76Nxx (76-02 76M12)},
  MRNUMBER = {4390192},
       DOI = {10.1007/978-3-030-73788-7},
}

@book {Maj84,
    AUTHOR = {Majda, A.},
     TITLE = {Compressible fluid flow and systems of conservation laws in
              several space variables},
    SERIES = {Applied Mathematical Sciences},
    VOLUME = {53},
 PUBLISHER = {Springer-Verlag, New York},
      YEAR = {1984},
     PAGES = {viii+159},
      ISBN = {0-387-96037-6},
   MRCLASS = {35L65 (76L05 76N10)},
  MRNUMBER = {748308},
MRREVIEWER = {Joel\ Smoller},
       DOI = {10.1007/978-1-4612-1116-7},
}

@article {JR06,
    AUTHOR = {Jovanovi\'c, Vladimir and Rohde, Christian},
     TITLE = {Error estimates for finite volume approximations of classical
              solutions for nonlinear systems of hyperbolic balance laws},
   JOURNAL = {SIAM J. Numer. Anal.},
  FJOURNAL = {SIAM Journal on Numerical Analysis},
    VOLUME = {43},
      YEAR = {2006},
    NUMBER = {6},
     PAGES = {2423--2449},
      ISSN = {0036-1429,1095-7170},
   MRCLASS = {65M12 (35L60 35L65 65M06)},
  MRNUMBER = {2206442},
MRREVIEWER = {Doron\ Levy},
       DOI = {10.1137/S0036142903438136},
}

@article {Vil94,
    AUTHOR = {Vila, J.-P.},
     TITLE = {Convergence and error estimates in finite volume schemes for
              general multidimensional scalar conservation laws. {I}.
              {E}xplicit monotone schemes},
   JOURNAL = {RAIRO Mod\'el. Math. Anal. Num\'er.},
  FJOURNAL = {RAIRO Mod\'elisation Math\'ematique et Analyse Num\'erique},
    VOLUME = {28},
      YEAR = {1994},
    NUMBER = {3},
     PAGES = {267--295},
      ISSN = {0764-583X},
   MRCLASS = {65M12 (65M15 65M60)},
  MRNUMBER = {1275345},
MRREVIEWER = {R\'emi\ Vaillancourt},
       DOI = {10.1051/m2an/1994280302671},
}

@article {TT99,
    AUTHOR = {Tadmor, Eitan and Tang, T.},
     TITLE = {Pointwise error estimates for scalar conservation laws with
              piecewise smooth solutions},
   JOURNAL = {SIAM J. Numer. Anal.},
  FJOURNAL = {SIAM Journal on Numerical Analysis},
    VOLUME = {36},
      YEAR = {1999},
    NUMBER = {6},
     PAGES = {1739--1758},
      ISSN = {0036-1429,1095-7170},
   MRCLASS = {35L65 (65M15)},
  MRNUMBER = {1712177},
MRREVIEWER = {Chi-Wang\ Shu},
       DOI = {10.1137/S0036142998333997},
}

@article {TZ97,
    AUTHOR = {Teng, Zhen-Huan and Zhang, Pingwen},
     TITLE = {Optimal {$L^1$}-rate of convergence for the viscosity method
              and monotone scheme to piecewise constant solutions with
              shocks},
   JOURNAL = {SIAM J. Numer. Anal.},
  FJOURNAL = {SIAM Journal on Numerical Analysis},
    VOLUME = {34},
      YEAR = {1997},
    NUMBER = {3},
     PAGES = {959--978},
      ISSN = {0036-1429},
   MRCLASS = {65M25 (35L65 76L05 76M25 76N10)},
  MRNUMBER = {1451109},
MRREVIEWER = {Hisashi\ Okamoto},
       DOI = {10.1137/S0036142995268862},
}

@article {TT97,
    AUTHOR = {Tang, T. and Teng, Zhen-Huan},
     TITLE = {Viscosity methods for piecewise smooth solutions to scalar
              conservation laws},
   JOURNAL = {Math. Comp.},
  FJOURNAL = {Mathematics of Computation},
    VOLUME = {66},
      YEAR = {1997},
    NUMBER = {218},
     PAGES = {495--526},
      ISSN = {0025-5718,1088-6842},
   MRCLASS = {65M12 (35D05 35L65 65M06)},
  MRNUMBER = {1397446},
MRREVIEWER = {Darrell\ L.\ Hicks},
       DOI = {10.1090/S0025-5718-97-00822-3},
}

@article {CCL94,
    AUTHOR = {Cockburn, Bernardo and Coquel, Fr\'ed\'eric and LeFloch,
              Philippe},
     TITLE = {An error estimate for finite volume methods for
              multidimensional conservation laws},
   JOURNAL = {Math. Comp.},
  FJOURNAL = {Mathematics of Computation},
    VOLUME = {63},
      YEAR = {1994},
    NUMBER = {207},
     PAGES = {77--103},
      ISSN = {0025-5718,1088-6842},
   MRCLASS = {65M15 (35L65 65M60)},
  MRNUMBER = {1240657},
MRREVIEWER = {Anders\ Szepessy},
       DOI = {10.2307/2153563},
}

@article {Kuz76,
    AUTHOR = {Kuznecov, N. N.},
     TITLE = {The accuracy of certain approximate methods for the
              computation of weak solutions of a first order quasilinear
              equation},
   JOURNAL = {\v Z. Vy\v cisl. Mat i Mat. Fiz.},
  FJOURNAL = {\v Zurnal Vy\v cislitel\cprime no\u i\ Matematiki i Matemati\v
              cesko\u i\ Fiziki},
    VOLUME = {16},
      YEAR = {1976},
    NUMBER = {6},
     PAGES = {1489--1502, 1627},
      ISSN = {0044-4669},
   MRCLASS = {65M10 (35A35)},
  MRNUMBER = {483509},
}

@article {HLS21,
    AUTHOR = {Herbin, R. and Latch\'{e}, J.-C. and Saleh, K.},
     TITLE = {Low {M}ach number limit of some staggered schemes for
              compressible barotropic flows},
   JOURNAL = {Math. Comp.},
  FJOURNAL = {Mathematics of Computation},
    VOLUME = {90},
      YEAR = {2021},
    NUMBER = {329},
     PAGES = {1039--1087},
      ISSN = {0025-5718},
   MRCLASS = {65M06 (35Q31 65M12 76M12 76N06)},
  MRNUMBER = {4232217},
MRREVIEWER = {M. K. Kadalbajoo},
       DOI = {10.1090/mcom/3604},
}

@article {CDV17,
    AUTHOR = {Couderc, F. and Duran, A. and Vila, J.-P.},
     TITLE = {An explicit asymptotic preserving low {F}roude scheme for the
              multilayer shallow water model with density stratification},
   JOURNAL = {J. Comput. Phys.},
  FJOURNAL = {Journal of Computational Physics},
    VOLUME = {343},
      YEAR = {2017},
     PAGES = {235--270},
      ISSN = {0021-9991},
   MRCLASS = {76B15 (65M08 76B70 76M12)},
  MRNUMBER = {3654059},
MRREVIEWER = {Paul Andrew Martin},
       DOI = {10.1016/j.jcp.2017.04.018},
}

@article {PV16,
    AUTHOR = {Parisot, Martin and Vila, Jean-Paul},
     TITLE = {Centered-potential regularization for the advection upstream
              splitting method},
   JOURNAL = {SIAM J. Numer. Anal.},
  FJOURNAL = {SIAM Journal on Numerical Analysis},
    VOLUME = {54},
      YEAR = {2016},
    NUMBER = {5},
     PAGES = {3083--3104},
      ISSN = {0036-1429},
   MRCLASS = {65M08 (35B40 35L60 76E20 76M12 86A05)},
  MRNUMBER = {3556070},
MRREVIEWER = {Dami\'{a}n P. Ginestar},
       DOI = {10.1137/15M1021817},
}

@article {DT11,
    AUTHOR = {Degond, Pierre and Tang, Min},
     TITLE = {All speed scheme for the low {M}ach number limit of the
              isentropic {E}uler equations},
   JOURNAL = {Commun. Comput. Phys.},
  FJOURNAL = {Communications in Computational Physics},
    VOLUME = {10},
      YEAR = {2011},
    NUMBER = {1},
     PAGES = {1--31},
      ISSN = {1815-2406},
   MRCLASS = {76M20 (65M06 76L05 76N15)},
  MRNUMBER = {2775032},
MRREVIEWER = {Thomas H. Sonar},
       DOI = {10.4208/cicp.210709.210610a},
}

@article{AGK23,
author={Arun, K. R.
and Ghorai, Rahuldev
and Kar, Mainak},
title={An Asymptotic Preserving and Energy Stable Scheme for the Barotropic {E}uler System in the Incompressible Limit},
journal={J. Sci. Comput.},
year={2023},
month={Nov},
day={07},
volume={97},
number={3},
pages={73},
issn={1573-7691},
doi={10.1007/s10915-023-02389-x},
}

@article {Kat72,
    AUTHOR = {Kato, Tosio},
     TITLE = {Nonstationary flows of viscous and ideal fluids in {${\bf R}\sp{3}$}},
   JOURNAL = {J. Functional Analysis},
  FJOURNAL = {},
      YEAR = {1972},
     PAGES = {296--305},
   MRCLASS = {35Q10},
  MRNUMBER = {481652},
MRREVIEWER = {R.\ Temam},
       DOI = {10.1016/0022-1236(72)90003-1},
}

@article {AA24,
    AUTHOR = {Arun, Koottungal Revi and Krishnamurthy, Amogh},
     TITLE = {A semi-implicit finite volume scheme for dissipative
              measure-valued solutions to the barotropic {E}uler system},
   JOURNAL = {ESAIM Math. Model. Numer. Anal.},
  FJOURNAL = {ESAIM. Mathematical Modelling and Numerical Analysis},
    VOLUME = {58},
      YEAR = {2024},
    NUMBER = {1},
     PAGES = {47--77},
      ISSN = {2822-7840,2804-7214},
   MRCLASS = {35L45 (35D99 35L60 35L65 35L67 35R06 65M08)},
  MRNUMBER = {4688523},
       DOI = {10.1051/m2an/2023093},
}

@article {KM82,
    AUTHOR = {Klainerman, Sergiu and Majda, Andrew},
     TITLE = {Compressible and incompressible fluids},
   JOURNAL = {Comm. Pure Appl. Math.},
  FJOURNAL = {Communications on Pure and Applied Mathematics},
    VOLUME = {35},
      YEAR = {1982},
    NUMBER = {5},
     PAGES = {629--651},
      ISSN = {0010-3640,1097-0312},
   MRCLASS = {35Q20 (35L60 76N10)},
  MRNUMBER = {668409},
MRREVIEWER = {Charles\ J.\ Amick},
       DOI = {10.1002/cpa.3160350503},
}

@article {BLM+23,
    AUTHOR = {Basari\'{c}, Danica and
              Luk\'{a}\v{c}ov\'{a}-Medvid'ov\'{a}, M\'{a}ria and
              Mizerov\'{a}, Hana and She, Bangwei and Yuan, Yuhuan},
     TITLE = {Error estimates of a finite volume method for the compressible
              {N}avier-{S}tokes-{F}ourier system},
   JOURNAL = {Math. Comp.},
  FJOURNAL = {Mathematics of Computation},
    VOLUME = {92},
      YEAR = {2023},
    NUMBER = {344},
     PAGES = {2543--2574},
      ISSN = {0025-5718,1088-6842},
   MRCLASS = {65M15 (65M08 76N06)},
  MRNUMBER = {4628760},
       DOI = {10.1090/mcom/3852},
}

@article {Jin99,
    AUTHOR = {Jin, Shi},
     TITLE = {Efficient asymptotic-preserving ({AP}) schemes for some
              multiscale kinetic equations},
   JOURNAL = {SIAM J. Sci. Comput.},
  FJOURNAL = {SIAM Journal on Scientific Computing},
    VOLUME = {21},
      YEAR = {1999},
    NUMBER = {2},
     PAGES = {441--454},
      ISSN = {1064-8275,1095-7197},
   MRCLASS = {65M06 (76M20 76P05 82C40)},
  MRNUMBER = {1718639},
       DOI = {10.1137/S1064827598334599},
}

@article{Jin2012, 
title={{Asymptotic preserving (AP) schemes for multiscale kinetic and hyperbolic equations: a 
review}}, 
journal={Riv. Mat. Univ. Parma}, 
pages = {177-216}, 
year = {2012},
author={Shi Jin}, 
}

@article {BAL+14,
    AUTHOR = {Bispen, Georgij and Arun, K. R. and
              Luk\'{a}\v{c}ov\'{a}-Medvid'ov\'{a}, M\'{a}ria and Noelle,
              Sebastian},
     TITLE = {I{MEX} large time step finite volume methods for low {F}roude
              number shallow water flows},
   JOURNAL = {Commun. Comput. Phys.},
  FJOURNAL = {Communications in Computational Physics},
    VOLUME = {16},
      YEAR = {2014},
    NUMBER = {2},
     PAGES = {307--347},
      ISSN = {1815-2406,1991-7120},
   MRCLASS = {65M08 (76D05 86A05)},
  MRNUMBER = {3206264},
MRREVIEWER = {Raul\ Borsche},
       DOI = {10.4208/cicp.040413.160114a},
}

@article {NBA+14,
    AUTHOR = {Noelle, S. and Bispen, G. and Arun, K. R. and
              Luk\'{a}\v{c}ov\'{a}-Medvid'ov\'{a}, M. and Munz, C.-D.},
     TITLE = {A weakly asymptotic preserving low {M}ach number scheme for
              the {E}uler equations of gas dynamics},
   JOURNAL = {SIAM J. Sci. Comput.},
  FJOURNAL = {SIAM Journal on Scientific Computing},
    VOLUME = {36},
      YEAR = {2014},
    NUMBER = {6},
     PAGES = {B989--B1024},
      ISSN = {1064-8275,1095-7197},
   MRCLASS = {65M08 (35L65 35Q31 76M12 76M45 76N15)},
  MRNUMBER = {3293458},
MRREVIEWER = {Martin\ Vohral\'{\i}k},
       DOI = {10.1137/120895627},
}

@article {FLN+18,
    AUTHOR = {Feireisl, Eduard and Luk\'{a}\v{c}ov\'{a}-Medvid'ov\'{a}, M\'{a}ria
              and Ne\v{c}asov\'{a}, {\v{S}}\'{a}rka and Novotn\'{y}, Anton\'in and
              She, Bangwei},
     TITLE = {Asymptotic preserving error estimates for numerical solutions
              of compressible {N}avier-{S}tokes equations in the low {M}ach
              number regime},
   JOURNAL = {Multiscale Model. Simul.},
  FJOURNAL = {Multiscale Modeling \& Simulation. A SIAM Interdisciplinary
              Journal},
    VOLUME = {16},
      YEAR = {2018},
    NUMBER = {1},
     PAGES = {150--183},
      ISSN = {1540-3459,1540-3467},
   MRCLASS = {65M60 (35Q30 35Q35 65M12 65M15 76M10 76M12 76N10)},
  MRNUMBER = {3749377},
MRREVIEWER = {Angelamaria\ Cardone},
       DOI = {10.1137/16M1094233},
}

@article {FLS23,
    AUTHOR = {Feireisl, Eduard and Luk\'{a}\v{c}ov\'{a}-Medvid'ov\'{a}, M\'{a}ria and
              She, Bangwei},
     TITLE = {Improved error estimates for the finite volume and the {MAC}
              schemes for the compressible {N}avier-{S}tokes system},
   JOURNAL = {Numer. Math.},
  FJOURNAL = {Numerische Mathematik},
    VOLUME = {153},
      YEAR = {2023},
    NUMBER = {2-3},
     PAGES = {493--529},
      ISSN = {0029-599X,0945-3245},
   MRCLASS = {65N12 (35Q30 76M12 76M20)},
  MRNUMBER = {4557978},
       DOI = {10.1007/s00211-023-01346-y},
}

@article {LS23,
    AUTHOR = {Luk\'{a}\v{c}ov\'{a}-Medvid'ov\'{a}, M\'{a}ria and Sch\"omer, Andreas},
     TITLE = {Compressible {N}avier-{S}tokes equations with potential
              temperature transport: stability of the strong solution and
              numerical error estimates},
   JOURNAL = {J. Math. Fluid Mech.},
  FJOURNAL = {Journal of Mathematical Fluid Mechanics},
    VOLUME = {25},
      YEAR = {2023},
    NUMBER = {1},
     PAGES = {Paper No. 1, 38},
      ISSN = {1422-6928,1422-6952},
   MRCLASS = {76D05 (35Q30)},
  MRNUMBER = {4510551},
       DOI = {10.1007/s00021-022-00733-z},
}

@article {LSY22,
    AUTHOR = {Luk\'{a}\v{c}ov\'{a}-Medvid'ov\'{a}, M\'{a}ria and She, Bangwei and
              Yuan, Yuhuan},
     TITLE = {Error estimates of the {G}odunov method for the
              multidimensional compressible {E}uler system},
   JOURNAL = {J. Sci. Comput.},
  FJOURNAL = {Journal of Scientific Computing},
    VOLUME = {91},
      YEAR = {2022},
    NUMBER = {3},
     PAGES = {Paper No. 71, 27},
      ISSN = {0885-7474,1573-7691},
   MRCLASS = {65M08 (65M12 65M15 76Nxx)},
  MRNUMBER = {4414396},
MRREVIEWER = {Yibing\ Chen},
       DOI = {10.1007/s10915-022-01843-6},
}

@misc{AKL24,
      title={Asymptotic preserving finite volume method for the compressible {E}uler equations: analysis via dissipative measure-valued solutions}, 
      author={K. R. Arun and Amogh Krishnamurthy and Mária Lukáčová-Medvid'ová},
      year={2024},
      journal={arXiv:2405.05685},
      url={https://arxiv.org/abs/2405.05685}, 
}

@article{ALR25,
    author =  {Anandan, Megala and Lukáčová-Medvid'ová, Mária and {S.V.} {Raghurama Rao}},
    year = 2025,
    title  = {An asymptotic preserving scheme satisfying entropy stability for the barotropic {E}uler system},
    journal = {SeMA J.},
    fjournal = {SeMA Journal},
    issn  = {2281-7875},
    doi  = {10.1007/s40324-025-00395-7},
}

@article{AL25,
    author =  {Anandan, Megala and Lukáčová-Medvid'ová, Mária},
    title  = {Provably fully discrete energy stable and asymptotic preserving scheme for barotropic {E}uler equations},
    year={2025},
    journal = {arXiv:2511.19679},
    doi  = {10.48550/arXiv.2511.19679},
}

@article{GHM+15,
    author = {Gallouët, Thierry and Herbin, Raphaèle and Maltese, David and Novotny, Antonin},
    title = {Error estimates for a numerical approximation to the compressible barotropic {N}avier–{S}tokes equations},
    JOURNAL = {IMA J. Numer. Anal.},
  FJOURNAL = {IMA Journal of Numerical Analysis},
    volume = {36},
    number = {2},
    pages = {543-592},
    year = {2015},
    issn = {0272-4979},
    doi = {10.1093/imanum/drv028},
}

@article{BEHL25,
    author = {Brunk, Aaron and Egger, Herbert and Habrich, Oliver and Lukáčová-Medvid’ová, Maria},
    title = {Error analysis for a second order approximation of a viscoelastic phase separation model},
    journal = {Numer. Math.},
    fjournal = {Numerische Mathematik},
    volume = {157},
    number = {5},
    pages = {1449-1489},
    year = {2025},
    issn = {0945-3245},
    doi = {10.1007/s00211-025-01488-1},
}

@article{PR05,
    author = {Pareschi, Lorenzo and Russo, Giovanni},
    title = {Implicit–Explicit {Runge–Kutta} Schemes and Applications to Hyperbolic Systems with Relaxation},
    journal = {J. Sci. Comput.},
    fjournal = {Journal of Scientific Computing},
    volume = {25},
    number = {1},
    pages = {129-155},
    year = {2005},
    issn = {1573-7691},
    doi = {10.1007/s10915-004-4636-4},
}

@article{BPR13,
author = {Boscarino, S. and Pareschi, L. and Russo, G.},
title = {Implicit-Explicit {Runge-Kutta} Schemes for Hyperbolic Systems and Kinetic Equations in the Diffusion Limit},
journal = {SIAM J. Sci. Comput.},
fjournal = {SIAM Journal on Scientific Computing},
volume = {35},
number = {1},
pages = {A22-A51},
year = {2013},
doi = {10.1137/110842855},
}

@article{DP13,
author = {Dimarco, Giacomo and Pareschi, Lorenzo},
title = {Asymptotic Preserving Implicit-Explicit {Runge-Kutta} Methods for Nonlinear Kinetic Equations},
journal = {SIAM J. Numer. Anal.},
fjournal = {SIAM Journal on Numerical Analysis},
volume = {51},
number = {2},
pages = {1064-1087},
year = {2013},
doi = {10.1137/12087606X},
}

@article{ADP20,
author = {Albi, Giacomo and Dimarco, Giacomo and Pareschi, Lorenzo},
title = {Implicit-Explicit Multistep Methods for Hyperbolic Systems With Multiscale Relaxation},
journal = {SIAM J. Sci. Comput.},
fjournal = {SIAM Journal on Scientific Computing},
volume = {42},
number = {4},
pages = {A2402-A2435},
year = {2020},
doi = {10.1137/19M1303290},
}

@article{BQRX22,
author = {Boscarino, Sebastiano and Qiu, Jingmei and Russo, Giovanni and Xiong, Tao},
title = {High Order Semi-implicit WENO Schemes for All-Mach Full Euler System of Gas Dynamics},
journal = {SIAM J. Sci. Comput.},
fjournal = {SIAM Journal on Scientific Computing},
volume = {44},
number = {2},
pages = {B368-B394},
year = {2022},
doi = {10.1137/21M1424433},
}
\end{document}